\theoremstyle{plain}
\newtheorem{theorem}{Theorem}
\newtheorem{prop}[theorem]{Proposition}
\newtheorem{cor}[theorem]{Corollary}
\newtheorem{lemma}[theorem]{Lemma}
\theoremstyle{definition}
\newtheorem{definition}{Definition}
\theoremstyle{remark}
\newtheorem{rmk}{Remark}
\newtheorem{example}{Example}
\DeclareMathOperator{\spn}{\mathrm{span}}
\DeclareMathOperator{\rank}{\mathrm{rank}}
\DeclareMathOperator{\tr}{\mathrm{Trace}}
\DeclareMathOperator{\diam}{diam}
\newcommand{\R}{\mathbb{R}}					
\newcommand{\C}{\mathbb{C}}					
\renewcommand{\H}{\mathbb{H}}				
\newcommand{\distr}{\mathcal{D}}			
\newcommand{\ver}{\mathcal{V}}				
\renewcommand{\theta}{\vartheta}			
\renewcommand{\epsilon}{\varepsilon}		
\newcommand{\euler}{\mathfrak{e}}			
\newcommand{\vis}{\text{\tiny\HalfSun}}		
\newcommand{\liousurf}{{\mu}}
\newcommand{\dive}{\mathrm{div}}			
\renewcommand{\d}{\mathsf{d}}				
\title[A sub-Riemannian Santal\'o formula]{A sub-Riemannian Santal\'o formula with applications to isoperimetric inequalities and first Dirichlet eigenvalue of hypoelliptic operators}
\author{Dario Prandi$^1$}
\address{$^1$CNRS, L2S, CentraleSup\'elec, Gif-sur-Yvette, France.}
\email{\href{mailto:dario.prandi@l2s.centralesupelec.fr}{dario.prandi@l2s.centralesupelec.fr}}
\author{Luca Rizzi$^{2,3}$}
\address{$^2$Univ. Grenoble Alpes, CNRS, Institut Fourier, F-38000 Grenoble, France}
\address{$^3$Inria, team GECO \& CMAP, \'Ecole Polytechnique, CNRS, Universit\'e Paris-Saclay, Palaiseau, France (past institution)}
\email{\href{mailto:luca.rizzi@univ-grenoble-alpes.fr}{luca.rizzi@univ-grenoble-alpes.fr}}
\author{Marcello Seri$^4$}
\address{$^4$Department of Mathematics and Statistics, University of Reading, Reading, UK}
\email{\href{mailto:m.seri@ucl.ac.uk}{m.seri@ucl.ac.uk}}
\subjclass[2010]{53C17, 53C65, 35P15, 57R30, 35R03, 53C65}
\begin{document}

\begin{abstract}
In this paper we prove a sub-Riemannian version of the classical Santal\'o formula: a result in integral geometry that describes the intrinsic Liouville measure on the unit cotangent bundle in terms of the geodesic flow. Our construction works under quite general assumptions, satisfied by any sub-Riemannian structure associated with a Riemannian foliation with totally geodesic leaves (e.g.\ CR and QC manifolds with symmetries), any Carnot group, and some non-equiregular structures such as the Martinet one. A key ingredient is a ``reduction procedure'' that allows to consider only a simple subset of sub-Riemann\-ian geodesics.

As an application, we derive isoperimetric-type and ($p$-)Hardy-type inequalities for a compact domain $M$ with piecewise $C^{1,1}$ boundary, and a universal lower bound for the first Dirichlet eigenvalue $\lambda_1(M)$ of the sub-Laplacian,
\begin{equation}
\lambda_1(M)  \geq \frac{k \pi^2}{L^2},
\end{equation}
in terms of the rank $k$ of the distribution and the length $L$ of the longest reduced sub-Riemannian geodesic contained in $M$. All our results are sharp for the sub-Riemannian structures on the hemispheres of the complex and quaternionic Hopf fibrations:
\begin{equation}
\mathbb{S}^1\hookrightarrow \mathbb{S}^{2d+1} \xrightarrow{p} \mathbb{CP}^d, \qquad \mathbb{S}^3\hookrightarrow \mathbb{S}^{4d+3} \xrightarrow{p} \mathbb{HP}^d, \qquad d \geq 1,
\end{equation}
where the sub-Laplacian is the standard hypoelliptic operator of CR and QC geometries, $L = \pi$ and $k=2d$ or $4d$, respectively. 
\end{abstract}

\maketitle

\section{Introduction and results}

Let $(M,g)$ be a compact connected Riemannian manifold with boundary $\partial M$.
Santal\'o formula \cite{chavelbook,santalobook} is a classical result in integral geometry that describes the Liouville measure $\mu$ of the unit tangent bundle $UM$ in terms of the geodesic flow $\phi_t:UM\to UM$. Namely, for any measurable function $F:UM\to \R$ we have
\begin{equation}
	\label{eq:santalo-intro}
	\int_{U^\vis M} F \,\liousurf = \int_{\partial M} \left[\int_{U^+_q\partial M}  \left( \int_0^{\ell(v)}  F ( \phi_t(v))  dt\right)  g(v,{\mathbf{n}_q}) \eta_q(v)\right] \sigma(q),
\end{equation}
where $\sigma$ is the surface form on $\partial M$ induced by the inward pointing normal vector $\mathbf n$, $\eta_q$ is the Riemannian spherical measure on $U_qM$, $U^+_q\partial M$ is the set of inward pointing unit vectors at $q\in \partial M$ and $\ell(v)$ is the exit length of the geodesic with initial vector $v$.
Finally, $U^\vis M\subseteq UM$ is the visible set, i.e.\ the set of unit vectors that can be reached via the geodesic flow starting from points on $\partial M$.

In the Riemannian setting,~\eqref{eq:santalo-intro} allows to deduce some very general isoperimetric inequalities and Dirichlet eigenvalues estimates for the Laplace-Beltrami operator as showed by Croke in the celebrated papers \cite{croke-iso, croke-iso-4D, croke-lower}.

The extension of~\eqref{eq:santalo-intro} to the sub-Riemannian setting and its consequences are not straightforward for a number of reasons. Firstly, in sub-Riemannian geometry the geodesic flow is replaced by a degenerate Hamiltonian flow on the cotangent bundle. Moreover, the unit cotangent bundle (the set of covectors with unit norm) is not compact, but rather has the topology of an infinite cylinder. Finally, in sub-Riemannian geometry there is not a clear agreement on which is the ``canonical'' volume, generalizing the Riemannian measure. Another aspect to consider is the presence of characteristic points on the boundary.

In this paper we extend~\eqref{eq:santalo-intro} to the most general class of sub-Riemann\-ian structures for which Santal\'o formula makes sense. As an application we deduce Hardy-like inequalities, sharp universal estimates on the first Dirichlet eigenvalue of the sub-Laplacian and sharp isoperimetric-type inequalities.

To our best knowledge, a sub-Riemann\-ian version of \eqref{eq:santalo-intro} appeared only in~\cite{pansu} for the three-dimensional Heisenberg group, and more recently in \cite{montefalcone} for Carnot groups, where the natural global coordinates allow for explicit computations. As far as other sub-Riemannian structures are concerned, Santal\'o formula is an unexplored technique with potential applications to different settings, including CR (Cauchy-Rie\-mann) and QC (quaternionic contact) geometry, Riemannian foliations, and Carnot groups. 

\subsection{Setting and examples}
Let $(N,\distr,g)$ be a sub-Riemannian manifold of dimension $n$, where $\distr\subseteq TN$ is a distribution that satisfies the bracket-generating condition and $g$ is a smooth metric on $\distr$. Smooth sections $X \in \Gamma(\distr)$ are called \emph{horizontal}. We consider a compact $n$-dimensional submanifold $M \subset N$ with boundary $\partial M \neq \emptyset$. 

If $(N,\distr,g)$ is Riemannian, we equip it with its Riemannian volume $\omega_R$. In the genuinely sub-Riemannian case we fix any smooth volume form $\omega$ on $M$ (or a density if $M$ is not orientable). In any case, the surface measure $\sigma = \iota_{\mathbf{n}} \omega$ on $\partial M$ is given by the contraction with the horizontal unit normal $\mathbf{n}$ to $\partial M$. For what concerns the regularity of the boundary, we assume only that $\partial M$ is piecewise $C^{1,1}$. (See Remark~\ref{rmk:noH0} for the Lipschitz case.)

A central role is played by sub-Riemannian geodesics, i.e., curves tangent to $\distr$ that locally minimize the sub-Riemannian distance between endpoints. In this setting, the geodesic flow\footnote{Abnormal geodesics are allowed, but strictly abnormal ones, not given by the Hamiltonian flow on the cotangent bundle, do not play any role in our  construction.} is a natural Hamiltonian flow $\phi_t:T^*M \to T^*M$ on the cotangent bundle, induced by the Hamiltonian function $H \in C^\infty(T^*M)$. The latter is a non-negative, degenerate, quadratic form on the fibers of $T^*M$ that contains all the information on the sub-Riemannian structure. Length-parametrized geodesics are characterized by an initial covector $\lambda$ in the unit cotangent bundle $U^*M = \{\lambda \in T^*M \mid 2H(\lambda) =1 \}$. 

A key ingredient for most of our results is the following \emph{reduction procedure}. Fix a transverse sub-bundle $\ver\subset TM$ such that $TM=\distr\oplus \ver$. We define the reduced cotangent bundle $T^*M^\mathsf{r}$ as the set of covectors annihilating $\ver$. On $T^*M^\mathsf{r}$ we define a reduced Liouville volume $\Theta^\mathsf{r}$, which depends on the choice of the volume $\omega$ on $M$. These must satisfy the following stability hypotheses:
\begin{itemize}
\item[(\textbf{H1})] The bundle $T^*M^\mathsf{r}$ is invariant under the Hamiltonian flow $\phi_t$;
\item[(\textbf{H2})] The reduced Liouville volume is invariant, i.e. $\mathcal{L}_{\vec{H}}\Theta^\mathsf{r} = 0$.
\end{itemize}
This allows to replace the non-compact $U^*M$ with a compact slice $U^*M^\mathsf{r}:= U^*M \cap T^*M^\mathsf{r}$, equipped with an invariant measure (see Section~\ref{sec:redsant}). These hypotheses are verified for:
\begin{itemize}
\item any Riemannian structure, equipped with the Riemannian volume;
\item any sub-Riemannian structure associated with a Riemannian foliation with totally geodesic leaves, equipped with the Riemannian volume. These includes contact, CR, QC structures with transverse symmetries, and also some non-equiregular structures as the Martinet one on $\R^3$. See Section~\ref{s:foliations};
\item any left-invariant sub-Riemannian structure on a Carnot group\footnote{We stress that Carnot groups are not Riemannian foliations if their step is $> 2$.}, equipped with the Haar volume, see Section~\ref{s:Carnot}.
\end{itemize}

An interesting example, coming from CR geometry, is the complex Hopf fibration (CHF)
\begin{equation}
\mathbb{S}^1\hookrightarrow \mathbb{S}^{2d+1} \xrightarrow{p} \mathbb{CP}^d, \qquad d \geq 1,
\end{equation}
where $\distr := (\ker p_*)^\perp$ is the orthogonal complement of the kernel of the differential of the Hopf map w.r.t.\ the round metric on $\mathbb{S}^{2d+1}$, and the sub-Riemannian metric $g$ is the restriction to $\distr$ of the round one. Another interesting structure, coming from QC geometry and with corank $3$, is the quaternionic Hopf fibration (QHF)
\begin{equation}
\mathbb{S}^3\hookrightarrow \mathbb{S}^{4d+3} \xrightarrow{p} \mathbb{HP}^d, \qquad d \geq 1,
\end{equation}
where $\mathbb{HP}^d$ is the quaternionic projective space of real dimension $4d$ and the sub-Riemann\-ian structure on $\mathbb{S}^{4d+3}$ is defined similarly to its complex version.

\subsection{Sub-Riemannian Santal\'o formulas}

Consider a sub-Riemann\-ian geodesic $\gamma(t)$ with initial covector $\lambda \in U^*M$. The \emph{exit length} $\ell(\lambda) \in [0,+\infty)$ is the length after which $\gamma$ leaves $M$ by crossing $\partial M$. Similarly, $\tilde{\ell}(\lambda)$ is the minimum between $\ell(\lambda)$ and the cut length $c(\lambda)$. That is, after length $\tilde{\ell}(\lambda)$ the geodesic either loses optimality or leaves $M$.

\begin{figure}
\includegraphics[scale=1]{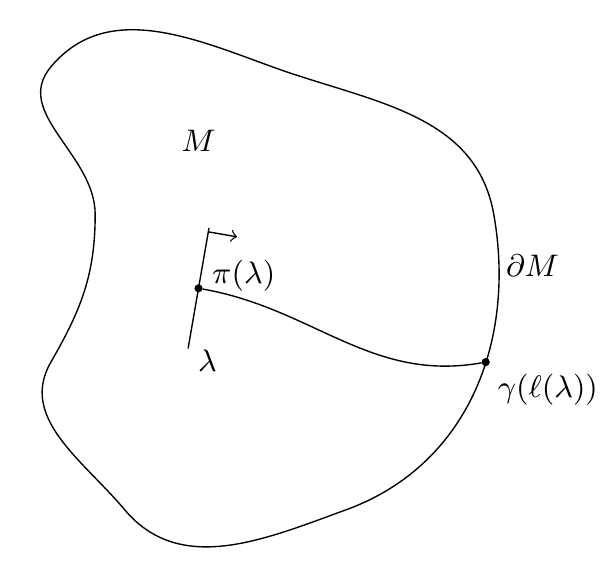}
\hfill
\includegraphics[scale=1]{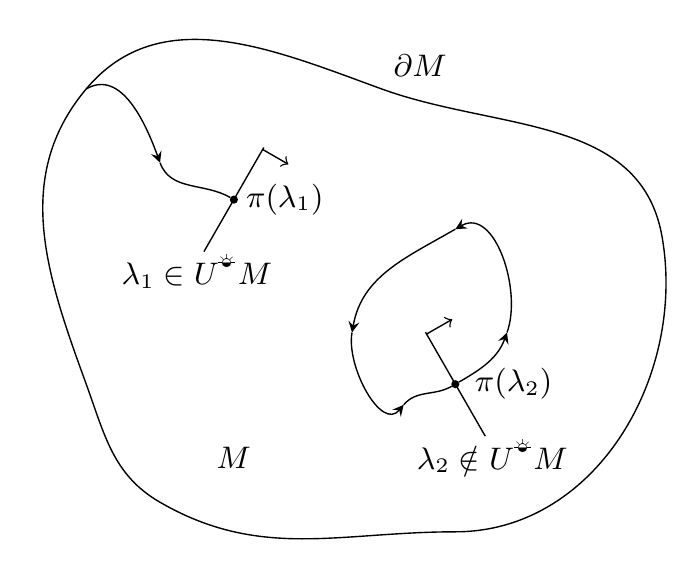}
\caption{Exit length (left) and visible set (right). Covectors are represented as hyperplanes, the arrow shows the direction of propagation of the associated geodesic for positive time.}\label{f:fig1}
\end{figure}

The \emph{visible unit cotangent bundle} $U^\vis M \subset U^*M$ is the set of unit covectors $\lambda$ such that $\ell(-\lambda) < +\infty$. (See Fig.~\ref{f:fig1}.) Analogously, the \emph{optimally visible unit cotangent bundle} $\tilde U^\vis M$ is the set of unit covectors such that $\tilde{\ell}(-\lambda) < +\infty$.

For any non-characteristic point $q \in \partial M$, we have a well defined inner pointing unit horizontal vector $\mathbf{n}_q \in \distr_q$, and $U^+_q\partial M \subset U_q^*M$ is the set of initial covectors of geodesics that, for positive time, are directed toward the interior of $M$.

As anticipated, we do not consider all the length-para\-met\-rized ge\-odesics, i.e. all initial covectors $\lambda \in U_q^* M \simeq \mathbb{S}^{k-1} \times \R^{n-k}$, but a \emph{reduced subset} $U_q^* M^\mathsf{r} \simeq \mathbb{S}^{k-1}$. In the following the suffix $\mathsf{r}$ always denotes the intersection with the reduced unit cotangent bundle $U^*M^\mathsf{r}$. We stress the critical fact that $U^*M^\mathsf{r}$ is compact, while $U^*M$ never is, except in the Riemannian setting where the reduction procedure is trivial. With these basic definitions at hand, we are ready to state the sub-Riemannian Santal\'o formulas.

\begin{theorem}[Reduced Santal\'o formulas]
	\label{t:santalo-reduced-intro}
	The visible set $U^\vis M^\mathsf{r}$ and the optimally visible set $\tilde{U}^\vis M^\mathsf{r}$ are measurable. For any measurable function $F:U^*M^\mathsf{r} \to \R$ we have
	\begin{align}
		\label{eq:santalo-reduced-intro}
		\int_{U^\vis M^\mathsf{r}} F \,\liousurf^\mathsf{r}
			& = \int_{\partial M} \left[\int_{U_q^+\partial M^\mathsf{r}}  \left( \int_0^{\ell(\lambda)}  F ( \phi_t(\lambda))  dt\right)  \langle \lambda,{\mathbf{n}_q}\rangle \eta^\mathsf{r}_q(\lambda)\right] \sigma(q), \\
		\label{eq:santalo-tilde-reduced-intro} 
		\int_{\tilde{U}^\vis M^\mathsf{r}} F \,\liousurf^\mathsf{r}
			& = \int_{\partial M} \left[\int_{U_q^+\partial M^\mathsf{r}}  \left( \int_0^{\tilde{\ell}(\lambda)}  F ( \phi_t(\lambda))  dt\right)  \langle \lambda,{\mathbf{n}_q}\rangle \eta^\mathsf{r}_q(\lambda)\right] \sigma(q).
\end{align}
\end{theorem}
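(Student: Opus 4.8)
The plan is to prove both identities by a single \emph{cross-section} (or \emph{unfolding}) argument for the Hamiltonian flow $\phi_t$ on the \emph{compact} reduced unit bundle $U^*M^\mathsf{r}$, which is the reduced counterpart of the classical derivation of \eqref{eq:santalo-intro}. The two stability hypotheses do the essential structural work: (\textbf{H1}) guarantees that the whole argument can be run on $T^*M^\mathsf{r}$, so that launching reduced covectors from the boundary and flowing stays in the reduced bundle; (\textbf{H2}) guarantees that the reduced Liouville volume $\Theta^\mathsf{r}$ is $\phi_t$-invariant, which is exactly what lets the $\d t$-integral be factored out. First I would dispatch measurability: the exit function $\ell$ and the cut function $c$ on $U^*M^\mathsf{r}$ are measurable (the exit time is lower semicontinuous and the cut time upper semicontinuous away from characteristic launches, by continuity of $\phi_t$ and of a boundary defining function), so that $U^\vis M^\mathsf{r}=\{\lambda\mid \ell(-\lambda)<+\infty\}$ and its optimal analogue defined via $\tilde\ell=\min(\ell,c)$ are measurable. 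The characteristic set of $\partial M$, where $\mathbf n_q$ and the pairing $\langle\lambda,\mathbf n_q\rangle$ degenerate, has $\sigma$-measure zero since $\partial M$ is piecewise $C^{1,1}$, so it can be discarded throughout.

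Next I would set up the unfolding map. On the domain
\[
	\mathcal A=\{(q,\lambda,t)\mid q\in\partial M \text{ non-char.},\ \lambda\in U_q^+\partial M^\mathsf{r},\ 0\le t<\ell(\lambda)\},
\]
define $\Psi(q,\lambda,t)=\phi_t(\lambda)$. By (\textbf{H1}) the image lies in $U^*M^\mathsf{r}$, and by construction $\Psi$ surjects onto $U^\vis M^\mathsf{r}$ up to a null set. I would check that $\Psi$ is an almost-everywhere bijection: a visible covector $\mu$ is traced backward by $\phi_{-t}$ until its first crossing of $\partial M$, which selects a unique inward reduced launch $(q,\lambda)$ and a unique $t\in[0,\ell(\lambda))$; before that first exit the geodesic cannot have crossed $\partial M$ inward at an earlier time, so injectivity holds, and re-entrant covectors are simply accounted for by their own immediate past crossing.

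The heart of the proof is a Jacobian lemma for $\Psi$, namely the reduced Liouville identity along the inflow hypersurface $U^+\partial M^\mathsf{r}\subset U^*M^\mathsf{r}$:
\[
	\Theta^\mathsf{r}=\d t\wedge\big(\langle\lambda,\mathbf n_q\rangle\,\eta_q^\mathsf{r}(\lambda)\wedge\sigma(q)\big),
	\qquad\text{equivalently}\qquad
	\iota_{\vec H}\Theta^\mathsf{r}\big|_{U^+\partial M^\mathsf{r}}=\langle\lambda,\mathbf n_q\rangle\,\eta_q^\mathsf{r}(\lambda)\wedge\sigma(q).
\]
This is where the transversality factor $\langle\lambda,\mathbf n_q\rangle$ enters, measuring the pairing of the Hamiltonian direction $\vec H$ against the boundary; since $\mathbf n_q\in\distr_q$ is \emph{horizontal}, this pairing is the correct reduced replacement for the Riemannian cosine $g(v,\mathbf n_q)$. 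I would prove it by choosing coordinates adapted simultaneously to the splitting $TM=\distr\oplus\ver$ and to $\partial M$, writing $\Theta^\mathsf{r}$ in terms of the chosen volume $\omega$ and the spherical measure $\eta_q^\mathsf{r}$ on the reduced fiber $U_q^*M^\mathsf{r}\simeq\mathbb S^{k-1}$, and then computing the contraction $\iota_{\vec H}\Theta^\mathsf{r}$. I expect this local computation — in particular verifying that $\eta_q^\mathsf{r}$ built from $\omega$ and the reduced fiber sphere is exactly the factor that appears, and controlling it uniformly up to the measure-zero characteristic set on a merely piecewise $C^{1,1}$ boundary — to be the main obstacle.

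Finally I would assemble. Using (\textbf{H2}), the pullback $\Psi^*\Theta^\mathsf{r}$ is constant in the $t$-direction, so the lemma gives $\Psi^*\liousurf^\mathsf{r}=\langle\lambda,\mathbf n_q\rangle\,\d t\,\eta_q^\mathsf{r}(\lambda)\,\sigma(q)$ on $\mathcal A$. The change of variables along $\Psi$ followed by Fubini in $(q,\lambda,t)$ yields \eqref{eq:santalo-reduced-intro} verbatim. For \eqref{eq:santalo-tilde-reduced-intro} I would rerun the identical argument on the smaller domain $\{0\le t<\tilde\ell(\lambda)\}$: cutting at $\tilde\ell=\min(\ell,c)$ keeps every geodesic on its minimizing segment, which both secures injectivity of $\Psi$ onto $\tilde U^\vis M^\mathsf{r}$ and leaves the integrand unchanged, so the optimally visible formula follows along the same lines.
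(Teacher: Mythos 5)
Your proposal is correct and follows essentially the same route as the paper: an unfolding map $(t,\lambda)\mapsto\phi_t(\lambda)$ from $\{(t,\lambda)\mid 0<t<\ell(\lambda)\}\subset\R\times U^+\partial M^\mathsf{r}$ onto $U^\vis M^\mathsf{r}$ minus a null set, semicontinuity of $\ell$ and $c$ for measurability, a contraction identity along the inflow boundary producing the factor $\langle\lambda,\mathbf{n}_q\rangle$, invariance of the reduced measure under the flow, and Fubini. The only slip is notational: the Jacobian identity must be stated for the reduced Liouville \emph{surface} form $\liousurf^\mathsf{r}=\iota_{\euler}\Theta^\mathsf{r}$ on $U^*M^\mathsf{r}$ (an $(n+k-1)$-form, whose invariance follows from (\textbf{H2}) together with $[\vec H,\euler]=-\vec H$ being tangent to $U^*M^\mathsf{r}$), not for the $(n+k)$-form $\Theta^\mathsf{r}$ itself, since otherwise the degrees on the two sides do not match; your final assembly already uses $\liousurf^\mathsf{r}$ correctly.
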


In~\eqref{eq:santalo-reduced-intro}-\eqref{eq:santalo-tilde-reduced-intro}, $\mu^\mathsf{r}$ is a reduced invariant Liouville measure on $U^*M^\mathsf{r}$, $\eta_q^\mathsf{r}$ is an appropriate smooth measure on the fibers $U_q^*M^\mathsf{r}$ and $\langle\lambda,\cdot\rangle$ denotes the action of covectors on vectors. Indeed both include the Riemannian case, where the reduction procedure is trivial and $U^*M \simeq UM$ since the Hamiltonian is not degenerate.

\begin{rmk}
Hypotheses (\textbf{H1}) and (\textbf{H2}) are essential for the reduction procedure. An unreduced version of Theorem~\ref{t:santalo-reduced-intro} holds for any volume $\omega$ and with no other assumptions but the Lipschitz regularity of $\partial M$ (see Theorem~\ref{t:santalo} and Remark~\ref{rmk:noH0}).  However, the consequences we present do not hold a priori, as their proofs rely on the summability of certain functions on $U^*M^\mathsf{r}$, generally false on the non-compact $U^*M$.
\end{rmk}

\subsection{Hardy-type inequalities}
\label{sec:hardy-type-intro}

For any $f \in C^\infty(M)$, let $\nabla_H f \in \Gamma(\distr)$ be the \emph{horizontal gradient}: the horizontal direction of steepest increase of $f$. It is defined via the identity
\begin{equation}\label{eq:horgrad}
g(\nabla_H f, X) = df(X), \qquad \forall X \in \Gamma(\distr).
\end{equation}

Consider all length-parametrized sub-Riemannian geodesic passing through a point $q \in M$, with covector $\lambda \in U^*_qM$. Set $L(\lambda) := \ell(\lambda) +\ell(-\lambda)$; this is the length of the maximal geodesic that passes through $q$ with covector $\lambda$.

\begin{prop}[Hardy-like inequalities]\label{p:hardylike-intro}
For any $f\in C_0^\infty(M)$ it holds
\begin{align}
\label{eq:hardy1-intro}  \int_M |\nabla_H f|^2 \omega &\ge
    \frac{k\pi^2}{| \mathbb{S}^{k-1}|} \int_{M} \frac{f^2}{R^2} \omega, \\
\label{eq:hardy2-intro}  \int_M |\nabla_H f|^2 \omega &\ge
    \frac{k}{4| \mathbb{S}^{k-1}|} \int_{M} \frac{f^2}{r^2} \omega,
\end{align}
where $k = \rank \distr$ and $r,R: M \to \R$ are:
\begin{equation}
\frac{1}{R^2(q)}  := \int_{U_q^* M^\mathsf{r}}\frac{1}{L^2}\eta^\mathsf{r}_q, \qquad \frac{1}{r^2(q)}  := \int_{U_q^* M^\mathsf{r}}\frac{1}{\ell^2}\eta^\mathsf{r}_q, \qquad \forall q \in M.
\end{equation}
\end{prop}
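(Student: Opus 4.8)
The plan is to reduce both inequalities to two elementary one–dimensional estimates along geodesics and then to lift them to $M$ by the reduced Santal\'o formula of Theorem~\ref{t:santalo-reduced-intro}. The 1D facts are the Poincar\'e--Wirtinger inequality $\int_0^\ell \dot u^2\,dt \ge \tfrac{\pi^2}{\ell^2}\int_0^\ell u^2\,dt$ for $u$ with $u(0)=u(\ell)=0$, and the Hardy inequality $\int_0^\ell \dot u^2\,dt \ge \tfrac14\int_0^\ell \tfrac{u^2}{t^2}\,dt$ for $u$ with $u(0)=0$, whose constants $\pi^2$ and $1/4$ are sharp. First I would fix the test object on the reduced unit cotangent bundle: for $\lambda\in U^*M^\mathsf{r}$ let $v_\lambda\in\distr$ be the horizontal velocity of the associated geodesic (the unit vector $g$-dual to $\lambda|_\distr$), and set $F(\lambda):=g\bigl(\nabla_H f(\pi(\lambda)),v_\lambda\bigr)^2$, where $\pi\colon U^*M^\mathsf{r}\to M$ is the projection. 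Writing $\gamma_\lambda(t)=\pi(\phi_t(\lambda))$ and $u(t):=f(\gamma_\lambda(t))$, the curve $\gamma_\lambda$ is horizontal and length-parametrized, so $\dot u(t)=df(\dot\gamma_\lambda)=g(\nabla_H f,v_{\phi_t\lambda})$ and hence \emph{exactly} $\dot u(t)^2=F(\phi_t(\lambda))$. Since $f\in C_0^\infty(M)$, $u$ vanishes whenever $\gamma_\lambda$ meets $\partial M$, so the 1D inequalities apply on each maximal chord.

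Next I would run the Santal\'o formula with this $F$ in both directions. Applying \eqref{eq:santalo-reduced-intro} rewrites $\int_{U^\vis M^\mathsf{r}}F\,\mu^\mathsf{r}$ as the boundary integral whose inner term is $\int_0^{\ell(\lambda)}\dot u^2\,dt$ for geodesics issued from $\partial M$. For such $\lambda\in U_q^+\partial M^\mathsf{r}$ one has $\ell(-\lambda)=0$, so the chord has constant length $\ell(\lambda)=L(\phi_t\lambda)$, while the backward length from the time-$t$ point is $t=\ell(-\phi_t\lambda)$. For the first inequality I apply Wirtinger on $[0,\ell(\lambda)]$ to bound the inner term below by $\pi^2\int_0^{\ell(\lambda)} G(\phi_t\lambda)\,dt$ with $G(\mu)=f(\pi\mu)^2/L(\mu)^2$; for the second I apply Hardy measured from the starting endpoint, giving the lower bound $\tfrac14\int_0^{\ell(\lambda)} G(\phi_t\lambda)\,dt$ with $G(\mu)=f(\pi\mu)^2/\ell(-\mu)^2$. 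Reading \eqref{eq:santalo-reduced-intro} backwards identifies these boundary integrals with $\int_{U^\vis M^\mathsf{r}}G\,\mu^\mathsf{r}$.

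It remains to evaluate both sides against the fibre measure, using the disintegration $\mu^\mathsf{r}=\int_M \eta^\mathsf{r}_q\,\omega(q)$ and the identification $U_q^*M^\mathsf{r}\simeq\mathbb{S}^{k-1}$ with $\eta^\mathsf{r}_q$ the round measure of total mass $|\mathbb{S}^{k-1}|$. On the left, the averaging identity $\int_{U_q^*M^\mathsf{r}}g(w,v_\lambda)^2\,\eta^\mathsf{r}_q(\lambda)=\tfrac{|\mathbb{S}^{k-1}|}{k}|w|^2$ gives $\int_{U^\vis M^\mathsf{r}}F\,\mu^\mathsf{r}\le\int_{U^*M^\mathsf{r}}F\,\mu^\mathsf{r}=\tfrac{|\mathbb{S}^{k-1}|}{k}\int_M|\nabla_H f|^2\,\omega$. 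On the right, the crucial observation is that $G$ vanishes on the non-visible set $U^*M^\mathsf{r}\setminus U^\vis M^\mathsf{r}$: there $\ell(-\mu)=\infty$, hence $L(\mu)=\infty$ as well, so $\int_{U^\vis M^\mathsf{r}}G\,\mu^\mathsf{r}=\int_{U^*M^\mathsf{r}}G\,\mu^\mathsf{r}$. For the first inequality the fibre integral of $1/L^2$ is $1/R^2(q)$ directly; for the second, the antipodal involution $\mu\mapsto-\mu$, which preserves $\eta^\mathsf{r}_q$ and exchanges $\ell(-\mu)$ with $\ell(\mu)$, turns the fibre integral of $1/\ell(-\mu)^2$ into $1/r^2(q)$. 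Chaining the (in)equalities yields $\tfrac{|\mathbb{S}^{k-1}|}{k}\int_M|\nabla_H f|^2\omega\ge\pi^2\int_M f^2/R^2\,\omega$ and the analogue with $\tfrac14$ and $r$, which are exactly \eqref{eq:hardy1-intro} and \eqref{eq:hardy2-intro}.

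The hard part will be precisely this last bookkeeping between the visible cotangent bundle produced by Santal\'o and the full fibres $U_q^*M^\mathsf{r}$ entering the definitions of $r$ and $R$: the argument works only because the weights $1/L^2$ and $1/\ell(-\mu)^2$ are supported on the visible set, so no separate ``almost every geodesic exits'' hypothesis is needed; the factor $k$ is recovered solely from the spherical averaging, reflecting that each geodesic detects only the component of $\nabla_H f$ along $v_\lambda$. A secondary but essential point is the summability of the right-hand sides, so that the manipulations are genuine and not of the form $\infty\ge\infty$; this is exactly where compactness of $U^*M^\mathsf{r}$---the reason for the reduction procedure---is used, together with the fact that $f$ is compactly supported away from the boundary and characteristic regions where $\ell$ and $L$ degenerate.
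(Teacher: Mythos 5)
Your proposal is correct and follows essentially the same route as the paper's proof: the fibrewise averaging identity $\int_{U_q^*M^{\mathsf r}}\langle\lambda,\nabla_H f\rangle^2\,\eta_q^{\mathsf r}=\tfrac{|\mathbb S^{k-1}|}{k}|\nabla_H f|^2$, the reduced Santal\'o formula applied to $F(\lambda)=\langle\lambda,\nabla_H f\rangle^2$, the one-dimensional Poincar\'e and Hardy inequalities along each chord, and a second application of Santal\'o to return to an integral over the fibres. The only divergence is in the Hardy step, where the paper bounds $\min\{t,\ell(\lambda)-t\}$ by the forward remaining length $\ell(\phi_t\lambda)=\ell(\lambda)-t$, while you use the elapsed length $t=\ell(-\phi_t\lambda)$ and then the antipodal symmetry of $\eta_q^{\mathsf r}$ to recover $1/r^2$; this variant is, if anything, slightly cleaner, since the weight $1/\ell(-\mu)^2$ manifestly vanishes off the visible set, making the passage from $U^{\vis}M^{\mathsf r}$ to the full fibre $U_q^*M^{\mathsf r}$ completely explicit.
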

We observe that $r$ is the \emph{harmonic mean distance from the boundary} defined in \cite{Davies-review}. One can also consider the following generalization of Proposition \ref{p:hardylike-intro} for $L^p(M,\omega)$ norms.

\begin{prop}[$p$-Hardy-like inequality] \label{p:p-hardy-intro}
    Let $p>1$ and $f\in C_0^\infty(M)$. Then
 \begin{align}
\int_M |\nabla_H f|^p \omega &\ge
    \pi_p^p \, C_{p,k} \int_{M} \frac{|f|^p}{R^p} \omega,  \label{eq:p-hardy1-intro}\\
\int_M |\nabla_H f|^p \omega &\ge
    \left(\frac{p-1}{p}\right)^p C_{p,k} \int_{M} \frac{|f|^p}{r^p} \omega, \label{eq:p-hardy2-intro}
\end{align}
where $k = \rank \distr$, the constants $\pi_p$ and $C_{p,k}$ are
\begin{equation}
\pi_p = \frac{2\pi (p-1)^{1/p}}{p \, \sin(\pi/p)}, \qquad C_{p,k} = \frac{k}{|\mathbb{S}^{k-1}|} \frac{\sqrt{\pi}\,\Gamma(\tfrac{k+p}{2})}{2\Gamma(\tfrac{1+p}{2})\Gamma(\tfrac{k}{2}+1)},
\end{equation}
and $r^p,R^p: M \to \R$ are
\begin{equation}
\frac{1}{R^p(q)}  := \int_{U_q^* M^\mathsf{r}}\frac{1}{L^p}\eta^\mathsf{r}_q, \qquad \frac{1}{r^p(q)}  := \int_{U_q^* M^\mathsf{r}}\frac{1}{\ell^p}\eta^\mathsf{r}_q, \qquad \forall q \in M.
\end{equation}
\end{prop}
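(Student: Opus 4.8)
The plan is to run the same scheme that proves the quadratic case (Proposition~\ref{p:hardylike-intro}), replacing the one-dimensional $L^2$ estimates by their $L^p$ analogues and recomputing the spherical constant. Three ingredients enter. The first is a pointwise \emph{spherical-averaging identity}. Fix $q\in M$, identify $U_q^*M^{\mathsf r}\simeq\mathbb{S}^{k-1}$ with its round measure $\eta_q^{\mathsf r}$ (the normalization for which the total mass is $|\mathbb{S}^{k-1}|$, and the only one compatible with Proposition~\ref{p:hardylike-intro}), and view $\nabla_H f(q)$ as a vector in $\distr_q\simeq\R^k$. The elementary moment computation
\[
\int_{\mathbb{S}^{k-1}}|\langle v,\theta\rangle|^p\,\eta_q^{\mathsf r}(\theta)=\frac{|v|^p}{C_{p,k}},\qquad \frac1{C_{p,k}}=\int_{\mathbb{S}^{k-1}}|\theta_1|^p\,\eta_q^{\mathsf r},
\]
reduces to a Beta integral and one checks it equals the reciprocal of the stated $C_{p,k}$ (at $p=2$ it gives $C_{2,k}=k/|\mathbb{S}^{k-1}|$). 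Writing $\dot f(\lambda):=\tfrac{d}{dt}\big|_{0}f(\gamma_\lambda(t))$ for the derivative of $f$ along the unit-speed geodesic $\gamma_\lambda$ issued from $\pi(\lambda)$ with covector $\lambda$ (so that $\dot f(\lambda)=g(\nabla_H f,\dot\gamma_\lambda(0))$ by \eqref{eq:horgrad}, with $\dot\gamma_\lambda(0)$ a unit horizontal vector), this reads
\[
|\nabla_H f(q)|^p=C_{p,k}\int_{U_q^*M^{\mathsf r}}|\dot f(\lambda)|^p\,\eta_q^{\mathsf r}(\lambda).
\]

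The second ingredient is \emph{disintegration plus the Santaló formula}. Since the reduced Liouville measure disintegrates as $\eta_q^{\mathsf r}$ on the fibers over $\omega$ on the base, integrating the previous identity over $M$ gives
\[
\int_M|\nabla_H f|^p\,\omega=C_{p,k}\int_{U^*M^{\mathsf r}}|\dot f|^p\,\mu^{\mathsf r}=C_{p,k}\int_{U^{\vis}M^{\mathsf r}}|\dot f|^p\,\mu^{\mathsf r},
\]
the last step because, $M$ being compact, the non-visible covectors form a $\mu^{\mathsf r}$-null set. Applying \eqref{eq:santalo-reduced-intro} with $F=|\dot f|^p$ turns the right-hand side into an integral over geodesic chords: for $\lambda\in U_q^+\partial M^{\mathsf r}$ the curve $t\mapsto\gamma_\lambda(t)$, $t\in[0,\ell(\lambda)]$, is a full chord with both endpoints on $\partial M$, along which $\dot f(\phi_t\lambda)=u'(t)$ where $u(t):=f(\gamma_\lambda(t))$ vanishes (indeed identically, near the endpoints) since $f\in C_0^\infty(M)$.

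The third ingredient consists of the \emph{sharp one-dimensional inequalities combined with the reverse Santaló step}. On each chord I apply, for the first estimate, the sharp $p$-Poincaré (Wirtinger) inequality $\int_0^\ell|u'|^p\ge(\pi_p/\ell)^p\int_0^\ell|u|^p$ whose optimal constant is exactly $\pi_p$ (the half-period of $\sin_p$), and for the second the classical one-dimensional Hardy inequality $\int_0^\ell|u'|^p\ge(\tfrac{p-1}{p})^p\int_0^\ell|u|^p/(\ell-t)^p$. The geometric point that makes these fit the weights is that along a chord the full length $L(\phi_t\lambda)\equiv\ell(\lambda)$ is constant, whereas the forward exit length is $\ell(\phi_t\lambda)=\ell(\lambda)-t$, the distance to the forward endpoint. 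Hence the Poincaré weight $\ell(\lambda)^{-p}=L(\phi_t\lambda)^{-p}$ and the Hardy weight $(\ell(\lambda)-t)^{-p}=\ell(\phi_t\lambda)^{-p}$ are precisely the values of $L^{-p}$ and $\ell^{-p}$ along the chord. Running \eqref{eq:santalo-reduced-intro} backwards on $\tilde F(\lambda)=|f(\pi\lambda)|^p/L(\lambda)^p$ (resp. $/\ell(\lambda)^p$) and disintegrating once more recovers $\int_M|f|^p/R^p\,\omega$ (resp. $\int_M|f|^p/r^p\,\omega$) by the definitions of $R$ and $r$, and multiplying by $C_{p,k}$ yields \eqref{eq:p-hardy1-intro} and \eqref{eq:p-hardy2-intro}.

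The genuinely delicate points are measure-theoretic rather than computational. To run both directions of the Santaló formula and to exchange integrals freely (Tonelli) I must know that $|\dot f|^p$, $|f|^p/L^p$ and $|f|^p/\ell^p$ are summable on $U^*M^{\mathsf r}$ and that $1/R^p,1/r^p$ are finite $\omega$-a.e.; this is exactly the summability caveat of the remark following Theorem~\ref{t:santalo-reduced-intro}, and it is what forces the passage to the \emph{compact} reduced bundle $U^*M^{\mathsf r}$ — on the non-compact $U^*M$ the argument breaks down. The compact support of $f$ makes all near-boundary integrability issues trivial (each $u$ vanishes near the chord endpoints, so the $(\ell-t)^{-p}$ singularity in the Hardy step is harmless) and guarantees the two one-dimensional inequalities apply with their sharp constants. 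The only remaining verifications are the Beta-integral evaluation identifying $C_{p,k}$ and the classical optimal constants $\pi_p$ and $\tfrac{p-1}{p}$; both specialize to $\pi^2$ and $\tfrac14$ at $p=2$, recovering Proposition~\ref{p:hardylike-intro}.
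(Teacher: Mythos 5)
Your proposal follows essentially the same route as the paper's proof: the spherical moment identity $\int_{U_q^*M^{\mathsf r}}|\langle\lambda,\nabla_Hf\rangle|^p\,\eta_q^{\mathsf r}=|\nabla_Hf|^p\,C_{p,k}^{-1}$ via a Beta integral, the reduced Santal\'o formula applied to $F=|\langle\phi_t(\lambda),\nabla_Hf\rangle|^p$, the sharp one-dimensional $L^p$ Poincar\'e and Hardy inequalities on each chord using $L(\phi_t\lambda)=\ell(\lambda)$ and $\ell(\phi_t\lambda)=\ell(\lambda)-t$, and a final backward application of Santal\'o plus disintegration. The constants and the structure match the paper's argument, which explicitly reduces to the scheme of the quadratic case.

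One justification you give is false, although the step it supports survives for a different reason. You claim that, $M$ being compact, the non-visible covectors form a $\mu^{\mathsf r}$-null set, so that $\int_{U^*M^{\mathsf r}}|\dot f|^p\,\mu^{\mathsf r}=\int_{U^{\vis}M^{\mathsf r}}|\dot f|^p\,\mu^{\mathsf r}$. Compactness does not give this: $M$ may contain reduced geodesics that never reach $\partial M$ in either time direction (e.g.\ closed geodesics, as in the spherical band of Appendix~\ref{sec:closed-geodesics}), and these can fill a set of positive $\mu^{\mathsf r}$-measure. What you actually need at that point is only the inequality $\int_{U^*M^{\mathsf r}}|\dot f|^p\,\mu^{\mathsf r}\ge\int_{U^{\vis}M^{\mathsf r}}|\dot f|^p\,\mu^{\mathsf r}$, which holds trivially because the integrand is nonnegative; this is exactly how the paper writes the step. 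Replace the equality and its justification by this inequality and the argument is complete.
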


\subsection{Lower bound for the first Dirichlet eigenvalue}
\label{sec:spectral-gap-intro}

For any given smooth volume $\omega$, a fundamental operator in sub-Riemannian geometry is the sub-Laplacian $\Delta_\omega$, playing the role of the Laplace-Beltrami operator in Riemannian geometry. Under the bracket-generating condition, this is an hypoelliptic operator on $L^2(M,\omega)$. Its principal symbol is (twice) the Hamiltonian, thus the Dirichlet spectrum of $-\Delta_\omega$ on the compact manifold $M$ is positive and discrete. We denote it
\begin{equation}
0 < \lambda_1(M) \leq \lambda_2(M) \leq \ldots.
\end{equation}
As a consequence of Proposition~\ref{p:hardylike-intro} and the min-max principle, we obtain a universal lower bound for the first Dirichlet eigenvalue $\lambda_1(M)$ on the given domain. Here by universal we mean an estimate not requiring any assumption on curvature or capacity.

\begin{prop}[Universal spectral lower bound] \label{p:lowerbound-intro}
Let $L = \sup_{\lambda \in U^*M^{\mathsf r}} L(\lambda)$ be the length of the longest reduced geodesic contained in $M$. Then, letting $k = \rank \distr$,
\begin{equation}\label{eq:lambda1-intro}
\lambda_1(M) \ge  \frac{k\pi^2}{L^2},
\end{equation}
where we set the r.h.s. to $0$ if $L= +\infty$.
\end{prop}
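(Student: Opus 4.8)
The plan is to obtain~\eqref{eq:lambda1-intro} as a direct consequence of the first Hardy-like inequality~\eqref{eq:hardy1-intro} together with the variational (min-max) characterization of the bottom of the Dirichlet spectrum. First I would recall that the sub-Laplacian $\Delta_\omega$ is, by construction, the operator associated via integration by parts to the Dirichlet form $f \mapsto \int_M |\nabla_H f|^2\,\omega$ on $C_0^\infty(M)$, so that the min-max principle yields
\begin{equation}
\lambda_1(M) = \inf_{\substack{f \in C_0^\infty(M)\\ f \neq 0}} \frac{\int_M |\nabla_H f|^2\,\omega}{\int_M f^2\,\omega}.
\end{equation}
It then suffices to bound this Rayleigh quotient from below by $k\pi^2/L^2$ uniformly in $f$, and the whole argument reduces to controlling the weight $1/R^2$ appearing in~\eqref{eq:hardy1-intro}.

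The key step is the following pointwise estimate. By definition $L = \sup_{\lambda \in U^*M^{\mathsf r}} L(\lambda)$, hence $L(\lambda) \le L$ and $1/L(\lambda)^2 \ge 1/L^2$ for every $\lambda$. Integrating over the reduced fiber $U_q^*M^{\mathsf r} \simeq \mathbb{S}^{k-1}$, and using that the reduced spherical measure satisfies $\int_{U_q^*M^{\mathsf r}}\eta_q^{\mathsf r} = |\mathbb{S}^{k-1}|$, I would deduce
\begin{equation}
\frac{1}{R^2(q)} = \int_{U_q^*M^{\mathsf r}} \frac{1}{L(\lambda)^2}\,\eta_q^{\mathsf r} \;\ge\; \frac{1}{L^2}\int_{U_q^*M^{\mathsf r}}\eta_q^{\mathsf r} = \frac{|\mathbb{S}^{k-1}|}{L^2}, \qquad \forall q \in M.
\end{equation}

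Plugging this bound into~\eqref{eq:hardy1-intro} gives, for every $f \in C_0^\infty(M)$,
\begin{equation}
\int_M |\nabla_H f|^2\,\omega \;\ge\; \frac{k\pi^2}{|\mathbb{S}^{k-1}|}\int_M \frac{f^2}{R^2}\,\omega \;\ge\; \frac{k\pi^2}{|\mathbb{S}^{k-1}|}\cdot\frac{|\mathbb{S}^{k-1}|}{L^2}\int_M f^2\,\omega = \frac{k\pi^2}{L^2}\int_M f^2\,\omega,
\end{equation}
so that dividing by $\int_M f^2\,\omega$ and taking the infimum over $f$ yields~\eqref{eq:lambda1-intro}. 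The case $L = +\infty$ is covered trivially, since the right-hand side is set to $0$ and $\lambda_1(M) > 0$.

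I do not expect any serious obstacle in this argument, which is essentially a corollary of Proposition~\ref{p:hardylike-intro}: all the genuine difficulty is concentrated upstream in the proof of the Hardy inequality. The only two points requiring a moment of care are the normalization $\int_{U_q^*M^{\mathsf r}}\eta_q^{\mathsf r} = |\mathbb{S}^{k-1}|$ of the reduced fiber measure, which must be tracked back to the construction of $\eta_q^{\mathsf r}$ in the reduction procedure, and the identification of the Dirichlet form of $\Delta_\omega$ with $\int_M |\nabla_H f|^2\,\omega$, which relies on the divergence theorem for the fixed volume $\omega$ and the vanishing of $f$ on $\partial M$.
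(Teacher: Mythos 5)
Your argument is correct and is essentially identical to the paper's own proof: the paper likewise specializes the Hardy-like inequality \eqref{eq:hardy1-intro} via the pointwise bound $L(\lambda)\le L$ (using the normalization $\int_{U_q^*M^{\mathsf r}}\eta_q^{\mathsf r}=|\mathbb{S}^{k-1}|$ from Lemma~\ref{l:eta-r}) and then applies the min-max principle \eqref{eq:minmax}. Your write-up merely makes explicit the intermediate estimate on $1/R^2(q)$ that the paper leaves implicit.
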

\begin{rmk}
  In~\eqref{eq:lambda1-intro}, $L$ cannot be replaced by the sub-Riemannian diameter, as $M$ might contain very long (non-minimizing) geodesics, for example closed ones, and $L = +\infty$. See Appendix~\ref{sec:closed-geodesics} for more details.
\end{rmk}
In the Riemannian case, as noted by Croke, we attain equality in~\eqref{eq:lambda1-intro} when $M$ is the hemisphere of the Riemannian round sphere. We prove the following extension to the sub-Riemannian setting.

\begin{prop}[Sharpness of the eigenvalue lower bound]\label{p:sharpness-lowerbound-intro}
In Proposition~\ref{p:lowerbound-intro}, in the following cases we have equality, for all $d \geq 1$:
	\begin{itemize}
	\item[(i)] the hemispheres $\mathbb{S}^d_+$ of the Riemannian round sphere $\mathbb{S}^d$;
	\item[(ii)] the hemispheres $\mathbb{S}^{2d+1}_+$ of the sub-Riemannian complex Hopf fibration $\mathbb{S}^{2d+1}$;
	\item[(iii)] the hemispheres $\mathbb{S}^{4d+3}_+$  of the sub-Riemannian quaternionic Hopf fibration $\mathbb{S}^{4d+3}$;
	\end{itemize}
all equipped with the Riemannian volume of the corresponding round sphere. In all these cases,  $L = \pi$ and $\lambda_1(M) = d$, $2d$ or $4d$, respectively. Moreover, the associated eigenfunction is $\Psi = \cos(\delta)$, where $\delta$ is the Riemannian distance from the north pole.
\end{prop}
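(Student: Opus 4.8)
The plan is to establish, in each of the three cases, both inequalities $\lambda_1(M)\ge k\pi^2/L^2$ and $\lambda_1(M)\le k\pi^2/L^2$. The first is precisely Proposition~\ref{p:lowerbound-intro}, so the entire content is to compute $L$, to exhibit a Dirichlet eigenfunction achieving the reverse bound, and to identify it as the ground state. Throughout write $N$ for the dimension of the sphere ($N=d$, $2d+1$, $4d+3$) and $m$ for the corank ($m=0,1,3$), so that $k=\rank\distr=N-m$ equals $d$, $2d$, $4d$ respectively. Let $p_0$ be the north pole; then $\cos\delta=\langle\,\cdot\,,p_0\rangle$ is the restriction to $\mathbb{S}^N$ of a linear coordinate on the ambient $\R^{N+1}=\C^{d+1}$ or $\H^{d+1}$. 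In particular $\Psi:=\cos\delta$ is smooth on all of $\mathbb{S}^N$, is strictly positive on the open hemisphere $\{\delta<\pi/2\}$, and vanishes on the equator $\partial M=\{\delta=\pi/2\}$, so it satisfies the Dirichlet condition.

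First I would compute $L=\pi$. In the sub-Riemannian cases the Hopf fibration is a Riemannian submersion with totally geodesic fibres, for which a geodesic that is horizontal at one point stays horizontal and is the horizontal lift of a base geodesic; these are exactly the round great circles everywhere tangent to $\distr$, and by hypothesis (\textbf{H1}) they are precisely the reduced geodesics (those with vanishing vertical momentum). Hence every reduced geodesic is a unit-speed great circle $\gamma$, along which $\cos\delta(\gamma(t))=\langle\gamma(t),p_0\rangle=A\cos(t-t_0)$ for constants $A\ge 0$, $t_0$. The portion of $\gamma$ lying in the closed hemisphere is the locus where this quantity is nonnegative, i.e. $t-t_0\in[-\pi/2,\pi/2]$, an arc of length exactly $\pi$ whenever $A>0$ (if $A=0$ the great circle lies in $\partial M$). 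Thus $L(\lambda)\le\pi$ for every reduced $\lambda$, with equality attained by any reduced geodesic meeting the interior (e.g. one through $p_0$), giving $L=\pi$. The Riemannian case $\mathbb{S}^d_+$ is identical, all geodesics being great circles.

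Next I would verify that $\Psi$ is an eigenfunction of $\Delta_\omega$. The key tool is the decomposition of the Laplace--Beltrami operator of a Riemannian submersion with totally geodesic fibres, $\Delta_{\mathbb{S}^N}=\Delta_\omega+\Delta_{\ver}$, where $\Delta_\omega$ is the sub-Laplacian for the Riemannian volume and $\Delta_{\ver}$ is the leafwise (fibre) Laplacian; this is part of the foliation analysis of Section~\ref{s:foliations}. Since $\Psi$ is a linear coordinate it is a first spherical harmonic, $\Delta_{\mathbb{S}^N}\Psi=-N\Psi$. For the fibre term, parametrising the scalar action of $\C$ (resp. $\H$) through $z$ as $\theta\mapsto z_\theta$ with $z_0=z$, one finds $\cos\delta(z_\theta)=\mathrm{Re}\!\left(e^{-\theta\mathbf{e}}\langle z,p_0\rangle\right)$ for a unit imaginary $\mathbf{e}$, so each unit vertical generator $V$ gives $V^2\Psi=\partial_\theta^2|_{\theta=0}\cos\delta(z_\theta)=-\Psi$; summing over the $m$ orthonormal generators of the fibre yields $\Delta_{\ver}\Psi=-m\Psi$. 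Subtracting,
\[
\Delta_\omega\Psi=\Delta_{\mathbb{S}^N}\Psi-\Delta_{\ver}\Psi=-(N-m)\Psi=-k\Psi.
\]

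Finally, $\Psi$ vanishes on $\partial M$ and is strictly positive in the interior, hence it is a positive Dirichlet eigenfunction; since the ground state of a discrete-spectrum hypoelliptic sub-Laplacian is the unique eigenfunction of constant sign, $\Psi$ is the first eigenfunction and $\lambda_1(M)=k$. Combined with $L=\pi$ this gives $\lambda_1(M)=k=k\pi^2/L^2$, i.e. equality in Proposition~\ref{p:lowerbound-intro}. I expect the main obstacle to be the geodesic analysis of the second step: one must be certain that the reduced geodesics are exactly the horizontal great circles and that no reduced geodesic contained in the closed hemisphere is longer than $\pi$ — in particular that the supremum defining $L$ is genuinely realized and not exceeded by limiting or boundary-grazing geodesics. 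By contrast the eigenvalue identity is a direct consequence of the submersion decomposition once the normalisation of the fibre metric (unit-length Killing generators, hence fibres of length $2\pi$) is fixed, and the direct verification of $\Psi$ as a positive eigenfunction conveniently bypasses any subtlety coming from characteristic points on the equator.
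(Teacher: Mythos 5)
Your argument is correct, and the first and last steps coincide with the paper's: the computation $L=\pi$ rests, as in the paper, on the fact that reduced geodesics are exactly the horizontal great circles (Remark~\ref{r:twoflows}), and the identification of $\Psi=\cos\delta$ as the ground state via positivity on the interior plus the min--max/orthogonality argument is the same as the paper's appeal to \cite[Cor.~2, p.~20]{chavelbook-eigen}. Where you genuinely diverge is the verification that $\Delta_\omega\Psi=-k\Psi$. The paper works in the adapted coordinates $(\theta,w)$ resp.\ $(\theta_1,\theta_2,\theta_3,w)$ of Examples~\ref{ex:cr-hopf}--\ref{ex:qhf}, writes $\Psi=\cos\theta\cos r$ (resp.\ $\cos\eta\cos r$), and invokes the explicit ``cylindrical part'' of the sub-Laplacian from \cite{BW-CR,BW-QHF}; you instead use the splitting $\Delta_{\mathbb{S}^N}=\Delta_\omega+\Delta_{\ver}$ for a Riemannian submersion with totally geodesic fibres, note that $\Psi$ is a first spherical harmonic ($\Delta_{\mathbb{S}^N}\Psi=-N\Psi$) and a first harmonic on each fibre ($\Delta_\ver\Psi=-m\Psi$, from $V^2\Psi=-\Psi$ for each unit Killing generator), and subtract. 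Your route is more uniform across the three cases and avoids the external radial-part formulas, but it shifts the burden onto the identity $\dive_{\omega_R}(\nabla_\ver f)=\Delta_{\mathrm{leaf}}f$, which is \emph{not} actually established in Section~\ref{s:foliations} of the paper (it follows from total geodesy of the fibres together with the vanishing of the vertical part of $\sum_a\nabla_{e_a}e_a$ for a Riemannian submersion, and deserves a line of proof or a reference). Two further small points: your positivity argument needs only that a nonnegative first eigenfunction exists (from $E(|f|)\le E(f)$) and that a strictly positive eigenfunction cannot be $L^2(\omega)$-orthogonal to it, which is what you implicitly use and is fine; and the great circles lying entirely in the equator, which you correctly flag as the case $A=0$, formally have $\ell=+\infty$ under the paper's literal definition of exit length --- this is a shared convention issue with the paper's own statement of $L=\pi$, not a defect of your proof relative to it.
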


\begin{rmk}
The Riemannian volume of the sub-Riemannian Hopf fibrations coincides, up to a constant factor, with their Popp volume \cite{BR-Popp,montgomerybook}, an intrinsic smooth measure in sub-Riemannian geometry. 
This is proved for 3-Sasakian structures (including the QHF) in \cite[Prop. 34]{RS-fatcomparison} and can be proved exactly in the same way for Sasakian structures (including the CHF) using the explicit formula for Popp volume of \cite{BR-Popp}. For the case (i) $\Delta_\omega$ is the Laplace-Beltrami operator. For the cases (ii) and (iii) $\Delta_\omega$ is the standard sub-Laplacian of CR and QC geometry, respectively.
\end{rmk}

In principle, $L$ can be computed when the reduced geodesic flow is explicit. This is the case for Carnot groups, where reduced geodesics passing through the origin are simply straight lines (they fill a $k$-plane for rank $k$ Carnot groups). It turns out that, in this case, $L = \diam_H(M)$ (the \emph{horizontal diameter}, that is the diameter of the set $M$ measured through left-translations of the aforementioned straight lines).  Thus~\eqref{eq:lambda1-intro} gives an easily computable lower bound for the first Dirichlet eigenvalue in terms of purely metric quantities.
\begin{cor}\label{c:carnot1-intro}
Let $M$ be a compact $n$-dimensional submanifold with piecewise $C^{1,1}$ boundary of a Carnot group of rank $k$, with the Haar volume. Then,
\begin{equation}\label{eq:lambda1carnot-intro}
\lambda_1(M) \ge \frac{k\pi^2}{\diam_H(M)^2},
\end{equation}
where $\diam_H(M)$ denotes the horizontal diameter of $M$.
\end{cor}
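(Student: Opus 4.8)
The plan is to derive this corollary directly from the universal spectral lower bound of Proposition~\ref{p:lowerbound-intro} by showing that, for a Carnot group equipped with the Haar volume, the quantity $L = \sup_{\lambda \in U^*M^{\mathsf r}} L(\lambda)$ coincides with the horizontal diameter $\diam_H(M)$. Since Proposition~\ref{p:lowerbound-intro} already gives $\lambda_1(M) \geq k\pi^2/L^2$ (and the Haar volume is exactly the choice of $\omega$ for which hypotheses (\textbf{H1}) and (\textbf{H2}) hold on a Carnot group, as stated in the setting), the entire content of the corollary reduces to the metric identity $L = \diam_H(M)$.

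First I would recall the structure of the reduced geodesic flow on a Carnot group. As indicated in the paragraph preceding the statement, reduced covectors are those annihilating the vertical bundle $\ver$, and the reduced geodesics through the origin are precisely straight horizontal lines, sweeping out the rank-$k$ horizontal plane $\distr_0$; by left-invariance of the structure, the reduced geodesic through an arbitrary point $q$ is the left-translate $q\cdot \exp(t\,v)$ of such a line, with $v \in \distr_0$ of unit norm. Consequently, each maximal reduced geodesic contained in $M$ is a horizontal straight segment (in exponential coordinates adapted to the first layer), and its length $L(\lambda) = \ell(\lambda) + \ell(-\lambda)$ is the full length of that segment inside $M$.

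Next I would make precise the definition of $\diam_H(M)$ as the supremum of the lengths of horizontal straight segments contained in $M$, i.e.\ the diameter of $M$ measured along left-translates of one-parameter horizontal subgroups. The identification $L = \diam_H(M)$ then follows from a direct double inequality: every reduced geodesic in $M$ is one of these horizontal segments, so taking the supremum of $L(\lambda)$ over $\lambda \in U^*M^{\mathsf r}$ gives exactly the supremum of the lengths of such segments, which is $\diam_H(M)$. I would be careful to note that length-parametrization of the reduced geodesics matches the sub-Riemannian length of the corresponding horizontal segment, so that the two suprema are literally the same number, possibly $+\infty$ (in which case both sides of \eqref{eq:lambda1carnot-intro} are understood as giving the trivial bound $\lambda_1(M) \geq 0$, consistent with the convention in Proposition~\ref{p:lowerbound-intro}).

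The only genuine subtlety, and the step I expect to require the most care, is verifying that the reduced geodesics through the origin are \emph{exactly} straight lines and that they are length-minimizing up to the cut locus in the way needed for $L(\lambda)$ to equal the full segment length; this is where the Carnot group structure and the specific choice of the Haar volume enter, ensuring the flow invariance (\textbf{H1})--(\textbf{H2}) and the explicit form of the normal geodesics emanating from covectors annihilating $\ver$. Once this identification is in hand, substituting $L = \diam_H(M)$ into \eqref{eq:lambda1-intro} yields \eqref{eq:lambda1carnot-intro} immediately, completing the proof.
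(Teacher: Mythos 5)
Your proposal matches the paper's argument: the corollary is obtained by combining Proposition~\ref{p:lowerbound-intro} with the facts that Carnot groups with the Haar volume satisfy (\textbf{H1})--(\textbf{H2}) (Proposition~\ref{p:CarnotOK}) and that reduced geodesics are left-translates of straight horizontal lines, so that $L = \diam_H(M)$ by definition of the horizontal diameter. The only superfluous worry in your write-up is the length-minimization of reduced geodesics up to the cut locus: $L(\lambda) = \ell(\lambda)+\ell(-\lambda)$ is defined via the \emph{exit} length, not the cut length, so no optimality is required for $L(\lambda)$ to equal the length of the maximal horizontal segment inside $M$.
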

In particular, if $M$ is the metric ball of radius $R$, we obtain $\lambda_1(M) \geq k \pi^2/ (2R)^2$. Clearly \eqref{eq:lambda1carnot-intro} is not sharp, as one can check easily in the Euclidean case.

\subsection{Isoperimetric-type inequalities}
\label{sec:isoperimetric-intro}

In this section we relate the sub-Riemann\-ian area and perimeter of $M$ with some of its geometric properties. Since $M$ is compact, the sub-Riemannian diameter $\diam(M)$ can be characterized  as the length of the longest optimal geodesic contained in $M$. Analogously, the reduced sub-Riemannian diameter $\diam^\mathsf{r}(M)$ is the length of the longest reduced optimal geodesic contained in $M$. Indeed $\diam^\mathsf{r}(M) \leq \diam(M)$.

\begin{figure}
\includegraphics[scale=1]{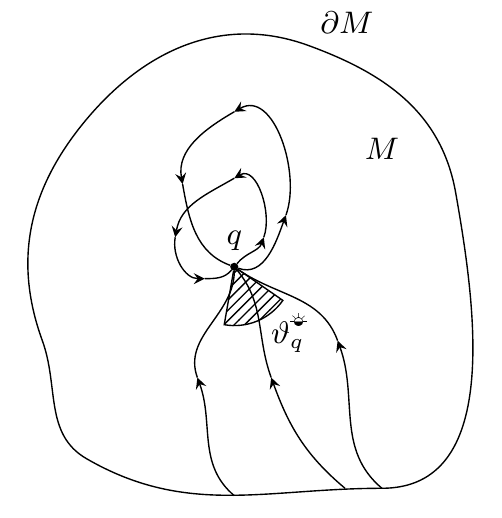}
\caption{Visibility angle on a 2D Riemannian manifold. Only the geodesics with tangent vector in the dashed slice go to $\partial M$.}\label{f:fig2}
\end{figure}
Consider all reduced geodesics passing through $q \in M$ with covector $\lambda$. Some of them originate from the boundary $\partial M$, that is $\ell(-\lambda) < +\infty$; others do not, i.e. $\ell(-\lambda) = +\infty$. The relative ratio of  the lengths of these two types of geodesics (w.r.t. an appropriate measure on $U_q^* M^\mathsf{r}$) is called the \emph{visibility angle} $\theta^\vis_q  \in [0,1]$ at $q$ (see Definition~\ref{def:vis-angle}). Roughly speaking, if $\theta^\vis_q = 1$ then any geodesic passing through $q$ will hit the boundary and, on the opposite, if it is equal to $0$ then $q$ is not visible from the boundary (see Fig.~\ref{f:fig2}). Similarly, we define the \emph{optimal visibility angle} $\tilde\theta^\vis_q$ by replacing $\ell(-\lambda)$ with $\tilde{\ell}(-\lambda)$. Finally, the \emph{least visibility angle} is $\theta^\vis := \inf_{q \in M} \theta^\vis_q$, and similarly for the \emph{least optimal visibility angle} $\tilde\theta^\vis := \inf_{q \in M} \tilde\theta^\vis_q$.

\begin{prop}[Isoperimetric-type inequalities]\label{p:type1iso-intro}
Let $\ell := \sup\{\ell(\lambda)  \mid \lambda \in U_q^*M^{\mathsf r},\, q \in \partial M\}$ be the length of the longest reduced geodesic contained in $M$ starting from the boundary $\partial M$. Then
	\begin{equation}\label{eq:type1iso-intro}
		\frac{\sigma(\partial M)}{\omega(M)} \ge C  \frac{\theta^\vis}{\ell} 
		\qquad \text{and} \qquad
		\frac{\sigma(\partial M)}{\omega(M)} \ge    C \frac{\tilde\theta^\vis}{\diam^\mathsf{r}(M)} ,
	\end{equation}
	where $C = 2\pi|\mathbb S^{k-1}|/|\mathbb S^{k}|$ and we set the r.h.s.\ to $0$ if $\ell = +\infty$.
\end{prop}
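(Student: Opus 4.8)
The plan is to derive the isoperimetric-type inequalities from the reduced Santal\'o formulas of Theorem~\ref{t:santalo-reduced-intro} by making a clever choice of the test function $F$ and then estimating both sides. The two inequalities in~\eqref{eq:type1iso-intro} will come from the two formulas~\eqref{eq:santalo-reduced-intro} and~\eqref{eq:santalo-tilde-reduced-intro} respectively; I sketch the first one, the second being analogous with $\ell$ replaced by $\tilde\ell$ and the visible set by the optimally visible set.

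First I would choose $F \equiv 1$ on $U^*M^{\mathsf r}$ and apply~\eqref{eq:santalo-reduced-intro}. The inner time-integral $\int_0^{\ell(\lambda)} dt = \ell(\lambda)$ then appears on the right, so the right-hand side becomes
\begin{equation}
\int_{\partial M}\left[\int_{U_q^+\partial M^{\mathsf r}} \ell(\lambda)\,\langle\lambda,\mathbf{n}_q\rangle\,\eta_q^{\mathsf r}(\lambda)\right]\sigma(q).
\end{equation}
Since every reduced geodesic starting from $\partial M$ has length at most $\ell$ (the supremum in the statement), I bound $\ell(\lambda)\le \ell$ and pull it out. The remaining factor $\int_{U_q^+\partial M^{\mathsf r}}\langle\lambda,\mathbf{n}_q\rangle\,\eta_q^{\mathsf r}(\lambda)$ is a fixed geometric integral over the inward-pointing reduced hemisphere: it is the flux of the unit reduced sphere through a hyperplane, a constant depending only on $k$. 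This is where the explicit constant $C = 2\pi|\mathbb{S}^{k-1}|/|\mathbb{S}^k|$ must be computed, by integrating $\langle\lambda,\mathbf{n}_q\rangle_+$ over $\mathbb{S}^{k-1}$ against the normalized measure $\eta_q^{\mathsf r}$; I expect this to reduce to a one-dimensional Beta-integral. Thus the right-hand side is bounded above by a constant times $\ell\cdot\sigma(\partial M)$.

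For the left-hand side with $F\equiv 1$, the integral $\int_{U^\vis M^{\mathsf r}} \mu^{\mathsf r}$ is the reduced Liouville measure of the visible set. The key step is to relate this to $\omega(M)$ using the \emph{visibility angle} $\theta^{\vis}$. Disintegrating $\mu^{\mathsf r}$ over the base $M$ via the fibration $U^*M^{\mathsf r}\to M$, the total measure of each fiber $U_q^*M^{\mathsf r}$ against the normalized $\eta_q^{\mathsf r}$ is $1$, so $\mu^{\mathsf r}(U^*M^{\mathsf r}) = |\mathbb{S}^{k-1}|\,\omega(M)$ up to normalization. By the definition of the least visibility angle $\theta^{\vis} = \inf_q \theta_q^{\vis}$ as the fiberwise ratio of visible directions, the visible portion satisfies $\mu^{\mathsf r}(U^\vis M^{\mathsf r}) \ge \theta^{\vis}\,\mu^{\mathsf r}(U^*M^{\mathsf r})$. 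Combining the lower bound on the left with the upper bound on the right and rearranging yields $\sigma(\partial M)/\omega(M) \ge C\,\theta^{\vis}/\ell$, with the constants matching after the flux computation.

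\textbf{The main obstacle} I anticipate is the careful bookkeeping of the measure normalizations: the precise relationship between $\mu^{\mathsf r}$, $\eta_q^{\mathsf r}$, and $\omega$ under disintegration must be pinned down so that the normalization constants on the two sides combine into exactly $C = 2\pi|\mathbb{S}^{k-1}|/|\mathbb{S}^k|$, and the definition of $\theta_q^{\vis}$ (Definition~\ref{def:vis-angle}) must be invoked in the exact measure-theoretic form used here. A secondary subtlety is handling the case $\ell = +\infty$ (the convention that the right-hand side is then $0$ makes the inequality trivially true) and ensuring the visible and optimally visible sets are genuinely measurable, which is guaranteed by the first assertion of Theorem~\ref{t:santalo-reduced-intro}. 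Once the flux constant and the disintegration are settled, the rest is a direct chain of inequalities.
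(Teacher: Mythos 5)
Your proposal is correct and follows essentially the same route as the paper's proof: set $F\equiv 1$ in the reduced Santal\'o formulas, bound $\ell(\lambda)\le\ell$ (resp.\ $\tilde\ell(\lambda)\le\diam^{\mathsf r}(M)$) on the right and evaluate the flux integral $\int_{\mathbb{S}^{k-1}\cap\{p_1>0\}}p_1\,d\mathrm{vol}_{\mathbb{S}^{k-1}}=|\mathbb{S}^k|/(2\pi)$ via Lemma~\ref{l:eta-r}, and on the left disintegrate $\mu^{\mathsf r}$ over $M$ (Lemma~\ref{l:vert-split-sub}) so that Definition~\ref{def:vis-angle} gives $\mu^{\mathsf r}(U^\vis M^{\mathsf r})\ge\theta^\vis|\mathbb{S}^{k-1}|\,\omega(M)$. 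The normalizations you flag do combine into $C=2\pi|\mathbb{S}^{k-1}|/|\mathbb{S}^k|$ exactly as you anticipate.
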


The equality in~\eqref{eq:type1iso-intro} holds for the hemisphere of the Riemannian round sphere, as pointed out in \cite{croke-iso}. We have the following generalization to the sub-Riemannian setting.
\begin{prop}[Sharpness of isoperimetric inequalities]\label{p:sharp-iso-intro}
In Proposition~\ref{p:type1iso-intro}, in the following cases we have equality, for all $d \geq 1$:
	\begin{itemize}
	\item[(i)] the hemispheres $\mathbb{S}^d_+$ of the Riemannian round sphere $\mathbb{S}^d$;
	\item[(ii)] the hemispheres $\mathbb{S}^{2d+1}_+$ of the sub-Riemannian complex Hopf fibration $\mathbb{S}^{2d+1}$;
	\item[(iii)] the hemispheres $\mathbb{S}^{4d+3}_+$  of the sub-Riemannian quaternionic Hopf fibration $\mathbb{S}^{4d+3}$;
	\end{itemize}
where $\omega$ is the Riemannian volume of the corresponding round sphere. In all these cases $\theta^\vis = \tilde\theta^\vis = 1$ and $\ell= \diam^\mathsf{r}(M) = \pi$.
\end{prop}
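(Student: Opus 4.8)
The plan is to reduce all three cases to a single statement about great circles: I claim that the \emph{reduced} geodesics of the sub-Riemannian Hopf fibrations are exactly the horizontal great circles of the round sphere, so that the reduced geodesic flow on the hemisphere behaves, metrically, like the Riemannian one. Granting this, establishing equality splits into two tasks: (a) verifying the purely geometric quantities $\theta^\vis=\tilde\theta^\vis=1$ and $\ell=\diam^\mathsf{r}(M)=\pi$, and (b) checking that, with these values, the inequalities in the proof of Proposition~\ref{p:type1iso-intro} are all saturated. The numerical identity $\sigma(\partial M)/\omega(M)=C/\pi$ then follows for free, which is convenient because the horizontal surface measure $\sigma=\iota_{\mathbf n}\omega$ is \emph{not} the Riemannian one in the sub-Riemannian cases and would be awkward to compute directly.

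For case (i) this is Croke's classical computation, which I recall as a template. On the round sphere the reduction is trivial and geodesics are great circles; every great circle through an interior point of $\mathbb{S}^d_+$ meets the equator $\partial M=\mathbb{S}^{d-1}$ at two antipodal points, and the arc lying in the open hemisphere has length exactly $\pi$ and is minimizing (it joins antipodal points, at the cut length). Hence every interior point is crossed only by boundary-to-boundary arcs of the same length $\pi$, giving $\theta^\vis=\tilde\theta^\vis=1$ and $\ell=\diam^\mathsf{r}(M)=\pi$. Since $\omega(M)=\tfrac12|\mathbb{S}^d|$ and $\sigma(\partial M)=|\mathbb{S}^{d-1}|$, one checks directly $\sigma(\partial M)/\omega(M)=2|\mathbb{S}^{d-1}|/|\mathbb{S}^d|=C/\pi$ with $k=d$, which is the asserted equality.

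For the sub-Riemannian cases (ii)--(iii) I would first invoke the foliation analysis of Section~\ref{s:foliations}: since $\mathbb{S}^{2d+1}\to\mathbb{CP}^d$ and $\mathbb{S}^{4d+3}\to\mathbb{HP}^d$ are Riemannian submersions with totally geodesic fibers, a horizontal Riemannian geodesic stays horizontal and is a normal sub-Riemannian geodesic, and these are precisely the reduced geodesics (zero vertical momentum, i.e. initial covector annihilating $\ver$). Because Riemannian geodesics of the round sphere are great circles, the reduced geodesics on $\mathbb{S}^{2d+1}_+$ and $\mathbb{S}^{4d+3}_+$ are horizontal great-circle arcs, and the hemisphere geometry is identical to case (i): each meets the equator at antipodal points, lies in the open hemisphere over an arc of length $\pi$, and is minimizing there. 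This yields $\theta^\vis=\tilde\theta^\vis=1$ and $\ell=\diam^\mathsf{r}(M)=\pi$ exactly as before, now with $k=\rank\distr=2d$ and $4d$ respectively.

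With the geometric facts in hand, equality in~\eqref{eq:type1iso-intro} follows by tracing the proof of Proposition~\ref{p:type1iso-intro}: the bound $\ell(\lambda)\le\ell$ becomes an identity because every reduced geodesic from the boundary has length exactly $\pi$, and the visibility factor is exactly $1$ because $\theta^\vis_q=\tilde\theta^\vis_q=1$ at every $q$ (no reduced geodesic escapes the boundary). Consequently the chain of inequalities collapses to $\sigma(\partial M)/\omega(M)=C\,\theta^\vis/\ell=C/\pi$, giving equality in both inequalities of~\eqref{eq:type1iso-intro}. The one technical point to address is the presence of characteristic points on the equator, where $\mathbf n$ and $\sigma$ are undefined; these form a set of measure zero on $\partial M$, so they do not affect any of the integral identities coming from Theorem~\ref{t:santalo-reduced-intro}. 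The main obstacle is thus item (a) for (ii)--(iii): rigorously identifying the reduced geodesics with horizontal great circles and verifying that they sweep out the open hemisphere by length-$\pi$ boundary-to-boundary arcs, since once this is in place everything else is a direct consequence of the sharp Santal\'o identity.
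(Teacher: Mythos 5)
Your argument is correct, and it is essentially the one-sentence version with which the paper's proof opens (``all the inequalities in the proof of Proposition~\ref{p:type1iso-intro} are equalities, hence the sharpness follows''); the difference lies in what is actually written out. After that sentence the paper's displayed work is a \emph{direct} verification for the CHF: it computes the horizontal normal $\mathbf{n}_q=\sqrt{1-y_0^2}\,\partial_{x_0}\bmod T_q\partial M$, integrates the resulting non-Riemannian density to get $\sigma(\partial M)$, compares with $\omega(M)=|\mathbb{S}^{2d+1}|/2$, and only then invokes Remark~\ref{r:twoflows} to get $\theta^\vis=\tilde\theta^\vis=1$ and $\ell=\diam^\mathsf{r}(M)=\pi$. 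You instead promote the saturation argument to be the entire proof: once $\ell(\lambda)\equiv\pi$ on $U^+\partial M^{\mathsf r}$ and $\theta^\vis_q\equiv\tilde\theta^\vis_q\equiv 1$, the only two estimates in the derivation of \eqref{eq:type1iso-intro} (the lower bound $\int_M\theta^\vis_q\,\omega\ge\theta^\vis\omega(M)$ and the bound $\ell(\lambda)\le\ell$ under the integral) are identities, so the Santal\'o formula with $F=1$ yields $\sigma(\partial M)/\omega(M)=C/\pi$ without computing either side. This is legitimate and arguably cleaner, since it sidesteps the explicit handling of the density $\sqrt{1-y_0^2}$ of $\sigma$ near the characteristic points. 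Two points you flag as obstacles are in fact already settled in the paper and worth citing explicitly: the identification of reduced geodesics with horizontal great circles is precisely Remark~\ref{r:twoflows} (equivalently, the characterization of reduced geodesics via Riemannian submersions in Section~\ref{s:foliations}); and the minimality of horizontal great-circle arcs of length $t<\pi$ (needed for $\tilde\theta^\vis=1$ and $\diam^\mathsf{r}(M)=\pi$ in cases (ii)--(iii)) follows from the one-line comparison that such an arc is a horizontal curve of length $t$ joining points at Riemannian distance $t$, and the sub-Riemannian distance dominates the Riemannian one.
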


We can apply Proposition~\ref{p:type1iso-intro} to Carnot groups equipped with the Haar measure. In this case $\theta^\vis = \tilde\theta^\vis = 1$ and $\ell = \diam^\mathsf{r}(M) = \diam_H(M)$. Moreover, $\omega$ is the Lebesgue volume of $\R^n$ and $\sigma$ is the associated perimeter measure of geometric measure theory \cite{Capognabook}.
\begin{cor}\label{c:carnot2-intro}
Let $M$ be a compact $n$-dimensional submanifold with piecewise $C^{1,1}$ boundary of a Carnot group of rank $k$, with the Haar volume. Then,
\begin{equation}
\frac{\sigma(\partial M)}{\omega(M)} \geq \frac{2\pi |\mathbb{S}^{k-1}|}{|\mathbb{S}^k|\diam_H(M)},
\end{equation}
where $\diam_H(M)$ is the horizontal diameter of the Carnot group.
\end{cor}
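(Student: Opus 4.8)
The plan is to obtain Corollary~\ref{c:carnot2-intro} as a direct specialization of Proposition~\ref{p:type1iso-intro} to a Carnot group $G$ equipped with its Haar volume $\omega$. The first thing I would check is that the stability hypotheses (\textbf{H1}) and (\textbf{H2}) hold in this setting---this is the content of Section~\ref{s:Carnot}, where the transverse bundle $\ver$ is taken to be the left-translation of the higher layers $V_2\oplus\cdots\oplus V_s$ of the stratified Lie algebra $\mathfrak g=V_1\oplus\cdots\oplus V_s$, with the Haar volume in the role of the invariant volume $\omega$. Granting this, Proposition~\ref{p:type1iso-intro} applies verbatim, and the whole argument reduces to evaluating the three geometric quantities $\theta^\vis$, $\ell$ and $\diam^\mathsf{r}(M)$ appearing in~\eqref{eq:type1iso-intro} for $G$, and substituting into the first inequality.

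The key geometric input is the explicit description of the reduced geodesics. By left-invariance it suffices to describe them through the identity $e$: a reduced covector $\lambda\in U_e^*G^{\mathsf r}$ annihilates $\ver$, i.e.\ has vanishing momentum in the directions $V_2\oplus\cdots\oplus V_s$, and (\textbf{H1}) keeps it there along the flow, so that the normal Hamiltonian equations linearize. Hence the reduced geodesics through $e$ are exactly the horizontal one-parameter subgroups $t\mapsto\exp(tv)$ with $v\in\distr_e=V_1$, which in exponential coordinates are the straight lines filling the $k$-plane $V_1$. I would then record the crucial fact that these lines are globally length-minimizing: the length of $t\mapsto\exp(tv)$ equals the Euclidean length of its projection onto $V_1$, which is a lower bound for the sub-Riemannian distance, so the two coincide and there is no cut point. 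Consequently the cut length is $c\equiv+\infty$ along reduced geodesics, and therefore $\tilde\ell=\ell$ on $U^*G^{\mathsf r}$.

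With this in hand the remaining quantities are immediate. Since $M$ is compact, hence bounded in exponential coordinates, every reduced geodesic is a straight line that leaves $M$ in finite time in both time directions; thus $\ell(-\lambda)<+\infty$ for every reduced $\lambda$, giving $\theta^\vis_q=1$ at every $q\in M$ and so $\theta^\vis=1$, and likewise $\tilde\theta^\vis=1$ because $\tilde\ell=\ell$. The longest reduced geodesic starting from $\partial M$ is then the longest horizontal segment contained in $M$, which is precisely the horizontal diameter, so $\ell=\diam_H(M)$; and because these segments are minimizing, the reduced diameter coincides with it as well, $\diam^\mathsf{r}(M)=\ell=\diam_H(M)$. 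Substituting $\theta^\vis=1$ and $\ell=\diam_H(M)$ into the first inequality of~\eqref{eq:type1iso-intro}, and recalling that for the Haar volume $\omega$ is the Lebesgue measure of $\R^n$ and $\sigma$ is the perimeter measure of geometric measure theory, yields $\sigma(\partial M)/\omega(M)\ge C/\diam_H(M)$ with $C=2\pi|\mathbb S^{k-1}|/|\mathbb S^{k}|$, which is the claim (the second inequality of~\eqref{eq:type1iso-intro} gives the same bound).

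I expect the only genuine obstacle to be the step establishing that the reduced geodesics are the horizontal straight lines and that they are globally minimizing with infinite cut time, since everything else is a bookkeeping of the definitions of $\theta^\vis$, $\ell$ and $\diam^\mathsf{r}(M)$. This minimality is exactly what forces the two notions of ``longest reduced geodesic'' ($\ell$ and $\diam^\mathsf{r}(M)$) to collapse onto the single metric quantity $\diam_H(M)$, and what makes both visibility angles equal to $1$; fortunately, in the Carnot setting this follows from the linearization of the Hamiltonian flow at zero vertical momentum together with the elementary lower bound of the distance by the projection onto $V_1$, all of which is developed in Section~\ref{s:Carnot}.
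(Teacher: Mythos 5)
Your proposal is correct and follows essentially the same route as the paper, which deduces the corollary by specializing Proposition~\ref{p:type1iso-intro} using the facts established in Section~\ref{s:Carnot} (Proposition~\ref{p:CarnotOK} for (\textbf{H1})--(\textbf{H2}) and the characterization of reduced geodesics as left-translates of horizontal lines), so that $\theta^\vis=\tilde\theta^\vis=1$ and $\ell=\diam^\mathsf{r}(M)=\diam_H(M)$. Your explicit justification that these lines are globally minimizing (via the lower bound on the distance by the Euclidean length of the projection onto the first layer) is a detail the paper leaves implicit, and it is exactly the right argument.
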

This inequality is not sharp even in the Euclidean case, but it is very easy to compute the horizontal diameter for explicit domains. For example, if $M$ is the sub-Riemannian metric ball of radius $R$, then $\diam_H(M)= 2R$. 

\subsection{Remark on change of volume}

Fix a sub-Riemannian structure $(N,\distr,g)$, a compact set $M$ with piecewise $C^{1,1}$ boundary and a complement $\ver$ such that (\textbf{H1}) holds. Now assume that, for some choice of volume form $\omega$, also (\textbf{H2}) is satisfied, so that we can carry on with the reduction procedure and all our results hold. One can derive the analogous of Propositions~\ref{p:hardylike-intro}, \ref{p:p-hardy-intro}, \ref{p:lowerbound-intro}, \ref{p:type1iso-intro} for any other volume $\omega' = e^\varphi \omega$, with $\varphi \in C^\infty(M)$. In all these results, it is sufficient to multiply the r.h.s. of the inequalities by the \emph{volumetric constant} $0< \alpha \leq 1$ defined as $\alpha := \frac{\min e^\varphi}{\max e^\varphi}$, and indeed replace $\omega$ with $\omega' = e^\varphi \omega$ in Propositions~~\ref{p:hardylike-intro} and \ref{p:p-hardy-intro}, $\sigma$ with $\sigma' = e^\varphi \sigma$ in Proposition~\ref{p:type1iso-intro}, and the sub-Laplacian $\Delta_\omega$ with $\Delta_{\omega'} = \Delta_\omega + \langle d \varphi,\nabla_H \,\cdot \rangle$ in Proposition~\ref{p:lowerbound-intro}. 
Analogously, one can deal with the Corollaries~\ref{c:carnot1-intro} and \ref{c:carnot2-intro} about Carnot groups.

This remark allows, for example, to obtain results for (sub-)Riemann\-ian weighted measures. This is particularly interesting in the genuinely sub-Riemannian setting since, in some cases, the volume satisfying (\textbf{H2}) might not coincide with the intrinsic Popp one.
 
\subsection{Remark on rigidity}

The sharpness results of Propositions~\ref{p:sharpness-lowerbound-intro} and \ref{p:sharp-iso-intro} hold for hemispheres of (sub-)Riemann\-ian structures associated with Riemannian submersions of the sphere with totally geodesic fibers, which have been completely classified in \cite{Escobales}. The only case which is not covered in these propositions is the so-called octonionic Hopf fibration (OHF) $\mathbb{S}^7 \hookrightarrow \mathbb{S}^{15} \to \mathbb{OP}^1$, which to our best knowledge has not yet been studied from the sub-Riemannian point of view, and for which explicit expressions for the sub-Laplacian do not appear in the literature. It is however likely that the sharpness results of Propositions~\ref{p:sharpness-lowerbound-intro} and \ref{p:sharp-iso-intro} hold also for the hemisphere $\mathbb S_+^{15}$ of the sub-Riemannian OHF.

Finally, concerning the universal lower bound of Proposition~\ref{p:lowerbound-intro}, Croke proved the following rigidity result in the Riemannian case (see \cite[Thm. 16]{croke-iso}). As we already remarked, the lower bound \eqref{eq:lambda1-intro} is non-trivial if and only if all geodesics starting from points of $M$ hit the boundary at some finite time (i.e.\ $\theta^\vis =1$). If, furthermore, every such geodesic minimizes distance up to the point of intersection with the boundary  (i.e.\ $\tilde{\theta}^\vis =1$), then we have equality in \eqref{eq:lambda1-intro} if and only if $M$ is an hemisphere of the round sphere. See also \cite{croke-lower} for a more general rigidity result. The following question is thus natural.

\vspace{.7em}

\textbf{Open question.}
Are the hemispheres of the CHF, QHF, and possibly OHF, the only domains on compact sub-Riemannian manifolds tamed by a foliation with totally geodesic leaves (see Section~\ref{s:foliations}) where the lower bound of Proposition~\ref{p:lowerbound-intro} is attained?

\subsection{Afterwords and further developments}

Despite its broad range of applications in Riemannian geometry and its Finsler generalizations \cite{YZ-FinslerSantalo}, only a few works used Santal\'o formula in the hypoelliptic setting, all of them in the specific case of Carnot groups \cite{montefalcone,pansu} or 3D Sasakian structures \cite{CY-Iso}. It is interesting to notice that, in \cite{pansu}, Pansu was able to use Santal\'o formula in pairs with minimal surfaces to eliminate the diameter term in Corollary~\ref{c:carnot2-intro} and obtain his celebrated isoperimetric inequality. In our general setting, this is something worth investigating. 

The study of spectral properties of hypoelliptic operators is an active area of research. Many results are available for the complete spectrum of the sub-Laplacian on \emph{closed} manifolds (with no boundary conditions). We recall \cite{BB-SU2,BW-CR,BW-QHF} for the case of $\mathrm{SU}(2)$, CHF and QHF. 
Furthermore, in \cite{cht15}, one can find the spectrum of the ``flat Heisenberg case'' (a compact quotient of the Heisenberg group) together with quantum ergodicity results for 3D contact sub-Riemannian structures.
Lower bounds for the first (non-zero) eigenvalue of the sub-Laplacian on closed foliation, under curvature-like assumptions, appeared in \cite{bk14} (see also \cite{BaudoinlecturenotesIHP} for a more general statement).

Concerning the Dirichlet spectrum on Riemannian manifolds with boundary, a classical reference is \cite{chavelbook-eigen}.
In the sub-Riemannian setting, we are aware of results for the sum of Dirichlet eigenvalues \cite{Strich-sum} by Strichartz and related spectral inequalities \cite{Lep-spec} by Hannson and Laptev, both for the case of the Heisenberg group.  To our best knowledge, Proposition~\ref{p:lowerbound-intro} is the first sharp universal lower bound for the first Dirichlet eigenvalue in the sub-Riemannian setting and in particular for non-Carnot structures. 

The study of Hardy's inequalities, already in the Euclidean setting, ranges across the last century and continues to the present day (see \cite{barbatis, bm98, ekl14} and references therein). The sub-Riemannian case is more recent, for an account of the known result we mention the works for Carnot groups of Capogna, Danielli and Garofalo (see e.g. \cite{cdg94, dg11}).

Poincar\'e inequalities are strictly connected to Hardy's ones. On this subject the literature is again huge, we already mentioned the works of Croke and Derdzinski concerning the Riemannian case \cite{croke-iso, croke-iso-4D, croke-lower}. Finally, see \cite{iv15} for results on CR and QC manifolds under Ricci curvature assumptions in the spirit of the Lichnerowicz-Obata theorem.


In this paper we focused mostly on foliations, where our results are sharp. For Carnot groups, Corollaries \ref{c:carnot1-intro} and \ref{c:carnot2-intro} appeared in \cite{montefalcone} and are not sharp. Let us consider for simplicity the 3D Heisenberg group, with coordinates $(x,y,z) \in \R^3$. A relevant class of domains for the Dirichlet eigenvalues problem are the ``Heisenberg cubes'' $[0,\varepsilon] \times [0,\varepsilon] \times [0,\varepsilon^2]$, obtained by non-homogeneous dilation of the unit cube $[0,1]^3$. These represent a fundamental domain for the quotient $\mathbb{H}_3 / \varepsilon \Gamma$ of the 3D Heisenberg group $\mathbb{H}_3$ by the (dilation of the) integer Heisenberg subgroup $\Gamma$ (a lattice). This is the basic example of \emph{nilmanifold}, (we thank R. Montgomery for pointing out this example).
For these fundamental domains, the first Dirichlet eigenvalue is unknown. However, we mention that for any Carnot group the reduction technique developed here can be further improved leading to a $\lambda_1$ estimate for cubes, via the technique sketched in Appendix~\ref{sec:closed-geodesics}.)

\subsection{Structure of the paper}

In Section~\ref{s:srg} we recall some basic definitions about sub-Riemannian geometry and sub-Laplacians. In Section~\ref{s:prelconstr} we introduce some preliminary constructions concerning integration on vector bundles that we need for the reduction procedure. In Section~\ref{sec:sets} we prove the main result of the paper, namely the reduced Santal\'o formula. Section~\ref{s:examples} is devoted to examples, and contains the general class of structures where our construction can be carried out. Finally, in Section~\ref{s:applications} we apply the reduced Santal\'o formula to prove Poincar\'e, Hardy, and isoperimetric-type inequalities.

\section{Sub-Riemannian geometry}\label{s:srg}

We give here only the essential ingredients for our analysis; for more details see \cite{nostrolibro,montgomerybook,rifford2014sub}. A sub-Riemannian manifold is a triple $(M,\distr,g)$, where $M$ is a smooth, connected manifold of dimension $n \geq 3$, $\distr$ is a vector distribution of constant rank $k \leq n$ and $g$ is a smooth metric on $\distr$. We assume that the distribution is bracket-generating, that is 
\begin{equation}\label{eq:bracket-gen}
\spn\{[X_{i_1},[X_{i_2},[\ldots,[X_{i_{m-1}},X_{i_m}]]]]\mid m \geq 1\}_q = T_q M, \qquad \forall q \in M,
\end{equation}
for some (and thus any) set $X_1,\ldots,X_k \in \Gamma(\distr)$ of local generators for $\distr$. 

A \emph{horizontal curve} $\gamma : [0,T] \to \R$ is a Lipschitz continuous path such that $\dot\gamma(t) \in \distr_{\gamma(t)}$ for almost any $t$. Horizontal curves have a well defined length
\begin{equation}
\ell(\gamma) = \int_0^T \sqrt{g(\dot\gamma(t),\dot\gamma(t))}dt.
\end{equation}
Furthermore, the \emph{sub-Riemann\-ian distance} is defined by:
\begin{equation}
\d(x,y) = \inf\{\ell(\gamma)\mid \gamma(0) = x,\, \gamma(T) = y,\, \gamma \text{ horizontal} \}.
\end{equation}
By the Chow-Rashevskii theorem, under the bracket-generating condition, $\d$ is finite and continuous. Sub-Riemann\-ian geometry includes the Riemannian one, when $\distr = TM$.

\subsection{Sub-Riemannian geodesic flow}

\emph{Sub-Riemann\-ian geodesics} are horizontal curves that locally minimize the length between their endpoints. Let $\pi : T^*M \to M$ be the cotangent bundle. The \emph{sub-Riemann\-ian Hamiltonian} $H : T^*M \to \R$ is
\begin{equation}
H(\lambda) := \frac{1}{2}\sum_{i=1}^k \langle \lambda, X_i \rangle^2, \qquad \lambda \in T^*M,
\end{equation}
where $X_1,\ldots,X_k \in \Gamma(\distr)$ is any local orthonormal frame and $\langle \lambda, \cdot \rangle $ denotes the action of covectors on vectors. Let $\sigma$ be the canonical symplectic $2$-form on $T^*M$. The \emph{Hamiltonian vector field} $\vec{H}$ is defined by $\sigma(\cdot, \vec{H}) = dH$. Then the Hamilton equations are
\begin{equation}\label{eq:Hamiltoneqs}
\dot{\lambda}(t) = \vec{H}(\lambda(t)).
\end{equation}
Solutions of~\eqref{eq:Hamiltoneqs} are called \emph{extremals}, and their projections $\gamma(t) := \pi(\lambda(t))$ on $M$ are smooth geodesics. The \emph{sub-Riemann\-ian geodesic flow} $\phi_t \in T^*M \to T^*M$ is the flow of $\vec{H}$. Thus, any initial covector $\lambda \in T^*M$ is associated with a geodesic $\gamma_\lambda(t) = \pi\circ\phi_t(\lambda)$, and its speed $\|\dot\gamma(t)\|= 2H(\lambda)$ is constant. The \emph{unit cotangent bundle} is
\begin{equation}
U^*M  =\{\lambda  \in T^*M \mid  2H(\lambda) = 1 \}.
\end{equation}
It is a fiber bundle with fiber $U_q^* M = \mathbb{S}^{k-1}\times \R^{n-k}$. For $\lambda \in U_q^* M$, the curve $\gamma_\lambda(t)$ is a \emph{length-parametrized} geodesic with length $\ell(\gamma|_{[t_1,t_2]}) = t_2-t_1$.

\begin{rmk}
There is also another class of minimizing curves, called \emph{abnormal}, that might not follow the Hamiltonian dynamic of~\eqref{eq:Hamiltoneqs}. Abnormal geodesics do not exist in Riemannian geometry, and they are all trivial curves in some basic but popular classes of sub-Riemann\-ian structures (e.g.\ fat ones). Our construction takes in account only the normal sub-Riemann\-ian geodesic flow, hence abnormal geodesics are allowed, but ignored. Some hard open problems in sub-Riemann\-ian geometry are related to abnormal geodesics \cite{AAA-openproblems,montgomerybook,Sard-prop}.
\end{rmk}

\subsection{The intrinsic sub-Laplacian}

Let $(M,\distr,g)$ be a compact sub-Riemann\-ian manifold with piecewise $C^{1,1}$ boundary $\partial M$, and $\omega \in \Lambda^n M$ be any smooth volume form (or a density, if $M$ is not orientable). We define the \emph{Dirichlet energy functional} as
\begin{equation}
E(f) = \int_M 2 H(d f)\, \omega, \qquad f \in C^\infty_0(M).
\end{equation}
The Dirichlet energy functional induces the operator $-\Delta_{\omega}$ on $L^2(M,\omega)$.
Its Friedrichs extension is a non-negative self-adjoint operator on $L^2(M,\omega)$ that we call the \emph{Dirichlet sub-Laplacian}.
Its domain is the space $H^1_0(M)$, the closure in the $H^1(M)$ norm of the space $C^\infty_0(M)$ of smooth functions that vanish on $\partial M$. Since $\|\nabla_H f \|^2 = 2H( df)$, for smooth functions we have
\begin{equation}
\Delta_\omega f = \dive_\omega( \nabla_H f), \qquad \forall f \in C^\infty_0(M),
\end{equation}
where the divergence is computed w.r.t.\ $\omega$, and $\nabla_H$ is the horizontal gradient defined by \eqref{eq:horgrad}. The spectrum of $-\Delta_\omega$ is discrete and positive,
\begin{equation}
0 < \lambda_1(M) \le \lambda_2(M) \le \ldots \to +\infty .
\end{equation}
In particular, by the min-max principle we have
\begin{equation}\label{eq:minmax}
	\lambda_1(M) = \inf \left\lbrace E(f) \:\bigg|\: f\in C^\infty_0(M), \quad \int_M |f|^2\, \omega = 1 \right\rbrace.
\end{equation}


\section{Preliminary constructions}\label{s:prelconstr}

We discuss some preliminary constructions concerning integration on vector bundles that we need for the reduction procedure. In this section $\pi: E \to M$ is a rank $k$ vector bundle on an $n$ dimensional manifold $M$. For simplicity we assume $M$ to be oriented and $E$ to be oriented (as a vector bundle). If not, the results below remain true replacing volumes with densities. We use coordinates $x$ on $O \subset M$ and $(p,x) \in \R^k \times \R^n$ on $U=\pi^{-1}(O)$ such that the fibers are $E_{q_0} = \{(p,x_0)\mid p \in \R^k\}$. In a compact notation we write, in coordinates, $dp = dp_1\wedge \ldots \wedge dp_k$ and $dx = dx_1\wedge \ldots \wedge dx_n$.

\subsection{Vertical volume forms}\label{sec:vertical_forms}

Consider the fibers $E_q \subset E$ as embedded submanifolds of dimension $k$. For each $\lambda \in E_q$, let $\Lambda^k (T_\lambda E_{q})$ be the space of alternating multi-linear functions on $T_\lambda E_{q}$. The space
\begin{equation}
\Lambda^k_{\mathrm{v}} (E):= \bigsqcup_{\lambda \in E} \Lambda^k( T_\lambda E_{\pi(\lambda)} )
\end{equation}
defines a rank $1$ vector bundle $\Pi: \Lambda^k_{\mathrm{v}} (E) \to E$, such that $\Pi(\eta) = \lambda$ if $\eta \in \Lambda^k( T_\lambda E_{\pi(\lambda)} )$.

To see this, choose coordinates $(p,x) \in \R^k \times \R^n$ on $U=\pi^{-1}(O)$ such that the fibers are $E_{q_0} = \{(p,x_0)\mid p \in \R^k\}$. Thus the vectors $\partial_{p_1},\ldots,\partial_{p_k}$ tangent to the fibers $E_q$ are well defined. The map $\Psi: \Pi^{-1}(U) \to U \times \R$, defined by $\Psi(\eta)= (\Pi(\eta), \eta(\partial_{p_1},\ldots,\partial_{p_k}))$ is a bijection. Suppose that $(U',p',x')$ is another chart, and similarly $\Psi': \Pi^{-1}(U') \to U' \times \R$. Then, on $\Pi^{-1}(U'\cap U) \times \R$ we have $\Psi' \circ \Psi^{-1}(\lambda, \alpha) = (\lambda,\det (\partial q' /\partial q))$. Finally, we apply the vector bundle construction Lemma \cite[Lemma 5.5]{LeeDG}.
	
\begin{definition}
A smooth, strictly positive section $\nu \in \Gamma(\Lambda^k_{\mathrm{v}} (E))$ is called a \emph{vertical volume form on $E$}. In particular, the restriction $\nu_q:= \nu|_{E_q}$ of a vertical volume form defines a measure on each fiber $E_q$. 
\end{definition}
\begin{lemma}[Disintegration 1]
\label{l:vert-split}
Fix a volume form $\Omega \in \Lambda^{n+k}(E)$ and a volume form $\omega \in \Lambda^n(M)$ on the base space. Then there exists a unique vertical volume form $\nu \in \Lambda^{k}_\mathrm{v}(E)$ such that, for any measurable set $D' \subseteq E$ and measurable $f:D \to \R$,
\begin{equation}\label{eq:dis1}
\int_{D'} f\, \Omega = \int_{\pi(D')} \left[\int_{D_q'} f_q\, \nu_q \right] \omega(q), \qquad f_q:=f|_{E_q}, \quad D_q':= E_q \cap D'.
\end{equation}
If, in coordinates, $\Omega = \Omega(p,x) dp \wedge dx$ and $\omega = \omega(x) dx$, then
\begin{equation}
\nu|_{(p,x)} = \frac{\Omega(p,x)}{\omega(x)} dp.
\end{equation}
\end{lemma}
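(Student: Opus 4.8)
Lemma (Disintegration 1) — proof plan.

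The plan is to establish existence and uniqueness of the vertical volume form $\nu$ by first constructing it locally in a chart via the explicit formula $\nu|_{(p,x)} = \frac{\Omega(p,x)}{\omega(x)}\,dp$, then verifying that this local definition is chart-independent so that it glues to a global section of $\Lambda^k_{\mathrm v}(E)$, and finally checking that the disintegration identity \eqref{eq:dis1} holds. I would begin by fixing an adapted chart $(p,x)\in\R^k\times\R^n$ on $U=\pi^{-1}(O)$ as in the bundle construction above, so that the fiber coordinates $\partial_{p_1},\dots,\partial_{p_k}$ are well defined and $dp=dp_1\wedge\cdots\wedge dp_k$ trivializes $\Lambda^k_{\mathrm v}(E)$ over $U$ via the map $\Psi$. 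Writing $\Omega=\Omega(p,x)\,dp\wedge dx$ and $\omega=\omega(x)\,dx$ with $\omega(x)>0$, I would \emph{define} $\nu$ on $U$ by the stated coordinate expression; since $\Omega$ and $\omega$ are positive volume forms, the ratio $\Omega/\omega$ is smooth and strictly positive, so $\nu|_U$ is a genuine vertical volume form.

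The key step is to show this definition does not depend on the chart, which is what makes the local pieces patch together. Suppose $(U',p',x')$ is a second adapted chart. Because both charts have fibers of the form $\{x=\text{const}\}$, the transition respects the fibration: $x'$ is a function of $x$ alone, while $p'$ may depend on both. Under such a transition, the top form transforms by the full Jacobian determinant $\det\!\big(\partial(p',x')/\partial(p,x)\big)$, which factors as $\det(\partial p'/\partial p)\cdot\det(\partial x'/\partial x)$ because $\partial x'/\partial p=0$. I would then observe that $\omega$ transforms precisely by $\det(\partial x'/\partial x)$, and that the vertical trivialization $dp$ transforms by the complementary factor $\det(\partial p'/\partial p)$ — this is exactly the cocycle computed in the bundle construction, namely $\Psi'\circ\Psi^{-1}(\lambda,\alpha)=(\lambda,\det(\partial q'/\partial q))$. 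Dividing $\Omega$ by $\omega$ cancels the base Jacobian and leaves precisely the transformation law that $\nu$ must satisfy as a section of $\Lambda^k_{\mathrm v}(E)$. This confirms $\frac{\Omega(p,x)}{\omega(x)}\,dp$ is well defined globally, and uniqueness is immediate since the formula is forced pointwise in every chart.

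Finally I would verify the integral identity \eqref{eq:dis1}. Working first for $D'$ contained in a single chart, the left-hand side $\int_{D'}f\,\Omega=\int f(p,x)\,\Omega(p,x)\,dp\,dx$ is an ordinary Lebesgue integral, and Fubini's theorem lets me integrate in $p$ first over the fiber slice $D_q'$ and then in $x$ over $\pi(D')$; substituting $\Omega(p,x)=\tfrac{\Omega(p,x)}{\omega(x)}\omega(x)$ identifies the inner integral as $\int_{D_q'}f_q\,\nu_q$ and the outer measure as $\omega(q)$, giving exactly the right-hand side. The general case follows by a partition-of-unity argument subordinate to a cover by adapted charts, using linearity of both sides and the chart-independence just established to ensure the pieces assemble consistently. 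The only mild technical point — the main obstacle, such as it is — is bookkeeping the fibered structure of the transition maps to see the Jacobian factorization cleanly; once that is in hand, everything is a direct consequence of Fubini and the bundle construction lemma already cited.
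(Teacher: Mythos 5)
Your proposal is correct and follows essentially the same route as the paper's proof: define $\nu$ by the coordinate formula, check chart-independence (which you verify explicitly via the block-triangular Jacobian factorization for fibered transitions, where the paper simply asserts it), factor $\Omega = \nu \wedge \omega$ in coordinates, and conclude by Fubini. Your added detail on the cocycle cancellation and the partition-of-unity globalization fills in steps the paper leaves implicit, but the underlying argument is the same.
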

\begin{proof}
The last formula does not depend on the choice of coordinates $(p,x)$ on $E$. So we can use this as a definition for $\nu$. 
Moreover, in coordinates,
\begin{equation}
\Omega|_{(p,x)} = \Omega(p,x) dp \wedge dx = \left(\frac{\Omega(p,x)}{\omega(x)} dp\right) \wedge (\omega(x) dx).
\end{equation}
Both uniqueness and~\eqref{eq:dis1} follow from the definition of integration on manifolds and Fubini theorem.
\end{proof}

\subsection{Vertical surface forms} \label{sec:vert-surface}

Let $E'\subset E$ be a corank $1$ sub-bundle of $\pi:E \to M$. That is, a submanifold $E' \subset E$ such that $\pi|_{E'}: E' \to M$ is a bundle, and the fibers $E'_q:= \pi^{-1}(q)\cap E' \subset E_q$ are diffeomorphic to a smooth hypersurface $C \subset \R^k$. As a matter of fact, we will only consider the cases in which $C$ is a cylinder or a sphere.

Fix a smooth volume form $\Omega \in \Lambda^{n+k}(E)$. The \emph{Euler vector field} is the generator of homogenous dilations on the fibers $\lambda \mapsto e^\alpha \lambda$, for all $\alpha \in \R$. In coordinates $(p,x)$ on $E$ we have $\euler = \sum_{i=1}^n p_i \partial_{p_i}$. If $\euler$ is transverse to $E'$ we induce a volume form on $E'$ by $\mu:=\iota_\euler \Omega$.

In this setting, a volume form $\mu \in \Lambda^{n+k-1}(E')$ is called a \emph{surface form}.
For any vertical volume form $\nu\in\Lambda^k_{\mathrm{v}}(E)$, we define a measure on the fibers $E'_q$ as $\eta_q=\iota_\euler \nu|_{E_q}$.
 With an abuse of language, we will refer to such measures as \emph{vertical surface forms}.

\begin{lemma}[Disintegration 2]
\label{l:vert-split-sub}
Fix a surface form $\mu = \iota_\euler \Omega \in \Lambda^{n+k-1}(E')$ and a volume form $\omega \in \Lambda^n(M)$ on the base space. 
For any measurable set $D \subseteq E'$ and measurable $f:D \to \R$,
\begin{equation}\label{eq:dis2}
\int_D f \mu = \int_{\pi(D)} \left[\int_{D_q} f_q\, \eta_q \right] \omega(q), \qquad f_q:=f|_{E'_q}, \quad D_q:= E'_q \cap D.
\end{equation}
Here, $\eta_q = \iota_{\euler} \nu|_{E_q}$ and $\nu$ is the vertical volume form on $E$ defined in Lemma~\ref{l:vert-split}.
\end{lemma}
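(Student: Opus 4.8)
The plan is to reduce~\eqref{eq:dis2} to the already-established disintegration of Lemma~\ref{l:vert-split} by exhibiting a clean wedge factorization of the surface form $\mu$. Since both sides of~\eqref{eq:dis2} are additive over measurable sets and the claim is local over the base (and over the fibers), I would first localize: choose an adapted chart $(p,x)\in\R^k\times\R^n$ on $U=\pi^{-1}(O)$ as in Section~\ref{sec:vertical_forms}, patch via a partition of unity on $M$, and reduce to proving the identity for $D$ contained in a single $U\cap E'$.

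The heart of the argument is an algebraic identity. In each adapted chart, Lemma~\ref{l:vert-split} factors the total-space volume as $\Omega=\hat\nu\wedge\pi^*\omega$, where $\hat\nu$ denotes the $k$-form $\tfrac{\Omega(p,x)}{\omega(x)}\,dp$ representing $\nu$ in coordinates (an honest differential form on $E$, built only from the $dp_i$). I would then apply the derivation property of the contraction, $\iota_\euler(\alpha\wedge\beta)=(\iota_\euler\alpha)\wedge\beta+(-1)^{\deg\alpha}\alpha\wedge(\iota_\euler\beta)$, to this factorization. The key point is that the Euler field $\euler=\sum_i p_i\partial_{p_i}$ is vertical, so it annihilates the purely horizontal form $\pi^*\omega$, i.e.\ $\iota_\euler\pi^*\omega=0$. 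This collapses the identity to
\begin{equation}
\mu=\iota_\euler\Omega=(\iota_\euler\hat\nu)\wedge\pi^*\omega=:\hat\eta\wedge\pi^*\omega.
\end{equation}
Moreover, since $\euler$ is tangent to the fibers $E_q$, contraction commutes with restriction to fibers, whence $\hat\eta|_{E_q}=\iota_\euler(\nu|_{E_q})$, whose restriction to $E'_q$ is exactly the vertical surface form $\eta_q$ of the statement.

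With the decomposition $\mu=\hat\eta\wedge\pi^*\omega$ in hand, I would restrict to $E'$ and conclude by Fubini precisely as in Lemma~\ref{l:vert-split}. Introducing coordinates $(y,x)$ on $E'$, with $y=(y_1,\dots,y_{k-1})$ parametrizing the hypersurface fiber $E'_q\cong C$, the pullback $\pi^*\omega$ restricts to $\omega(x)\,dx$; upon wedging, every component of $\hat\eta|_{E'}$ containing a $dx_j$ is killed, so only the genuinely vertical part of $\hat\eta$ survives, which is $\eta_q$ written in the $y$-coordinates. This yields locally $\mu|_{E'}=\eta_q\wedge\omega(x)\,dx$, and Fubini then gives~\eqref{eq:dis2}.

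The main obstacle, and the step requiring the transversality hypothesis, is the passage to $E'$: one must verify that $\hat\eta|_{E'_q}$ is a genuine nonvanishing volume form on the hypersurface $E'_q$, which is exactly where transversality of $\euler$ to $E'$ enters, and that the orientations and signs induced by the contraction and by the restriction are globally consistent, so that the partition-of-unity patching produces no sign ambiguity. Everything else is the bookkeeping already carried out for Lemma~\ref{l:vert-split}.
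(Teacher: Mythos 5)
Your proposal is correct and follows essentially the same route as the paper: both factor $\mu=\iota_\euler\Omega$ in an adapted chart via the derivation property of the interior product, kill the horizontal term because $\iota_\euler\, dx=0$ (the Euler field is vertical), arrive at the local splitting $\mu=\bigl(\tfrac{\Omega(p,x)}{\omega(x)}\iota_\euler\, dp\bigr)\wedge(\omega(x)\,dx)$, and conclude by Fubini together with the identification $\eta=\iota_\euler\nu$ from Lemma~\ref{l:vert-split}. The extra remarks on partitions of unity and on transversality of $\euler$ to $E'$ are sound bookkeeping but do not change the argument.
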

\begin{proof}
Choose coordinates $(p,x)$ on $E$. As in the proof of Lemma~\ref{l:vert-split}
\begin{align}
 \mu|_{(p,x)} = \iota_{\euler} \Omega|_{(p,x)} & = \Omega(p,x) \left(\iota_{\euler} dp \wedge dx +(-1)^k dp \wedge \iota_{\euler} dx \right) \\
 & = \Omega(p,x) \iota_{\euler} dp \wedge dx \\
& = \left(\frac{\Omega(p,x)}{\omega(x)} \iota_\euler dp \right) \wedge(\omega(x) dx).
\end{align}
Thus \eqref{eq:dis2} holds with $\eta|_{(p,x)} = \frac{\Omega(p,x)}{\omega(x)} \iota_\euler dp $. This, together with the local expression of $\nu$ in Lemma~\ref{l:vert-split}, yields $\eta = \iota_\euler \nu$.
\end{proof}

\begin{example}[The unit cotangent bundle]\label{ex:ucb}
We apply the above constructions to $E = T^*M$ and $E' = U^*M$. In this case $E'_q = U_q^* M$ are diffeomorphic to cylinders (or spheres, in the Riemannian case). Moreover, we set $\Omega = \Theta$, the Liouville volume form, and $\mu = \iota_\euler \Theta$, the Liouville surface form.\footnote{ Let $\theta\in \Lambda^1 (T^*M)$ be the \emph{tautological form} $\theta(\lambda) := \pi^*(\lambda)$.
The \emph{Liouville invariant volume} $\Theta\in \Lambda^{2n} (T^*M)$ is $\Theta := (-1)^{\frac{n(n-1)}2} d\theta\wedge\ldots\wedge d\theta$.
In canonical coordinates $(p,x)$ on $T^*M$ we have $\Theta= dp \wedge dx $.} One can check that $\Theta = d\mu$.

Let $\nu \in \Lambda_{\mathrm{v}}^n (T^*M)$ and $\eta = \iota_\mathfrak{e}\nu$ as in Lemmas~\ref{l:vert-split} and \ref{l:vert-split-sub}. In canonical coordinates, $\Theta = dp\wedge dx$. Then, if $\omega = \omega(x) dx$,
\begin{equation}
	\nu = \frac 1 {\omega(x)} \, dp \quad \text{ and }\quad \eta = \frac 1 {\omega(x)} \sum_{i=1}^n (-1)^{i-1} p_i\,dp_1\wedge\ldots\wedge \widehat{dp_i}\wedge\ldots\wedge dp_n.
\end{equation}
Choose coordinates $x$ around $q_0 \in M$ such that $\partial_{x_1}|_{q_0},\ldots,\partial_{x_k}|_{q_0}$ is an orthonormal basis for the sub-Riemannian distribution $\distr_{q_0}$. In the associated canonical coordinates we have
	\begin{equation}
		U_{q_0}^* M = \{ (p,x_0) \in\R^{2n}\mid p_1^2+\ldots+p_k^2 = 1 \} \simeq \mathbb S^{k-1}\times \R^{n-k}.
	\end{equation}
In this chart, $\eta_{q_0}$ is the $(n-1)$-volume form of the above cylinder times $1/\omega(x_0)$.
\end{example}

\begin{rmk}
This construction gives a canonical way to define a measure on $U^*M$ and its fibers in the general sub-Riemannian case, depending only on the choice of the volume $\omega$ on the manifold $M$. It turns out that this measure is also invariant under the Hamiltonian flow. Notice though that in the sub-Riemannian setting, fibers have infinite volume. 
\end{rmk}

\subsection{Invariance}\label{sec:invariance}

Here we focus on the case of interest where $E \subseteq T^*M$ is a rank $k$ vector sub-bundle and $E' \subset E$ is a corank $1$ sub-bundle as defined in Section~\ref{sec:vert-surface}. We stress that $E'$ is not necessarily a vector sub-bundle, but typically its fibers are cylinders or spheres.

Recall that the sub-Riemannian geodesic flow $\phi_t:T^*M \to T^*M$ is the Hamiltonian flow of $H: T^*M \to \R$. Moreover, in our picture, $M \subset N$ is a compact submanifold with boundary $\partial M$ of a larger manifold $N$, with $\dim M = \dim N = n$.

\begin{definition}\label{def:invariant}
A sub-bundle $E\subseteq T^*M$ is \emph{invariant} if $\phi_t(\lambda) \in E$ for all $\lambda \in E$ and $t$ such that $\phi_t(\lambda) \in T^*M$ is defined. A volume form $\Omega \in \Lambda^{n+k}(E)$ is \emph{invariant} if $\mathcal{L}_{\vec{H}} \Omega = 0$.
\end{definition}
Our definition includes the case of interest for Santal\'o formula, where sub-Riemannian geodesics may cross $\partial M \neq \emptyset$. In other words, $E$ is invariant if the only way to escape from $E$ through the Hamiltonian flow is by crossing the boundary $\pi^{-1}(\partial M)$. Moreover, if $\Omega$ is an invariant volume on an invariant sub-bundle $E$, then $\phi_t^* \Omega = \Omega$.

\begin{lemma}[Invariant induced measures]\label{l:invariance}
Let $E \subseteq T^*M$ be an invariant vector bundle with an invariant volume $\Omega$. Let $E' \subset E$ be a corank $1$ invariant sub-bundle. Let $\euler$ be a vector field transverse to $E'$ and $\mu = \iota_\euler \Omega$ the induced surface form on $E'$. Then $\mu$ is invariant if and only if $[\vec{H},\euler]$ is tangent to $E'$.
\end{lemma}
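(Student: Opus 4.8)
The plan is to compute $\mathcal{L}_{\vec H}\mu$ directly and then read off the invariance condition $\mathcal{L}_{\vec H}\mu=0$ as a tangency statement for $E'$. First I would work on the total space $E$, where $\hat\mu:=\iota_\euler\Omega$ is defined as an $(n+k-1)$-form, and invoke the standard commutation identity of Cartan calculus,
\begin{equation}
\mathcal{L}_{\vec H}\iota_\euler-\iota_\euler\mathcal{L}_{\vec H}=\iota_{[\vec H,\euler]},
\end{equation}
applied to $\Omega$. Since $\Omega$ is invariant by hypothesis, $\mathcal{L}_{\vec H}\Omega=0$, the middle term drops out, and one is left with the clean identity $\mathcal{L}_{\vec H}\hat\mu=\iota_{[\vec H,\euler]}\Omega$ on $E$. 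Here $\vec H$ and $\euler$ are both genuinely tangent to $E$: the former because $E$ is invariant (Definition~\ref{def:invariant}), the latter because $E$ is a vector sub-bundle and $\euler$ generates fiber dilations, which preserve $E$. Thus all the objects live on $E$ and the identity makes sense.

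Next I would restrict to $E'$. Let $j:E'\hookrightarrow E$ be the inclusion, so that $\mu=j^*\hat\mu$. Because $E'$ is invariant, the flow $\phi_t$ preserves $E'$ wherever it does not cross $\pi^{-1}(\partial M)$, hence $\vec H$ is tangent to $E'$ at interior points and restricts to a vector field on $E'$. For a vector field tangent to a submanifold, pullback commutes with the Lie derivative: differentiating $j^*\phi_t^*=(\phi_t|_{E'})^*j^*$ at $t=0$ gives $j^*\mathcal{L}_{\vec H}=\mathcal{L}_{\vec H}j^*$. Applying this to the previous identity yields
\begin{equation}
\mathcal{L}_{\vec H}\mu=j^*\mathcal{L}_{\vec H}\hat\mu=j^*\left(\iota_{[\vec H,\euler]}\Omega\right),
\end{equation}
so invariance of $\mu$ is equivalent to the vanishing of $j^*(\iota_W\Omega)$ with $W:=[\vec H,\euler]$.

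The final step is purely linear-algebraic and is where the equivalence crystallizes. At a point $\lambda\in E'$, the tangent space $T_\lambda E'$ is a hyperplane in the $(n+k)$-dimensional space $T_\lambda E$, and $\Omega$ is a volume form. If $W(\lambda)\in T_\lambda E'$, then for any $v_1,\dots,v_{n+k-1}\in T_\lambda E'$ the $n+k$ vectors $W,v_1,\dots,v_{n+k-1}$ all lie in a hyperplane, hence are linearly dependent and $\Omega(W,v_1,\dots,v_{n+k-1})=0$; so $j^*(\iota_W\Omega)=0$. Conversely, if $W(\lambda)\notin T_\lambda E'$, then $W$ together with a basis of $T_\lambda E'$ spans $T_\lambda E$, so $\Omega(W,v_1,\dots,v_{n+k-1})\neq 0$ and $j^*(\iota_W\Omega)\neq 0$. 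Therefore $j^*(\iota_W\Omega)$ vanishes identically along $E'$ if and only if $W=[\vec H,\euler]$ is tangent to $E'$, which is exactly the claim. The one point that requires genuine care is the commutation of pullback and Lie derivative; it hinges on the invariance (hence tangency) of $\vec H$ along $E'$, and once that is secured the rest is a single Cartan identity together with a dimension count.
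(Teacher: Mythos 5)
Your proof is correct, and since the paper states Lemma~\ref{l:invariance} without proof, your Cartan-calculus argument (the identity $\mathcal{L}_{\vec H}\iota_{\euler}-\iota_{\euler}\mathcal{L}_{\vec H}=\iota_{[\vec H,\euler]}$ applied to the invariant $\Omega$, followed by restriction to $E'$ and the hyperplane dimension count) is exactly the intended one. The care you take with the tangency of $\vec H$ to $E'$ away from $\pi^{-1}(\partial M)$, which justifies commuting $j^*$ with $\mathcal{L}_{\vec H}$, is the right point to flag.
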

In Example~\ref{ex:ucb}, $E = T^*M$ and $E' = U^*M$ are clearly invariant; in particular $\vec{H}$ is tangent to $E'$. By Liouville theorem, $\Omega = \Theta$ is invariant for any Hamiltonian flow Moreover, if the Hamiltonian $H$ is homogeneous of degree $d$ (on fibers), one checks that $[\vec{H}, \euler] = - (d-1)\vec{H}$ and Lemma~\ref{l:invariance} yields the invariance of the Liouville surface measure $\mu = \iota_\euler \Theta$. In particular this holds in Riemannian and sub-Riemannian geometry, with $d=2$.

\section{Santal\'o formula} \label{sec:sets}

\subsection{Assumptions on the boundary}

Let $(N,\distr,g)$ be a smooth connected sub-Rie\-mann\-ian manifold, of dimension $n$, without boundary. We focus on a compact $n$-dimen\-sional submanifold $M$ with piecewise $C^{1,1}$ boundary $\partial M$.

Let $q \in \partial M$ such that the tangent space is well defined. We say that $q$ is a \emph{characteristic point} if $\distr_q \subseteq T_q \partial M$. If $q$ is \emph{non-characteristic}, the \emph{horizontal normal} at $q$ is the unique inward pointing unit vector $\mathbf{n}_q \in \distr_q$ orthogonal to $T_q \partial M \cap \distr_q$. If $q\in\partial M$ is characteristic, we set $\mathbf{n}_q=0$.  We call $C(\partial M)$ the set of characteristic points. The size of  $C(\partial M)$ has been studied in \cite{derridj,balogh2} under various regularity assumptions on $\partial M$. We give a self-contained proof of the negligibilty of $C(\partial M)$, which we need in the following. The $C^{1,1}$ regularity assumption cannot be weakened to $C^{1,\alpha}$, with $0<\alpha<1$, as shown in \cite[Thm. 1.4]{balogh-heis}.
\begin{prop}\label{p:h0}
	Let $\partial M$ be piecewise $C^{1,1}$. Then, the set of characteristic points $C(\partial M)$ has zero measure in $\partial M$.
\end{prop}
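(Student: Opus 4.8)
The plan is to reduce the statement to a local, measure-theoretic assertion about the zero set of a prescribed-gradient equation, and to extract a contradiction from the bracket-generating condition through Rademacher/Alexandrov differentiability. First I would dispose of the junctions of the piecewise $C^{1,1}$ boundary: the set where distinct $C^{1,1}$ faces meet is contained in a finite union of submanifolds of dimension $\le n-2$, hence is $\sigma$-negligible, so it suffices to prove the claim on the relative interior of a single $C^{1,1}$ face. By second countability it is then enough to show that every characteristic point $q_0$ has a neighborhood $U$ with $\sigma(C(\partial M)\cap U)=0$.

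Near such a $q_0$ I would fix an auxiliary Riemannian metric and a local orthonormal horizontal frame $X_1,\dots,X_k$, and note that $q\in C(\partial M)$ iff $df_q$ annihilates $\distr_q$, i.e.\ $X_1f(q)=\dots=X_kf(q)=0$ for a $C^{1,1}$ defining function $f$. Since at $q_0$ the Riemannian normal lies in $\distr_{q_0}^\perp$, I can choose coordinates $(x,t)\in\R^{n-1}\times\R$ with $\partial_t|_{q_0}$ transverse to $T_{q_0}\partial M$ and write $\partial M$ as a graph $t=u(x)$ with $u\in C^{1,1}$; in this chart $\sigma$ is mutually absolutely continuous with Lebesgue measure. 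Writing $X_j=\sum_i a_{ji}\partial_{x_i}+b_j\partial_t$, the characteristic condition becomes the linear system $A(x)\nabla u(x)=B(x)$, where $A=[a_{ji}(x,u(x))]$ has full rank $k$ near $q_0$; equivalently $\nabla u(x)$ is forced to lie in a smoothly varying affine subspace $L(x)\subset\R^{n-1}$.

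The heart of the argument is that this constraint is incompatible, on a set of positive measure, with $\nabla u$ being a genuine gradient, precisely because $\distr$ is bracket-generating. Assume $\sigma(C\cap U)>0$; then $\mathcal L^{n-1}$-a.e.\ point of the parameter set $A(x)\nabla u(x)=B(x)$ is a Lebesgue density point, and by Rademacher applied to the Lipschitz map $\nabla u$ (whose distributional Hessian $D^2u\in L^\infty$ is symmetric) I may also assume $u$ is twice differentiable there with symmetric Hessian. Solving $A_1u_y+A_2u_z=B$ for the first $k$ partials gives $u_y=G(x,u_z)$; since the Lipschitz map $x\mapsto u_y-G(x,u_z)$ vanishes on a full-density set and is differentiable at the density point, its differential vanishes there, expressing the mixed derivatives $\partial_{x_b}u_{y_a}$ through $G$. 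Imposing symmetry of the Hessian then forces the antisymmetric combination $\partial_{y_{a'}}G_a-\partial_{y_a}G_{a'}$, up to lower order terms, to vanish; but this quantity is exactly the Levi form $df_{q_0}([X_a,X_{a'}])$, non-zero for some pair whenever $\distr$ is detected at step $2$ — a contradiction. Hence $\sigma(C\cap U)=0$ wherever the Levi form of $\partial M$ is non-degenerate (as one checks directly in the Heisenberg model, where the prescribed field has non-zero curl).

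The main obstacle is the locus of characteristic points where the Levi form degenerates — higher-step points such as those on the Martinet surface $\{x=0\}$ — since there only third- or higher-order brackets detect the normal, while $u$ has no third derivative to feed into the above computation. I would handle these by stratification: the degeneracy conditions $df([X_i,X_j])=0$ are \emph{first order} in $f$, hence continuous, so they confine the degenerate characteristic points to a closed, lower-dimensional stratum of $\partial M$, at whose points either a higher iterated bracket furnishes a horizontal direction transverse to $T_q\partial M$ (so the point is in fact non-characteristic) or a dimension count shows the stratum to be $\sigma$-null; an induction on the step of $\distr$ then closes this case. Combining the non-degenerate estimate with the negligibility of the degeneracy stratum and of the junction set yields $\sigma(C(\partial M))=0$.
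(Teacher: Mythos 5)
Your ``step two'' argument is sound and is essentially a pointwise variant of the paper's key lemma: where you use Lebesgue density points, the a.e.\ second differentiability of the $C^{1,1}$ graph function, and symmetry of the Hessian, the paper uses the coarea formula to conclude that the tangential gradient of $X(u)$ vanishes a.e.\ on the set where $X(u)$ vanishes; both routes show that if two fields are tangent a.e.\ on a positive-measure subset of $\partial M$, then so is their bracket. Up to that point the two proofs are interchangeable.

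The genuine gap is in your treatment of higher-step points, and it is not a removable technicality, since the proposition is claimed (and used) for structures of step $\geq 3$ such as general Carnot groups and the Martinet one. First, the set $\{q \in C(\partial M) : df_q([X_i,X_j])=0 \text{ for all } i,j\}$ is closed, but there is no reason it should be ``lower-dimensional'': it is the zero set of merely Lipschitz functions with no transversality available, and for an equiregular step-$3$ structure (e.g.\ the Engel group) it is a priori perfectly consistent that $T_q\partial M$ contains the second layer of the flag on a positive-measure set --- ruling this out is exactly the statement you are trying to prove, one step up the flag, so the ``dimension count'' is circular. Second, the alternative ``a higher iterated bracket furnishes a horizontal direction transverse to $T_q\partial M$, so the point is in fact non-characteristic'' misreads the definition: characteristic means $\distr_q \subseteq T_q\partial M$, and transversality of an iterated bracket (which is not horizontal) does not make a point non-characteristic. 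Third, your stated obstruction --- that $u$ has no third derivative to feed into the computation --- is a red herring that the paper's formulation avoids: its claim is stated for \emph{arbitrary} smooth vector fields $X,Y$ tangent a.e.\ on a positive-measure set, and its proof only ever differentiates the Lipschitz function $Y(u)$ once more; hence it can be re-applied with $Y$ an already-processed iterated bracket, and iterating along the bracket flag never requires more than two derivatives of $u$, because the tangency established at each stage is again a first-order statement about a Lipschitz function. Reformulating your density-point lemma for arbitrary smooth (not just horizontal) fields and iterating it would close the gap; the stratification as proposed does not.
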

\begin{proof}
Without loss of generality we assume that $N=\mathbb R^n$ and that locally $\partial M$ is the graph of a $C^{1,1}$ function $f: \R^{n-1} \to \R$. Let also $u(x,z) = z-f(x)$, so that locally $\partial M = \{(x,z) \in \R^{n-1}\times \R \mid u (x,z)=0 \}$.

Let $\tilde{A} \subset \R^{n-1}$ be measurable with positive measure, and let $A = \{(x,f(x)) \mid x \in \tilde{A} \}\subseteq \partial M$. We claim that if $X,Y$ are smooth vector fields (not necessarily horizontal), tangent to $\partial M$ a.e.\ on $A$, then also $[X,Y]$ is tangent to $\partial M$ a.e.\ on $A$. Notice that $X$ is tangent to $\partial M$ a.e.\ on $A$ if and only if $X(u)(x,f(x)) = 0$ for a.e.\ $x \in \tilde{A}$. Consider the Lipschitz function $\xi(x):= X(u)(x,f(x))$. As a consequence of coarea formula \cite{MR3409135}, we have
\begin{equation}
\int_{\tilde{A}} |\nabla \xi(x)|\, dx = \int_{\R} \mathcal{H}^{n-2}(\tilde{A} \cap \xi^{-1}(t) )\, dt = 0,
\end{equation}
where $|\nabla \xi|$ is the norm of the Euclidean gradient of $\xi : \R^{n-1} \to \R$, and $\mathcal{H}^{n-2}$ is the Hausdorff measure. In particular, $\nabla \xi = 0$ a.e.\ on $\tilde{A}$. Since $Y$ is tangent to $\partial M$ a.e.\ on $A$, the above identity yields that $Y(X(u)) =0$ a.e.\ on $A$. A similar argument shows that also $Y(X(u)) =0$ a.e.\ on $A$. Since $[X,Y](u)(x,f(x)) = X(Y(u))(x,f(x))-Y(X(u))(x,f(x))$ for a.e.\ $x \in \R^{n-1}$, we have that $[X,Y]$ is tangent to $\partial M$ a.e.\ on $A$, as claimed. 

Assume by contradiction that $C(\partial M)$ has positive measure. In particular, applying the above claim to any pair $X,Y \in \Gamma(\distr)$, and $A = C(\partial M)$, we obtain that $[X,Y]$ is tangent to $\partial M$ a.e. on $C(\partial M)$. Since $[X,Y] \in \Gamma(TM)$, we can apply the claim a finite number of times, obtaining that any iterated Lie bracket of elements of $\Gamma(\distr)$ is tangent to $\partial M$ a.e. on $C(\partial M)$. This contradicts the bracket-generating assumption.
\end{proof}

\subsection{(Sub-)Riemannian Santal\'o formula}

For any covector $\lambda \in U^*_q M$, the \emph{exit length} $\ell(\lambda)$ is the first time $t \geq 0$ at which the corresponding geodesic $\gamma_\lambda(t)= \pi\circ\phi_t(\lambda)$ leaves $M$ crossing its boundary, while $\tilde{\ell}(\lambda)$ is the smallest between the exit and the cut length along $\gamma_\lambda(t)$. Namely
\begin{align}
\ell(\lambda) & = \sup\{ t\ge 0\mid \gamma_\lambda(t)\in M \}, \\
\tilde \ell(\lambda) & = \sup \{t\le \ell(\lambda)\mid \gamma_\lambda|_{[0,t]} \text{ is minimizing} \}.
\end{align}
We also introduce the following subsets of the unit cotangent bundle $\pi: U^*M \to M$:
\begin{align}
{U^+\partial M} & = \left\{ \lambda\in U^*M|_{\partial M}  \mid \langle \lambda, \textbf{n}\rangle > 0  \right\}, \\
	 U^\vis M & = \{ \lambda\in U^*M \mid \ell(-\lambda) <+\infty\} , \\
	 \tilde U^\vis M & = \{ \lambda\in U^\vis M \mid \tilde\ell(-\lambda) = \ell(-\lambda)\}.
\end{align}
Some comments are in order. The set $U^+\partial M$ consists of the unit covectors $\lambda \in \pi^{-1}(\partial M)$ such that the associated geodesic enters the set $M$ for arbitrary small $t>0$. The \emph{visible set} $U^\vis M$ is the set of covectors that can be reached in finite time starting from $\pi^{-1}(\partial M)$ and following the geodesic flow. If we restrict to covectors that can be reached \emph{optimally} in finite time, we obtain the \emph{optimally visible set} $\tilde U^\vis M$ (see Fig.~\ref{f:fig1}).

\begin{lemma}\label{l:cuttime}
	The cut-length $c:U^*M\to (0,+\infty]$ is upper semicontinuous (and hence measurable).
	Moreover, if any couple of distinct points in $M$ can be joined by a minimizing non-abnormal geodesic, $c$ is continuous.
\end{lemma}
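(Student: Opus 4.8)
The plan is to treat the two claims separately: to get upper semicontinuity by a soft argument using only the continuity of the distance and of the flow, and then to get continuity by upgrading this to lower semicontinuity under the extra hypothesis. For the upper semicontinuity I would work with the characterization
\[
c(\lambda) = \sup\{t \ge 0 \mid \d(\gamma_\lambda(0),\gamma_\lambda(t)) = t\},
\]
which holds because $\d(\gamma_\lambda(0),\gamma_\lambda(t)) \le t$ along a length-parametrized geodesic, with equality exactly when $\gamma_\lambda|_{[0,t]}$ is minimizing, and minimality of sub-arcs makes the set on the right an interval. Let $\lambda_n \to \lambda$ and, after a subsequence, $c(\lambda_n) \to s = \limsup_n c(\lambda_n)$. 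For any $t < s$ one has $t < c(\lambda_n)$ for large $n$, hence $\d(\gamma_{\lambda_n}(0),\gamma_{\lambda_n}(t)) = t$; since $\phi_t$ is continuous (smoothness of $\vec H$) and $\d$ is continuous (Chow--Rashevskii), letting $n\to\infty$ gives $\d(\gamma_\lambda(0),\gamma_\lambda(t)) = t$, so $t \le c(\lambda)$. As $t<s$ is arbitrary, $s \le c(\lambda)$; compactness of $M$ keeps the relevant geodesics in a fixed compact set for bounded times, so all limits are legitimate. Measurability is then immediate, as upper semicontinuous functions are Borel.

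For the continuity statement it remains to prove $\liminf_n c(\lambda_n) \ge c(\lambda)$. I would argue by contradiction: after a subsequence $c(\lambda_n) \to s < c(\lambda)$, and I fix $t$ with $s < t < c(\lambda)$, writing $x_n = \gamma_{\lambda_n}(0) \to x$ and $z_n = \gamma_{\lambda_n}(t) \to z := \gamma_\lambda(t)$. For large $n$ we have $t > c(\lambda_n)$, so $\gamma_{\lambda_n}|_{[0,t]}$ is not minimizing; by hypothesis I may then choose a minimizing \emph{non-abnormal} competitor $\sigma_n$ from $x_n$ to $z_n$, with unit initial covector $\xi_n$ and length $\ell_n = \d(x_n,z_n) < t$. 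The idea is to let $n \to \infty$ and show that this competitor is forced to coincide with $\gamma_\lambda|_{[0,t]}$, which is absurd because it is strictly shorter.

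This is where I expect the real work. Using the standard compactness of minimizers of uniformly bounded length issued from a compact set (Ascoli--Arzel\`a plus lower semicontinuity of length, at the level of both curves and initial covectors), a subsequence of $\xi_n$ converges to some $\xi \in U_x^*M$ and $\sigma_n$ converges to a minimizing geodesic $\sigma = \pi\circ\phi_\cdot(\xi)$ from $x$ to $z$ of length $\lim_n \ell_n = \d(x,z) = t$. Since $t < c(\lambda)$, the geodesic $\gamma_\lambda|_{[0,t]}$ is the \emph{unique} minimizer from $x$ to $z$ and $z$ is not conjugate to $x$ along it (standard properties of times strictly below the cut time, see \cite{nostrolibro,rifford2014sub}). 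Uniqueness forces $\sigma = \gamma_\lambda|_{[0,t]}$; the non-abnormal minimizer supplied by the hypothesis must therefore be this very curve, so $\gamma_\lambda|_{[0,t]}$ is non-abnormal and admits a single normal lift, namely $\lambda$, whence $\xi = \lambda$. Setting $\eta_n := (\ell_n/t)\,\xi_n$, homogeneity of the flow gives $\pi\circ\phi_t(\eta_n) = \pi\circ\phi_{\ell_n}(\xi_n) = z_n = \pi\circ\phi_t(\lambda_n)$, with $\eta_n \to \lambda$. Finally, non-conjugacy of $z$ makes the map $\mathcal{E}_t\colon (y,\eta)\mapsto (y,\pi\circ\phi_t(\eta))$ a local diffeomorphism near $(x,\lambda)$; as $(x_n,\eta_n)$ and $(x_n,\lambda_n)$ both converge to $(x,\lambda)$ and share the image $(x_n,z_n)$, local injectivity yields $\eta_n = \lambda_n$ for large $n$, contradicting $2H(\eta_n) = (\ell_n/t)^2 < 1 = 2H(\lambda_n)$.

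The soft direction is routine; the main obstacle is the lower semicontinuity, and within it the step identifying the limiting competitor together with its covector. This is exactly where the non-abnormality hypothesis is indispensable: it guarantees that the unique minimizer $\gamma_\lambda|_{[0,t]}$ has a unique normal lift, so that the competitors' covectors genuinely converge to $\lambda$ and can be compared with $\lambda_n$ through the locally invertible exponential at the non-conjugate point $z$. Without this assumption an abnormal competitor could persist in the limit and force $c$ to jump downward, leaving only upper semicontinuity.
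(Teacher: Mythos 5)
Your overall architecture is sound and is essentially the classical Klingenberg--Chavel argument that the paper itself invokes by citation to \cite{chavelbook}: upper semicontinuity from the closed condition $\d(\gamma_\lambda(0),\gamma_\lambda(t))=t$, and lower semicontinuity by producing, for $t$ slightly above $c(\lambda_n)$, a strictly shorter competitor whose covector is forced to collide with $\lambda_n$ through the locally invertible exponential at a non-conjugate point. The upper semicontinuity part is correct as written.

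There is, however, one genuine gap in the lower semicontinuity argument, at precisely the point you label ``standard compactness of minimizers \dots at the level of both curves and initial covectors''. In the sub-Riemannian setting the fiber $U_{x}^*M\simeq\mathbb{S}^{k-1}\times\R^{n-k}$ is \emph{not} compact, and the initial covectors of a converging sequence of normal minimizers need not subconverge: they can escape to infinity (this happens, for instance, for normal minimizers approaching the Martinet abnormal line). So ``a subsequence of $\xi_n$ converges to some $\xi\in U_x^*M$'' is not a standard fact but the crux of the matter. The missing step is a renormalization argument: if $|\xi_n|\to\infty$ in some fiber norm, then $\hat\xi_n:=\xi_n/|\xi_n|$ subconverges to a covector $\hat\xi\neq 0$ with $2H(\hat\xi)=\lim_n |\xi_n|^{-2}=0$, and the rescaled extremal lifts converge to a non-trivial abnormal lift of the limit curve $\sigma=\gamma_\lambda|_{[0,t]}$; this contradicts the non-abnormality of the unique minimizer from $x$ to $z$, which is where the hypothesis enters, \emph{before} the uniqueness-of-the-normal-lift step you emphasize. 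Once boundedness is secured, the rest of your argument --- the identification $\xi=\lambda$ via uniqueness of the normal lift of a strictly normal geodesic, and the contradiction $2H(\eta_n)<1=2H(\lambda_n)$ from local injectivity at the non-conjugate point --- is correct, and it is exactly the mechanism behind the dichotomy (conjugate point or double minimizer) that the paper's proof singles out as the key ingredient in the absence of abnormal minimizers.
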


\begin{proof}
	The result follows as in \cite[Thm. III.2.1]{chavelbook}.
	We stress that the key part of the proof of the second statement is the fact that, in absence of non-trivial abnormal minimizers, a point is in the cut locus of another if and only if (i) it is conjugate along some minimizing geodesic or (ii) there exist two distinct minimizing geodesics joining them.
\end{proof}

\begin{lemma}\label{l:semicont}
	The exit length $\ell:U^+\partial M \to (0, +\infty]$ is lower semicontinuous (and hence measurable).
	Moreover, $\tilde\ell:U^+\partial M \to (0, +\infty]$ is measurable.	
\end{lemma}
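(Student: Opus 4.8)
The plan is to prove lower semicontinuity of $\ell$ directly, and then obtain measurability of $\tilde{\ell}$ by expressing it through already-established semicontinuous functions. First I would fix a covector $\lambda_0 \in U^+\partial M$ with $\ell(\lambda_0) = T < +\infty$ (the case $T = +\infty$ being vacuous for lower semicontinuity) and a value $s < T$. By definition of the exit length, the geodesic $\gamma_{\lambda_0}(t) = \pi \circ \phi_t(\lambda_0)$ stays in the interior $\mathrm{int}(M)$ for all $t \in [0,s]$, since $s < T = \sup\{t \ge 0 \mid \gamma_{\lambda_0}(t) \in M\}$ and the boundary is not crossed before $T$. The set $\phi_{[0,s]}(\lambda_0)$ is a compact subset of the open set $\pi^{-1}(\mathrm{int}(M)) = T^*(\mathrm{int}(M))$. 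By continuous dependence of the Hamiltonian flow $\phi_t$ on initial conditions (the Hamiltonian $H$ is smooth), there is a neighborhood $\mathcal{U}$ of $\lambda_0$ in $U^+\partial M$ such that $\phi_t(\lambda) \in \pi^{-1}(\mathrm{int}(M))$ for all $\lambda \in \mathcal{U}$ and all $t \in [0,s]$. For such $\lambda$ this forces $\ell(\lambda) \ge s$, whence $\liminf_{\lambda \to \lambda_0} \ell(\lambda) \ge s$. Since $s < T$ was arbitrary, we conclude $\liminf_{\lambda \to \lambda_0} \ell(\lambda) \ge T = \ell(\lambda_0)$, which is precisely lower semicontinuity. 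Lower semicontinuous functions are Borel measurable.

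For $\tilde{\ell}$, I would use the identity $\tilde{\ell}(\lambda) = \min\{\ell(\lambda), c(\lambda)\}$, where $c$ is the cut length. We have just shown that $\ell$ is lower semicontinuous, and by Lemma \ref{l:cuttime} the cut length $c$ is upper semicontinuous; both are therefore measurable. The pointwise minimum of two measurable functions is measurable, so $\tilde{\ell}$ is measurable.

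The main subtlety I anticipate is the regularity of the boundary: since $\partial M$ is only piecewise $C^{1,1}$, the argument that $\phi_{[0,s]}(\lambda_0) \subset T^*(\mathrm{int}(M))$ relies only on the openness of $\mathrm{int}(M)$ and not on smoothness of $\partial M$, so the low regularity does not actually obstruct the continuity-of-flow argument; the delicate point is rather that $\ell$ is genuinely only lower semicontinuous, not continuous, because a nearby geodesic can graze the boundary tangentially and exit much later (or, in the limit, leave $M$ exactly at a point where it becomes tangent), so one cannot hope for an upper bound on $\liminf$. This is why only measurability, not continuity, is claimed, and it is consistent with the Riemannian prototype in \cite[Thm. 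III.2.1]{chavelbook}. A second point worth checking is that $\tilde{\ell}$ really equals $\min\{\ell,c\}$ under our sign conventions, namely that the cut length is measured along the same forward geodesic; this follows directly from the definitions of $\ell$ and $\tilde{\ell}$ given just before the lemma.
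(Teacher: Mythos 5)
Your overall strategy is the same as the paper's: both arguments rest on continuous dependence of the Hamiltonian flow on the initial covector, combined with the dichotomy between the open set $\mathrm{int}(M)$ and the closed set $\partial M$; and your second half ($\tilde\ell=\min\{\ell,c\}$ together with Lemma~\ref{l:cuttime} and the measurability of a pointwise minimum) is exactly the paper's argument. The only difference in the first half is packaging: the paper takes a sequence $\lambda_n\to\lambda_0$ realizing the $\liminf$, lets the trajectories converge uniformly on $[0,\delta]$, and uses that $\gamma_n(\ell(\lambda_n))\in\partial M$ with $\partial M$ closed, whereas you fix $s<\ell(\lambda_0)$ and push a whole compact arc through an open set.

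There is, however, one step in your version that is false as written. Since $\lambda_0\in U^+\partial M$, the initial point $\gamma_{\lambda_0}(0)=\pi(\lambda_0)$ lies on $\partial M$, so $\phi_{[0,s]}(\lambda_0)$ is \emph{never} contained in $\pi^{-1}(\mathrm{int}(M))$: your compact-inside-open argument breaks at $t=0$. Moreover, with the paper's definition $\ell(\lambda)=\sup\{t\ge 0 \mid \gamma_\lambda(t)\in M\}$, the inequality $s<\ell(\lambda_0)$ does not by itself force $\gamma_{\lambda_0}(t)\in\mathrm{int}(M)$ for $0<t\le s$ (the geodesic may touch $\partial M$ before its exit length). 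The first defect is easily repaired, and the repair actually simplifies your proof: under the $\sup$ definition, to conclude $\ell(\lambda)\ge s$ it suffices to exhibit a single time $t\ge s$ with $\gamma_\lambda(t)\in M$, so you only need $\gamma_{\lambda_0}(s)\in\mathrm{int}(M)$ and the continuity of the time-$s$ map $\phi_s$, with no statement about the arc near $t=0$. The second point (a geodesic grazing or running along $\partial M$ at interior times, so that no suitable $s$ with $\gamma_{\lambda_0}(s)\in\mathrm{int}(M)$ is available) is a genuine subtlety, but it is glossed over by the paper's own one-line justification as well, so I would not count it against you; just be aware that neither argument addresses it explicitly.
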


\begin{proof}
	Let $\lambda_0\in U^+\partial M$. Consider a sequence $\lambda_n$ such that $\liminf_{\lambda\rightarrow\lambda_0} \ell(\lambda) = \lim_{n} \ell(\lambda_n)$.
	Then, the trajectories $\gamma_n(t) = \pi\circ \phi_t(\lambda_n)$ for $t\in [0,\ell(\lambda_n)]$ converge uniformly as $n\rightarrow+\infty$ to the trajectory $\gamma_0(t)=\phi_t(\lambda_0)$ for $t\in [0,\delta]$ where $\delta = \lim_n\ell(\lambda_n)$.
	Moreover, by continuity of $\partial M$ and the fact that $\gamma_n(\ell(\lambda_n))\in \partial M$, it follows that $\gamma_0(\delta)\in \partial M$.
	This proves that $\delta\ge \ell(\lambda_0)$, proving the first part of the statement.

	To complete the proof, observe that $\tilde \ell = \min\{\ell, c\}$, which are measurable by the previous claim and Lemma~\ref{l:cuttime}.
\end{proof}

Fix a volume form $\omega$ on $M$ (or density, if $M$ is not orientable). In any case, $\omega$ and $\sigma:=\iota_{\mathbf{n}} \omega$ induce positive measures on $M$ and $\partial M$, respectively.
According to Lemmas \ref{l:vert-split} and \ref{l:vert-split-sub}, these induce measures $\nu_q$ and $\eta_q = \iota_\euler \nu_q$ on $T_q^*M$ and $U_q^*M$, respectively.
\begin{theorem}[Santal\'o formulas]\label{t:santalo}
The visible set $U^\vis M$ and the optimally visible set $\tilde{U}^\vis M$ are measurable. Moreover, for any measurable function $F: U^*M \to \R$ we have
\begin{align}\label{eq:santalo}
\int_{U^\vis M} F \,\liousurf
		& = \int_{\partial M} \left[\int_{U^+_q\partial M}  \left( \int_0^{\ell(\lambda)}  F ( \phi_t(\lambda))  dt\right)  \langle \lambda,{\mathbf{n}_q}\rangle \eta_q(\lambda)\right] \sigma(q), \\
	\label{eq:santalo-tilde} 
\int_{\tilde{U}^\vis M} F \,\liousurf
		& = \int_{\partial M} \left[\int_{U^+_q\partial M}  \left( \int_0^{\tilde{\ell}(\lambda)}  F ( \phi_t(\lambda))  dt\right)  \langle \lambda,{\mathbf{n}_q}\rangle \eta_q(\lambda)\right] \sigma(q).
\end{align}
\end{theorem}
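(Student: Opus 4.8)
The plan is to derive both formulas from a single change of variables in which the geodesic flow parametrizes the visible set starting from the boundary, after which the invariance of the Liouville measure collapses the Jacobian into a boundary flux factor. Throughout I work with the map
\[
\Psi(\lambda,t) := \phi_t(\lambda), \qquad (\lambda,t)\in \mathcal{A}:=\{(\lambda,t)\mid \lambda\in U^+\partial M,\ 0\le t<\ell(\lambda)\},
\]
and its analogue $\tilde{\mathcal A}$ with $\ell$ replaced by $\tilde\ell$. Measurability comes first: using the time-reversal symmetry $\phi_t(-\lambda)=-\phi_{-t}(\lambda)$, a covector $\mu$ lies in $U^\vis M$ precisely when the backward geodesic of $\mu$ reaches $\partial M$ in finite time, i.e.\ $\ell(-\mu)<+\infty$. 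Since $\lambda\mapsto-\lambda$ is smooth and $\ell$, $\tilde\ell=\min\{\ell,c\}$ are measurable by Lemmas~\ref{l:semicont} and~\ref{l:cuttime}, the sets $U^\vis M=\{\mu\mid \ell(-\mu)<\infty\}$ and $\tilde U^\vis M=\{\mu\mid \tilde\ell(-\mu)=\ell(-\mu)<\infty\}$ are measurable.

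Next I would show that $\Psi$ is a bijection from $\mathcal A$ onto $U^\vis M$ up to a null set. Surjectivity and injectivity follow from the first-entry decomposition: given $\mu\in U^\vis M$ with $s:=\ell(-\mu)<\infty$, the time-reversal identity shows $\lambda:=\phi_{-s}(\mu)$ is the unique inward covector in $U^+\partial M$ with $\Psi(\lambda,s)=\mu$ and $s<\ell(\lambda)$, while distinct first-entry segments are disjoint. The degeneracies to discard are: characteristic points, where $\mathbf n_q=0$ and which are $\sigma$-null by Proposition~\ref{p:h0}; and grazing covectors, where $\langle\lambda,\mathbf n_q\rangle=0$ (the geodesic is tangent to $\partial M$), which form a null subset of $W:=U^*M|_{\partial M}$ on which $\vec H$ fails to be transverse. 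Away from these, $\Psi$ is a diffeomorphism.

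The computational core is the pullback $\Psi^*\mu$ of the Liouville surface form $\mu=\iota_\euler\Theta$. Since $\Psi(\cdot,t)=\phi_t\circ\Psi(\cdot,0)$, $\partial_t\Psi=\vec H\circ\Psi$, and $\phi_t^*\mu=\mu$ by invariance of the Liouville surface form (Lemma~\ref{l:invariance} with $H$ homogeneous of degree $2$), the flow-box structure gives $\Psi^*\mu=dt\wedge\beta$, where $\beta=j^*(\iota_{\vec H}\mu)$ is the pullback to $W$ along $j\colon W\hookrightarrow U^*M$. It then remains to identify $\beta=\langle\lambda,\mathbf n_q\rangle\,\eta_q(\lambda)\wedge\sigma(q)$: working in canonical coordinates adapted to $\partial M$ with $\mathbf n$ horizontal, the geodesic velocity $\dot\gamma=\sum_i\langle\lambda,X_i\rangle X_i$ has normal component $g(\dot\gamma,\mathbf n)=\langle\lambda,\mathbf n\rangle$, which is exactly the rate at which $\vec H$ transports the fibers across $\pi^{-1}(\partial M)$; disintegrating over $\partial M$ with the surface measure $\sigma=\iota_{\mathbf n}\omega$ via Lemma~\ref{l:vert-split-sub} then yields the stated factor. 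I expect this identification of the flux form, together with the control of the non-transverse grazing and characteristic null set, to be the main obstacle.

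Finally I would assemble the formula by Fubini:
\[
\begin{aligned}
\int_{U^\vis M}F\,\liousurf
&=\int_{\mathcal A}(F\circ\Psi)\,\Psi^*\mu\\
&=\int_{\partial M}\left[\int_{U^+_q\partial M}\left(\int_0^{\ell(\lambda)}F(\phi_t(\lambda))\,dt\right)\langle\lambda,\mathbf n_q\rangle\,\eta_q(\lambda)\right]\sigma(q),
\end{aligned}
\]
which is~\eqref{eq:santalo}. The identical argument on $\tilde{\mathcal A}$, with upper limit $\tilde\ell(\lambda)$ and image $\tilde U^\vis M$ (the flow-out restricted to times before the cut length), gives~\eqref{eq:santalo-tilde}; no new ingredient is needed, since the invariance and flux computations do not see the choice of upper limit.
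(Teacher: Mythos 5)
Your proposal is correct and follows essentially the same route as the paper: the flow-out map $\Psi(\lambda,t)=\phi_t(\lambda)$ from $\{(\lambda,t)\mid \lambda\in U^+\partial M,\ 0<t<\ell(\lambda)\}$ is a diffeomorphism onto $U^\vis M$ minus a null set, the pullback $\Psi^*\liousurf=\langle\lambda,\mathbf n_q\rangle\,dt\wedge\sigma\wedge\eta$ is computed exactly as in the paper's Lemma~\ref{l:splitting} (invariance of $\liousurf$ under $\phi_t$ plus the flux identity $\iota_{\vec H}\omega=\langle\lambda,\mathbf n\rangle\sigma$ on $T\partial M$), and Fubini assembles the formula, with the $\tilde\ell$ version obtained by truncating at the cut length. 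The only cosmetic difference is that you get measurability of $U^\vis M$ from measurability of $\ell$ (which requires extending Lemma~\ref{l:semicont} from $U^+\partial M$ to all of $U^*M$, where the same semicontinuity argument applies), whereas the paper reads it off from the diffeomorphism itself.
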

\begin{rmk}
	Even if $M$ is compact and hence $\tilde \ell<+\infty$, in general $\tilde U^\vis M\subsetneq U^\vis M$.
	Furthermore, if $\ell<+\infty$ (that is, all geodesics reach the boundary of $M$ in finite time), then $U^\vis M = U^*M$.
	Thus, our statement of Santal\`o formula contains \cite[Thm. VII.4.1]{chavelbook}.
\end{rmk}

\begin{rmk}\label{rmk:noH0}
If $\partial M$ is only Lipschitz and $C(\partial M)$ has positive measure, the above Santal\'o formulas still hold by removing on the left hand side from $U^\vis M$ and $\tilde U^\vis M$ the set $\{\phi_t(\lambda)\mid \pi(\lambda)\in C(\partial M) \text{ and } t\ge 0\}$. Nothing changes on the right hand side as $\sigma(C(\partial M))=0$, since $\sigma = \iota_{\mathbf{n}} \omega$ and $\mathbf{n}$ vanishes on $C(\partial M)$ by definition.
\end{rmk}

\begin{proof}
	Let $A \subset [0,+\infty) \times U^+  \partial M $ be the set of pairs $(t,\lambda)$ such that $0<t< \ell(\lambda)$. 
	By Lemma~\ref{l:semicont} it follows that $A$ is measurable.
	Let also $Z = \pi^{-1}(\partial M)\subset U^\vis M$ which has zero measure in $U^*M$.
	Define $\phi:A \to U^\vis M\setminus Z$ as $\phi(t,\lambda) = \phi_t(\lambda)$.
	This is a smooth diffeomorphism, whose inverse is $\phi^{-1}(\bar \lambda) = (\ell(-\bar\lambda),-\phi_{\ell(-\bar\lambda)}(-\bar\lambda))$. In particular, $U^\vis M$ is measurable.
	Then, using Lemma~\ref{l:splitting} (see below), and Fubini theorem, we have
	\begin{multline}
		\label{eq:santalo-deriv}
		\int_{U^\vis M} F\, \mu 
		= \int_{\phi(A)} F\, \mu = \int_{A} (F\circ \phi)\, \phi^*\mu =\\
		= \int_{\partial M} \left[\int_{U^+_q\partial M}  \left( \int_0^{\ell(\lambda)}  F ( \phi_t(\lambda))  dt\right)  \langle \lambda,{\mathbf{n}_q}\rangle \eta_q(\lambda)\right] \sigma(q),
	\end{multline}
	which proves \eqref{eq:santalo}. Analogously, with $\tilde A = \{ (t,\lambda) \mid 0<t< \tilde\ell(\lambda) \}$ and $\tilde Z = Z \cup \{ \phi_{\tilde\ell(\lambda)} (\lambda) \mid \lambda\in U^+\partial M \}$ the map $\phi: \tilde A \to \tilde U^\vis M\setminus \tilde Z$ is a diffeomorphism with the same inverse.
	Then, the same computations as \eqref{eq:santalo-deriv} replacing $A$ with $\tilde A$ and $Z$ with $\tilde Z$ yield \eqref{eq:santalo-tilde}. \end{proof}

\begin{lemma}
	\label{l:splitting}
The following local identity of elements of $\Lambda^{2n-1}(\R\times U^+\partial M)$ holds
	\begin{equation}
		\phi^* \mu|_{(t,\lambda)} = \langle \lambda, \mathbf{n}_q \rangle\, dt\wedge \sigma \wedge \eta , \qquad \lambda \in U^+\partial M,
	\end{equation}
where, in canonical coordinates $(p,x)$ on $T^*M$
\begin{equation}
\eta = \iota_\euler \nu, \qquad \nu = \frac{1}{\omega(x)} dp, \qquad \sigma = \iota_{\mathbf{n}} \omega, \qquad \omega = \omega(x) dx.
\end{equation}

\end{lemma}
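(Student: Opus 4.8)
The goal of Lemma~\ref{l:splitting} is to compute the pullback $\phi^*\mu$ of the Liouville surface form under the map $\phi(t,\lambda)=\phi_t(\lambda)$, and to show it factors as $\langle\lambda,\mathbf n_q\rangle\,dt\wedge\sigma\wedge\eta$. Since this is a local identity of $(2n-1)$-forms, my plan is to work entirely in well-chosen coordinates and use the structure of the flow. The domain $\R\times U^+\partial M$ has dimension $1+(2n-2)=2n-1$, matching the rank of $\mu\in\Lambda^{2n-1}(U^*M)$, so the pullback is a top form on the domain and it suffices to evaluate both sides on a single frame of vector fields.

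\textbf{Step 1: adapted coordinates.}
First I would fix a non-characteristic $q_0\in\partial M$ and choose coordinates $(p,x)$ on $T^*M$ as in Example~\ref{ex:ucb}, arranged so that $\partial M$ is a coordinate hypersurface near $q_0$ and $\mathbf n_{q_0}$ is the inward unit horizontal normal. In these canonical coordinates $\Theta=dp\wedge dx$, $\mu=\iota_\euler\Theta$, $\nu=\tfrac1{\omega(x)}dp$, $\eta=\iota_\euler\nu$, and $\sigma=\iota_{\mathbf n}\omega$. The key geometric input is that $\phi$ is a diffeomorphism onto $U^\vis M\setminus Z$ (established in the proof of Theorem~\ref{t:santalo}), so $\phi^*\mu$ is a genuine top form that I can compute by its action on $\partial_t$ together with $2n-2$ vector fields on $U^+\partial M$.

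\textbf{Step 2: use flow-invariance to isolate the $dt$ factor.}
The crucial observation is that $\phi_*\partial_t=\vec H$, the Hamiltonian vector field, because $\phi_t$ is the Hamiltonian flow. Since $\mu=\iota_\euler\Theta$ and (as recorded after Lemma~\ref{l:invariance}, with $d=2$) one has $\iota_{\vec H}\mu=\iota_{\vec H}\iota_\euler\Theta$, I would compute $\iota_{\vec H}\mu$ directly. Because $H$ is homogeneous of degree $2$ on fibers, Euler's identity gives $\iota_\euler dH = 2H$, and using $\iota_{\vec H}\Theta=\pm\, dH$ (up to the sign convention from the definition $\sigma(\cdot,\vec H)=dH$) one finds that $\iota_{\vec H}\mu$ is, up to a universal constant, controlled by $dH$, which vanishes on $U^*M$. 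This forces $\phi^*\mu$ to take the form $dt\wedge(\text{a }(2n-2)\text{-form on }U^+\partial M)$, so the content of the lemma reduces to identifying that transverse $(2n-2)$-form at $t=0$, i.e.\ computing $(\iota_{\vec H}\mu)|_{U^+\partial M}$ restricted to $\pi^{-1}(\partial M)$.

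\textbf{Step 3: identify the transverse factor and the weight $\langle\lambda,\mathbf n_q\rangle$.}
Along $\pi^{-1}(\partial M)$ I split the tangent space of $U^*M$ using the coordinate normal direction to $\partial M$. Writing $\vec H=\sum\partial_{p_i}H\,\partial_{x_i}-\partial_{x_i}H\,\partial_{p_i}$, the component of $\vec H$ transverse to $\pi^{-1}(\partial M)$ is governed by the horizontal velocity $\pi_*\vec H(\lambda)$, whose pairing with the conormal to $\partial M$ is exactly $\langle\lambda,\mathbf n_q\rangle$ (this is the standard fact that the horizontal geodesic velocity of the covector $\lambda$ crosses $\partial M$ with normal speed $\langle\lambda,\mathbf n_q\rangle$). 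Contracting $\mu=\iota_\euler\Theta=\iota_\euler(dp\wedge dx)$ with $\vec H$ and then restricting to the boundary, the factor $dx$ decomposes into $dx_{\mathbf n}\wedge d\sigma$, the $dx_{\mathbf n}$ piece is consumed by the normal component of $\vec H$ producing the scalar $\langle\lambda,\mathbf n_q\rangle$, while the $\iota_\euler dp$ piece is precisely $\omega(x)\,\eta$. Repackaging then yields $\phi^*\mu|_{(t,\lambda)}=\langle\lambda,\mathbf n_q\rangle\,dt\wedge\sigma\wedge\eta$.

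\textbf{Main obstacle.}
The delicate point is the bookkeeping in Step~3: correctly splitting $dp\wedge dx$ so that the Euler contraction produces $\eta$ while the Hamiltonian contraction in the transverse direction produces exactly the scalar $\langle\lambda,\mathbf n_q\rangle$ (with the right sign and no spurious factors of $\omega(x)$, which must cancel between $\nu$ and $\sigma$). One must verify carefully that the cross-terms $\iota_\euler dx$ and $dp\wedge\iota_\euler dx$ drop out — just as in the proof of Lemma~\ref{l:vert-split-sub}, where $\iota_\euler dx=0$ because $\euler$ is vertical — and that the orientation conventions make the weight $\langle\lambda,\mathbf n_q\rangle$ appear positive on $U^+\partial M$. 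Once the vertical/horizontal splitting is set up cleanly with $\euler$ vertical and $\mathbf n$ identified with a coordinate direction, the computation is a determinant expansion, but keeping the signs and the $\omega(x)$-weights consistent across the three factors $dt$, $\sigma$, $\eta$ is where the real care is required.
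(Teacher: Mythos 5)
Your overall strategy is the same as the paper's: factor $\phi^*\mu$ as $dt$ wedge a transverse form, use invariance of $\mu$ to evaluate that transverse form at $t=0$ as $\iota_{\vec H}\mu$ on $U^+\partial M$, and then compute $\iota_{\vec H}(\eta\wedge\omega)$, extracting the weight $\langle\lambda,\mathbf n_q\rangle$ from the normal component of $\pi_*\vec H$. However, two points need repair. First, the justification in Step~2 rests on the identity $\iota_{\vec H}\Theta=\pm\,dH$, which is false for degree reasons: $\iota_{\vec H}\Theta$ is a $(2n-1)$-form while $dH$ is a $1$-form (the correct statement is $\iota_{\vec H}\varsigma=dH$ for the symplectic $2$-form $\varsigma$, whence $\iota_{\vec H}\Theta$ is proportional to $dH\wedge\varsigma^{n-1}$). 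Moreover, the conclusion you are trying to reach there is automatic and needs no such argument: $\R\times U^+\partial M$ has dimension $2n-1$, so $\phi^*\mu$ is a top form and $\phi^*\mu=dt\wedge\iota_{\partial_t}\phi^*\mu$ holds identically. Where flow-invariance is genuinely needed is in the next step, to get $t$-independence: $\iota_{\partial_t}\phi^*\mu=\iota_{\vec H}\phi_t^*\mu=\iota_{\vec H}\mu$, which lets you evaluate everything at the boundary.

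Second, in Step~3 the one genuinely non-trivial vanishing is not addressed. Writing $\mu=\eta\wedge\pi^*\omega$ locally, the contraction produces two terms, $(\iota_{\vec H}\eta)\wedge\pi^*\omega$ and $\pm(\iota_{\vec H}\pi^*\omega)\wedge\eta$, and you must show the first one restricts to zero on $T_\lambda U^+\partial M$; the cross-terms you flag ($\iota_\euler dx$ and the like) vanish for the trivial reason that $\euler$ is vertical and are not the issue. The paper kills $(\iota_{\vec H}\eta)\wedge\pi^*\omega$ by a dimension count: a basis of $T_\lambda U^+\partial M$ consists of $n-1$ base directions and $n-1$ fiber directions, so the $n$-form $\pi^*\omega$ must be fed at least one fiber vector and hence vanishes. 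Without this observation the computation does not close, since $\vec H$ has both horizontal and vertical components and there is no a priori reason for its contraction with the fiber factor $\eta$ to drop out. With these two corrections the argument goes through exactly as in the paper.
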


\begin{proof}
	For any $(t,\lambda)\in\R\times U^+\partial M$ let $\{\partial_t, v_1,\ldots, v_{2n-2}\}$ be a set of independent vectors in $T(\R\times U^+\partial M) = T\R \oplus TU^+\partial M$.
	Observe that $\phi^*\mu = dt\wedge (\iota_{\partial_t} \phi^*\mu)$.
	Then, 
	\begin{equation}
		\iota_{\partial_t} \phi^*\mu(v_1,\ldots,v_{2n-2}) 
		= \mu|_{\phi(t,\lambda)} \left(d_{(t,\lambda)}\phi \,\partial_t,d_{(t,\lambda)}\phi \,v_1,\ldots, d_{(t,\lambda)}\phi \,v_{2n-2}\right).
	\end{equation}
	Notice that,
	\begin{itemize}
		\item[(a)] $d_{(t,\lambda)}\phi \,\partial_t = (d_{\lambda}\phi_t) \, \vec H$, this is in fact just $\vec H|_{\phi_t(\lambda)}$,
		\item[(b)] $d_{(t,\lambda)}\phi \, v_i = (d_\lambda \phi_t) v_i$ for any $i=1,\ldots, 2n-2$.
	\end{itemize} 
	Hence it follows that 
	\begin{equation}
		\label{eq:splitting1}
		\iota_{\partial_t} \phi^*\mu = \iota_{\vec H} \phi_t^*\mu=\iota_{\vec H}\mu,
	\end{equation} 
	where in the last passage we used the invariance of $\mu$ (see the discussion below Lemma~\ref{l:invariance}). By Lemma~\ref{l:vert-split-sub} and its proof (in particular see Example~\ref{ex:ucb}) locally $\mu = \eta\wedge \omega$.	By the properties of the interior product, 
	\begin{equation}\label{eq:splitting2}
	\iota_{\vec{H}} \mu = (\iota_{\vec{H}} \eta) \wedge \omega + (\iota_{\vec{H}} \omega) \wedge \eta.
	\end{equation}
	The first term on the r.h.s.\ vanishes: as a $2n-2$ form, its value at a point $\lambda \in U^+\partial M$ is completely determined by its action on $2n-2$ independent vectors of $T_{\lambda} U^+\partial M$. We can choose coordinates such that $\partial M = \{x_n = 0\}$. Then a basis of $T_{\lambda} U^+\partial M$ is given by $\partial_{x_1}, \ldots,\partial_{x_{n-1}}$ and a set of $n-1$ vectors $v_{i} = \sum_{j=1}^n v_i^j \partial_{p_j}$ in $T_{\lambda} U_{\pi(\lambda)}^+\partial M$. Since $\iota_{\vec{H}}\eta$ is a $n-2$ form, then $\omega$ necessarily acts on at least one $v_i$, and vanishes. Now, notice that 
	\begin{equation}
		\label{eq:splitting3}
		\iota_{\vec H}  \omega |_{\lambda}(\cdot) = \omega|_{\pi(\lambda)}( \pi_*\vec H, \pi_*\cdot) 
		= \langle \lambda,\textbf{n}_{\pi(\lambda)} \rangle \, \omega|_{\pi(\lambda)}(\textbf{n}_{\pi(\lambda)}, \pi_*\cdot) 
		= \langle \lambda ,\textbf{n}_{\pi(\lambda)} \rangle \sigma |_{\lambda}(\cdot).
	\end{equation}
	Putting together \eqref{eq:splitting1}, \eqref{eq:splitting2}, and \eqref{eq:splitting3} completes the proof of the statement.
\end{proof}

\subsection{Reduced Santal\'o formula}\label{sec:redsant}

The following reduction procedure replaces the non-compact set $U^\vis M$ in Theorem~\ref{t:santalo} with a compact subset that we now describe.

To carry out this procedure we fix a transverse sub-bundle $\ver\subset TM$ such that $TM=\distr\oplus \ver$. We assume that $\ver$ is the orthogonal complement of $\distr$ w.r.t. to a Riemannian metric $g$ such that $g|_{\distr}$ coincides with the sub-Riemannian one and the associated Riemannian volume coincides with $\omega$. In the Riemannian case, where $\ver$ is trivial, this forces $\omega = \omega_R$, the Riemannian volume. In the genuinely sub-Riemannian case there is no loss of generality since this assumption is satisfied for any choice of $\omega$.

\begin{definition}\label{def:redcotan}
The \emph{reduced cotangent bundle} is the rank $k$ vector bundle $\pi: T^*M^\mathsf{r} \to M$ of covectors that annihilate the vertical directions:
\begin{equation}
T^*M^\mathsf{r} := \left\lbrace \lambda\in T^*M\mid \langle \lambda, v\rangle = 0 \text{ for all } v\in \ver \right\rbrace.
\end{equation}
The \emph{reduced unit cotangent bundle} is $U^*M^\mathsf{r} := U^*M\cap T^*M^\mathsf{r}$.
\end{definition}
Observe that $U^*M^\mathsf{r}$ is a corank $1$ sub-bundle of $T^*M^\mathsf{r}$, whose fibers are spheres $\mathbb S^{k-1}$. If $T^*M^\mathsf{r}$ is invariant in the sense of Definition~\ref{def:invariant}, we can apply the construction of Section~\ref{sec:invariance}. The Liouville volume $\Theta$ on $T^*M$ induces a volume on $T^*M^\mathsf{r}$ as follows.

Let $X_1,\ldots,X_k$ and $Z_1,\ldots,Z_{n-k}$ be local orthonormal frames for $\distr$ and $\ver$, respectively. Let $u_i(\lambda):=\langle \lambda,X_i)$ and $v_j(\lambda):=\langle \lambda, Z_j\rangle$ smooth functions on $T^*M$. Thus
\begin{equation}
T^*M^\mathsf{r} = \{ \lambda \in T^*M \mid v_1(\lambda) = \ldots = v_{n-k}(\lambda) = 0 \}.
\end{equation}
For all $q \in M$ where the fields are defined, $(u,v): T^*_qM\to \R^n$ are smooth coordinates on the fiber and hence $\partial_{u_1}, \ldots, \partial_{u_k}$,$\partial_{v_1},\ldots,\partial_{v_{n-k}}$ are vectors on $T_\lambda(T_q^*M) \subset T_\lambda(T^*M)$ for all $\lambda\in\pi^{-1}(q)$. In particular, the vector fields $\partial_{v_1},\ldots,\partial_{v_{n-k}}$ are transverse to $T^*M^\mathsf{r}$, hence we give the following definition.

\begin{definition}\label{def:redLiouville}
The \emph{reduced Liouville volume} $\Theta^\mathsf{r} \in \Lambda^{n+k}(T^*M^\mathsf{r})$ is
\begin{equation}
\Theta^\mathsf{r}_\lambda := \Theta_\lambda( \underbrace{\ldots,\ldots,\ldots}_{k \text{ vectors}}, \partial_{v_1},\ldots,\partial_{v_{n-k}},\underbrace{\ldots,\ldots,\ldots}_{n \text{ vectors}} ), \qquad \forall\lambda\in T^*M^\mathsf{r}.
\end{equation}
\end{definition}
The above definition of $\Theta^\mathsf{r}$ does not depend on the choice of the local orthonormal frame $\{X_1,\ldots, X_k,Z_1,\ldots,Z_{n-k}\}$ and Riemannian metric $g|_\ver$ on the complement, as long as its Riemannian volume remains the fixed one, $\omega$.
In fact, let $X',Z'$ be a different frame for a different Riemannian metric $g'|_\ver$.
Then\footnote{For simplicity, assume that $\distr$ is orientable as a vector bundle and that $X_1\ldots, X_k$ is an oriented frame.}, $X' = R X$ and $Z' = SX+TZ$ for $R\in \mathrm{SO}(k)$, $T\in \mathrm{SL}(n-k)$ and $S\in \mathrm{M}(k, n)$. One can check that $\partial_{v} = S\partial_{u'}+T\partial_{v'}$ and that
$	\Theta^\mathsf{r} 
	= \Theta(\ldots,\partial_v,\ldots) 
	= \Theta(\ldots,T\partial_{v'},\ldots) 
	= (\Theta^\mathsf{r} )'$,
 where both frames are defined.

\subsection*{Assumptions for reduction} We assume the following hypotheses:
\begin{itemize}
\item[(\textbf{H1})] The bundle $T^*M^\mathsf{r} \subseteq T^*M$ is invariant.
\item[(\textbf{H2})] The \emph{reduced Liouville volume} is invariant, i.e. $\mathcal{L}_{\vec{H}} \Theta^\mathsf{r} = 0$.
\end{itemize}

\begin{rmk}
Assumption (\textbf{H1}) depends only on $\ver$, while (\textbf{H2}) depends also on $\omega$ (since $\Theta^\mathsf{r}$ does). In the Riemannian case, with $\omega = \omega_R$, both are trivially satisfied.
\end{rmk}

Under these assumptions $U^*M^\mathsf{r} = U^*M \cap T^*M^\mathsf{r}$ is an invariant corank $1$ sub-bundle of $T^*M^\mathsf{r}$. Moreover, $\mu^\mathsf{r}= \iota_\euler \Theta^\mathsf{r}$ is an invariant surface form on $U^*M^\mathsf{r}$. This follows from Lemma~\ref{l:invariance} observing that $[\vec H, \euler] = -\vec H$ is tangent to $U^*M^\mathsf{r}$. As in Section~\ref{sec:vertical_forms}, the volume $\Theta^\mathsf{r}\in\Lambda^{n+k}(T^*M^\mathsf{r})$ induces a vertical volume $\nu^\mathsf{r}_q$ on the fibers $T_q^*M^\mathsf{r}$ and a vertical surface form $\eta^\mathsf{r}_q = \iota_\euler \nu_q$ on $U_q^*M^\mathsf{r}$.
As a consequence of Lemmas~\ref{l:vert-split} and \ref{l:vert-split-sub} the latter has the following explicit expression, whose proof is straightforward.
\begin{lemma}[Explicit reduced vertical measure]
	\label{l:eta-r}
	Let $q_0\in M$ and fix a set of canonical coordinates $(p,x)$ such that $q_0$ has coordinates $x_0$ and
	\begin{itemize}
		\item $\{\partial_{x_1}, \dots, \partial_{x_k}\}_{q_0}$ is an orthonormal basis of $\distr_{q_0}$,
		\item $\{\partial_{x_{k+1}}, \dots, \partial_{x_{n}}\}_{q_0}$ is an orthonormal basis of $\ver_{q_0}$.
	\end{itemize}
In these coordinates $\omega|_{x_0} = dx|_{q_0}$. Then $\nu^\mathbf{r}_{q_0} =\mathrm{vol}_{\R^k}$ and $\eta^\mathbf{r}_{q_0} = \mathrm{vol}_{\mathbb S^{k-1}}$.
	In particular, 
	\begin{equation}
		\int_{U_{q_0}^*M^\mathsf{r}} \eta^\mathsf{r}_{q_0} = |\mathbb S^{k-1}|, \qquad \forall q_0 \in M,
	\end{equation}
    where $|\mathbb{S}^{k-1}|$ denotes the Lebesgue measure of $\mathbb{S}^{k-1}$ and $\mathrm{vol}_{\R^k}$, $\mathrm{vol}_{\mathbb{S}^{k-1}}$ denote the Euclidean volume forms of $\R^k$ and $\mathbb{S}^{k-1}$.
\end{lemma}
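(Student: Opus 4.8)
The plan is to reduce everything to a single computation of the reduced Liouville volume $\Theta^\mathsf{r}$ at the chosen point $q_0$, and then to feed the result into the disintegration Lemmas~\ref{l:vert-split} and~\ref{l:vert-split-sub}. First I would exploit the adapted coordinates: the hypotheses guarantee that the orthonormal frames can be chosen so that $X_i|_{q_0}=\partial_{x_i}$ and $Z_j|_{q_0}=\partial_{x_{k+j}}$, so that the fiber functions $u_i(\lambda)=\langle\lambda,X_i\rangle$ and $v_j(\lambda)=\langle\lambda,Z_j\rangle$ used in Definition~\ref{def:redLiouville} restrict, on the whole fiber $T^*_{q_0}M$, to the canonical momenta $u_i=p_i$ and $v_j=p_{k+j}$. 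Since on this fiber the change of fiber coordinates between $(u,v)$ and $p$ is the identity, the transverse vectors $\partial_{v_1},\dots,\partial_{v_{n-k}}$ appearing in the definition of $\Theta^\mathsf{r}$ coincide with $\partial_{p_{k+1}},\dots,\partial_{p_n}$ along the entire fiber $\pi^{-1}(q_0)$.

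Next I would compute the contraction. Writing $\Theta=dp_1\wedge\dots\wedge dp_n\wedge dx$ in canonical coordinates, inserting $\partial_{p_{k+1}},\dots,\partial_{p_n}$ into the middle $n-k$ slots of Definition~\ref{def:redLiouville} amounts to a Laplace expansion along the corresponding columns: each such column carries the single nonzero entry $dp_{k+j}(\partial_{p_{k+j}})=1$ on the diagonal, so the expansion deletes the rows $dp_{k+1},\dots,dp_n$ and the selected rows and columns both equal $\{k+1,\dots,n\}$, producing the overall sign $(-1)^{2\sum_j(k+j)}=+1$. This yields, at $q_0$,
\begin{equation}
\Theta^\mathsf{r}_{q_0}=dp_1\wedge\dots\wedge dp_k\wedge dx .
\end{equation}

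Then I would apply the disintegration. Since by hypothesis $\omega|_{x_0}=dx|_{q_0}$, i.e.\ $\omega(x_0)=1$, Lemma~\ref{l:vert-split} gives on the fiber $T^*_{q_0}M^\mathsf{r}=\{p_{k+1}=\dots=p_n=0\}\simeq\R^k$ the vertical volume $\nu^\mathsf{r}_{q_0}=dp_1\wedge\dots\wedge dp_k=\mathrm{vol}_{\R^k}$. For the surface form, Lemma~\ref{l:vert-split-sub} gives $\eta^\mathsf{r}_{q_0}=\iota_\euler\nu^\mathsf{r}_{q_0}$, where $\euler|_{T^*_{q_0}M^\mathsf{r}}=\sum_{i=1}^k p_i\partial_{p_i}$ is the radial field. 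On the unit sphere $U^*_{q_0}M^\mathsf{r}=\{p_1^2+\dots+p_k^2=1\}$ the radial field is exactly the outward unit normal, and the contraction of the Euclidean volume form with the outward unit normal is by definition the induced Riemannian volume; hence $\eta^\mathsf{r}_{q_0}=\mathrm{vol}_{\mathbb S^{k-1}}$, and integrating gives $|\mathbb S^{k-1}|$.

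The computation is genuinely routine, so the one place requiring care — and where a careless argument could slip — is the first step: one must verify that $\partial_{v_j}=\partial_{p_{k+j}}$ holds along the entire fiber over $q_0$, and not merely at a single covector, since it is this that makes the coefficient of $\Theta^\mathsf{r}$ relative to $dp_1\wedge\dots\wedge dp_k\wedge dx$ constantly equal to $1$ on that fiber and hence compatible with the disintegration Lemma~\ref{l:vert-split}. The sign bookkeeping in the Laplace expansion is the other bit to check, but since the objects of interest are positive measures, even a sign slip would be immaterial.
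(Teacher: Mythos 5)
Your proof is correct and follows exactly the route the paper intends: the paper states the lemma as a ``straightforward'' consequence of Lemmas~\ref{l:vert-split} and~\ref{l:vert-split-sub}, and your argument simply spells out the intended computation, including the one genuinely necessary check that $\partial_{v_j}=\partial_{p_{k+j}}$ along the whole fiber over $q_0$ so that $\Theta^{\mathsf r}=dp_1\wedge\dots\wedge dp_k\wedge dx$ there. Nothing to add.
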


We now state the reduced Santal\'o formulas.
The sets $U^+\partial M^\mathsf{r}$, $U^\vis M^\mathsf{r}$, and $\tilde U^\vis M^\mathsf{r}$ are defined from their unreduced counterparts by taking the intersection with $T^*M^\mathsf{r}$.

\begin{theorem}[Reduced Santal\'o formulas]
	\label{t:santalo-reduced}
	The visible set $U^\vis M^\mathsf{r}$ and the optimally visible set $\tilde{U}^\vis M^\mathsf{r}$ are measurable.
	For any measurable function $F:U^*M^\mathsf{r} \to \R$ we have
	\begin{align}
		\label{eq:santalo-reduced}
		\int_{U^\vis M^\mathsf{r}} F \,\liousurf^\mathsf{r}
			& = \int_{\partial M} \left[\int_{U_q^+\partial M^\mathsf{r}}  \left( \int_0^{\ell(\lambda)}  F ( \phi_t(\lambda))  dt\right)  \langle \lambda,{\mathbf{n}_q}\rangle \eta^\mathsf{r}_q(\lambda)\right] \sigma(q), \\
		\label{eq:santalo-tilde-reduced} 
		\int_{\tilde{U}^\vis M^\mathsf{r}} F \,\liousurf^\mathsf{r}
			& = \int_{\partial M} \left[\int_{U_q^+\partial M^\mathsf{r}}  \left( \int_0^{\tilde{\ell}(\lambda)}  F ( \phi_t(\lambda))  dt\right)  \langle \lambda,{\mathbf{n}_q}\rangle \eta^\mathsf{r}_q(\lambda)\right] \sigma(q).
\end{align}
\end{theorem}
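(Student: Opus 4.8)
The plan is to run the proof of the unreduced Santaló formula (Theorem~\ref{t:santalo}) essentially verbatim on the reduced bundle $T^*M^\mathsf{r}$ in place of $T^*M$, with $\mu^\mathsf{r}=\iota_\euler\Theta^\mathsf{r}$, $\eta^\mathsf{r}_q$, and $U^*M^\mathsf{r}$ replacing $\mu$, $\eta_q$, and $U^*M$. The only two properties of the unreduced objects that the computation in \eqref{eq:santalo-deriv} genuinely exploited were (i) that the flow preserves the ambient cotangent bundle, so that $\phi(t,\lambda)=\phi_t(\lambda)$ is a diffeomorphism onto the visible set minus a null set, and (ii) that the relevant surface form is flow-invariant, which is what licensed the passage $\iota_{\partial_t}\phi^*\mu=\iota_{\vec H}\phi_t^*\mu=\iota_{\vec H}\mu$ inside Lemma~\ref{l:splitting}. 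In the reduced setting (i) is supplied by hypothesis (\textbf{H1}) and (ii) by hypothesis (\textbf{H2}): as already observed below Definition~\ref{def:redLiouville}, $\Theta^\mathsf{r}$ is $\vec H$-invariant and, since $[\vec H,\euler]=-\vec H$ is tangent to $U^*M^\mathsf{r}$, Lemma~\ref{l:invariance} guarantees that $\mu^\mathsf{r}$ is an invariant surface form.

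Concretely, I would first record measurability. The exit and cut lengths are measurable on $U^+\partial M$ by Lemmas~\ref{l:cuttime} and~\ref{l:semicont}, hence so are their restrictions to $U^+\partial M^\mathsf{r}$. Setting $A^\mathsf{r}=\{(t,\lambda)\in[0,+\infty)\times U^+\partial M^\mathsf{r}\mid 0<t<\ell(\lambda)\}$ and $Z^\mathsf{r}=\pi^{-1}(\partial M)\cap U^*M^\mathsf{r}$, hypothesis (\textbf{H1}) ensures $\phi_t(\lambda)\in T^*M^\mathsf{r}$ whenever $\lambda\in U^*M^\mathsf{r}$, so that $\phi\colon A^\mathsf{r}\to U^\vis M^\mathsf{r}\setminus Z^\mathsf{r}$ is a diffeomorphism with the same inverse $\bar\lambda\mapsto(\ell(-\bar\lambda),-\phi_{\ell(-\bar\lambda)}(-\bar\lambda))$ as in Theorem~\ref{t:santalo}. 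In particular $U^\vis M^\mathsf{r}$ is measurable, and the analogous construction with $\tilde\ell$, $\tilde A^\mathsf{r}$ and $\tilde Z^\mathsf{r}$ handles $\tilde U^\vis M^\mathsf{r}$.

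The core step is the reduced analogue of Lemma~\ref{l:splitting}, namely the local identity
\begin{equation}
\phi^*\mu^\mathsf{r}|_{(t,\lambda)}=\langle\lambda,\mathbf{n}_q\rangle\,dt\wedge\sigma\wedge\eta^\mathsf{r},\qquad \lambda\in U^+\partial M^\mathsf{r}.
\end{equation}
I would prove it exactly as Lemma~\ref{l:splitting}: write $\phi^*\mu^\mathsf{r}=dt\wedge\iota_{\partial_t}\phi^*\mu^\mathsf{r}$, use $d\phi\,\partial_t=\vec H$ together with the invariance $\phi_t^*\mu^\mathsf{r}=\mu^\mathsf{r}$ from (\textbf{H2}) to reduce to $\iota_{\partial_t}\phi^*\mu^\mathsf{r}=\iota_{\vec H}\mu^\mathsf{r}$, then disintegrate $\mu^\mathsf{r}=\eta^\mathsf{r}\wedge\omega$ by Lemma~\ref{l:vert-split-sub} applied to the rank-$k$ bundle $T^*M^\mathsf{r}$. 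Expanding $\iota_{\vec H}(\eta^\mathsf{r}\wedge\omega)=(\iota_{\vec H}\eta^\mathsf{r})\wedge\omega\pm\eta^\mathsf{r}\wedge(\iota_{\vec H}\omega)$, the first term vanishes by the dimension count of Lemma~\ref{l:splitting}: at a boundary point only $n-1$ directions tangent to $\partial M$ survive, so the $n$-form $\omega$ is necessarily fed a fiber vector and dies. The second term is computed through $\pi$ as $\iota_{\vec H}\omega|_\lambda=\langle\lambda,\mathbf{n}\rangle\sigma$, since $\pi_*\vec H$ is the geodesic velocity whose horizontal normal component at the boundary equals $\langle\lambda,\mathbf{n}\rangle$. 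With this identity in hand, the change of variables along $\phi$ together with Fubini yields \eqref{eq:santalo-reduced} precisely as in \eqref{eq:santalo-deriv}, and the same computation on $\tilde A^\mathsf{r}$ gives \eqref{eq:santalo-tilde-reduced}.

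The main obstacle is thus not a new computation but verifying that the structural identities survive the reduction. Everything hinges on $\mu^\mathsf{r}$ being flow-invariant, which is exactly where (\textbf{H1}) and (\textbf{H2}) enter, and on checking that the vanishing-term argument and the identity $\iota_{\vec H}\omega=\langle\lambda,\mathbf{n}\rangle\sigma$ of Lemma~\ref{l:splitting} are insensitive to the drop in fiber dimension from $n-1$ (cylinders) to $k-1$ (spheres). Since $\pi\colon T^*M^\mathsf{r}\to M$ remains a submersion onto all of $M$ and $\pi_*\vec H$ is still the full geodesic velocity, these geometric computations carry over unchanged, and the crucial gain is that $U^*M^\mathsf{r}$ is now compact with finite-volume fibers $\mathbb{S}^{k-1}$.
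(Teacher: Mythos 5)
Your proposal is correct and follows essentially the same route as the paper, which proves Theorem~\ref{t:santalo-reduced} precisely by rerunning the argument of Theorem~\ref{t:santalo} with the invariant sub-bundles, volumes, and surface forms replaced by their reduced counterparts; you have simply made explicit the details (the role of (\textbf{H1}) for the diffeomorphism $\phi\colon A^\mathsf{r}\to U^\vis M^\mathsf{r}\setminus Z^\mathsf{r}$, the invariance of $\mu^\mathsf{r}$ via (\textbf{H2}) and Lemma~\ref{l:invariance}, and the dimension count in the reduced analogue of Lemma~\ref{l:splitting}) that the paper leaves to the reader.
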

\begin{proof}The proof follows the same steps as the one of Theorem~\ref{t:santalo} replacing the invariant sub-bundles, volumes, and surface forms with their reduced counterparts.
\end{proof}

\begin{rmk}\label{r:twoflows}
Let $H_{sR}$ be the sub-Riemannian Hamiltonian and $H_{R}$ be the Riemannian Hamiltonian of the Riemannian extension. The two Hamiltonians are (locally on $T^*M$)
\begin{equation}
H_R = \frac{1}{2}\left(\sum_{i=1}^k u_i^2 + \sum_{j=1}^{n-k}v_j^2\right), \qquad H_{sR} = \frac{1}{2} \sum_{i=1}^k u_i^2.
\end{equation}
Let $\phi_t^{sR} = e^{t \vec{H}_{sR}}$ and $\phi_t^R = e^{t\vec{H}_R}$ be their Hamiltonian flows. Since $T^*M^\mathsf{r} = \{\lambda\mid v_1(\lambda) =\ldots=v_{n-k}(\lambda) =0\}$, by assumption (\textbf{H1}) we have
\begin{equation}
H_{sR} = H_R, \quad \text{and}\quad \phi_t^{sR} = \phi_t^R \qquad \text{ on } T^*M^\mathsf{r}.
\end{equation}
In particular, the sub-Riemannian geodesics with initial covector $\lambda \in U^*M^\mathsf{r}$ are also geodesics of the Riemannian extension and viceversa.
\end{rmk}

\section{Examples}\label{s:examples}

\subsection{Carnot groups}\label{s:Carnot}

A \emph{Carnot group $(G,\star)$ of step $m$} is a connected, simply connected Lie group of dimension $n$, such that its Lie algebra $\mathfrak{g} = T_e G$ admits a nilpotent stratification of step $m$, that is
\begin{equation}
\mathfrak{g} = \mathfrak{g}_1 \oplus \ldots \oplus \mathfrak{g}_m,
\end{equation}
with
\begin{equation}
[\mathfrak{g}_1,\mathfrak{g}_j] = \mathfrak{g}_{1+j},\quad \forall 1\leq j\leq m, \quad \mathfrak{g}_m \neq \{0\}, \quad \mathfrak{g}_{m+1} =\{0\}.
\end{equation}
Let $\distr$ be the left-invariant distribution generated by $\mathfrak{g}_1$, and consider any left-invariant sub-Riemannian structure on $G$ induced by a scalar product on $\mathfrak{g}_1$. 

We identify $G \simeq \R^{n}$ with a polynomial product law by choosing a basis for $\mathfrak{g}$ as follows. Recall that the group exponential map,
\begin{equation}
\mathrm{exp}_{G} : \mathfrak{g} \to G,
\end{equation}
associates with $V \in \mathfrak{g}$ the element $\gamma(1)$, where $\gamma: [0,1] \to G$ is the unique integral line starting from $\gamma(0)=0$ of the left invariant vector field associated with $V$. Since $G$ is simply connected and $\mathfrak{g}$ is nilpotent, $\mathrm{exp}_G$ is a smooth diffeomorphism. 

Let $d_j:= \dim\mathfrak{g}_j$. Indeed $d_1 = k$. Let $\{X_{i}^{j}\}$, for $j=1,\ldots,m$ and $i=1,\ldots,d_j$ be an adapted basis, that is $\mathfrak{g}_j = \spn\{X_1^j,\ldots,X_{d_j}^j\}$. In exponential coordinates we identify
\begin{equation}
(x^1,\ldots,x^m) \simeq \mathrm{exp}_G\left( \sum_{j=1}^m \sum_{i=1}^{d_j}  x_i^j X_i^j\right), \qquad x^j \in \R^{d_j}.
\end{equation}
The identity $e \in G$ is the point $(0,\ldots,0) \in \R^n$ and, by the Baker-Cambpell-Hausdorff formula the group law $\star$ is a polynomial expression in the coordinates $(x^1,\ldots,x^m)$.  Finally,
\begin{equation}
X^j_i =\left.\frac{\partial}{\partial x^j_i}\right|_0,
\end{equation}
so that $\distr|_e \simeq \{(x,0,\ldots,0)\mid x \in \R^k\}$ and $\distr_q =  L_{q*} \distr|_e$, where $L_{q*}$ is the differential of the \emph{left-translation} $L_q (p) := q \star p$.

We equip $G$ with the Lebesgue volume of $\R^n$, which is a left-invariant Haar measure.
In order to apply the reduction procedure of Section~\ref{sec:redsant}, let $\mathcal{V}$ be the left-invariant distribution generated by
\begin{equation}
\mathcal{V}|_e:= \mathfrak{g}_2 \oplus \ldots \oplus \mathfrak{g}_m,
\end{equation}
and consider any left-invariant scalar product $g|_{\ver}$ on $\mathcal{V}$. Thus, up to a renormalization, $g = g|_{\distr} \oplus g|_{\ver}$ is a left-invariant Riemannian extension such that $TM = \distr \oplus \ver$ is an orthogonal direct sum and its Riemannian volume coincides with the Lebesgue one.

\begin{prop}\label{p:CarnotOK}
Any Carnot group satisfies assumptions (\textbf{H1}) and (\textbf{H2}).
\end{prop}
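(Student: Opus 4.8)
The plan is to work in the global frame of left-invariant vector fields adapted to the stratification $\mathfrak{g}=\mathfrak{g}_1\oplus\ldots\oplus\mathfrak{g}_m$ and to rephrase both hypotheses in terms of the momentum functions $h_Y(\lambda):=\langle\lambda,Y\rangle$ attached to left-invariant fields $Y$. The two structural inputs I would exploit are the grading relation $[\mathfrak{g}_1,\mathfrak{g}_j]\subseteq\mathfrak{g}_{j+1}$ and the standard identity $\{h_X,h_Y\}=\pm h_{[X,Y]}$ for the canonical Poisson bracket of momentum functions (the global sign, fixed by one's convention, plays no role below). Writing $u_i:=h_{X_i^1}$ for the horizontal momenta, the Hamiltonian is $H=\tfrac12\sum_{i=1}^k u_i^2$, and, since $\ver$ is spanned by $\mathfrak{g}_2\oplus\ldots\oplus\mathfrak{g}_m$, the reduced bundle is $T^*M^\mathsf{r}=\{h_{X_i^j}=0:\ j\ge 2\}$.

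For (\textbf{H1}) I would show that $\vec H$ is tangent to $T^*M^\mathsf{r}$, which by Definition~\ref{def:invariant} gives the invariance (the flow can leave $T^*M^\mathsf{r}$ only through $\pi^{-1}(\partial M)$). Tangency means $\vec H(h_{X_i^j})=0$ on $T^*M^\mathsf{r}$ for every vertical generator $X_i^j$, $j\ge 2$. Computing,
\begin{equation}
\vec H(h_{X_i^j})=\{h_{X_i^j},H\}=\sum_{l=1}^k u_l\,\{h_{X_i^j},u_l\}=\pm\sum_{l=1}^k u_l\,h_{[X_i^j,X_l^1]}.
\end{equation}
By the grading $[X_i^j,X_l^1]\in\mathfrak{g}_{j+1}$, so each $h_{[X_i^j,X_l^1]}$ is a linear combination of layer-$(j+1)$ momenta; since $j+1\ge 3$ (or $\mathfrak{g}_{j+1}=0$ when $j=m$) these are vertical and vanish on $T^*M^\mathsf{r}$. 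Hence $\vec H(h_{X_i^j})=0$ there, proving (\textbf{H1}). The same computation with $j=1$ gives $\vec H(u_i)=0$ on $T^*M^\mathsf{r}$ as well, because $[\mathfrak{g}_1,\mathfrak{g}_1]\subseteq\mathfrak{g}_2$ is vertical; this extra fact is what drives (\textbf{H2}).

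For (\textbf{H2}) I would first identify $\Theta^\mathsf{r}$ explicitly. By Lemma~\ref{l:eta-r} together with the disintegration of Lemma~\ref{l:vert-split}, and because the left-invariant frame $X_1^1,\ldots,X_k^1$ is orthonormal everywhere while $\omega$ is the Haar (= Lebesgue in exponential coordinates) volume, one gets $\Theta^\mathsf{r}=\omega\wedge(du_1\wedge\ldots\wedge du_k)$: the product of Haar on the base with the flat measure in the fibre coordinates $u\cong\mathfrak{g}_1^*$. Since for a top-degree form $\mathcal{L}_{\vec H}\Theta^\mathsf{r}=(\dive_{\Theta^\mathsf{r}}\vec H)\,\Theta^\mathsf{r}$, it suffices to show $\dive_{\Theta^\mathsf{r}}\vec H=0$. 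On $T^*M^\mathsf{r}$ all momenta are constant along the flow ($\vec H(u_i)=0$ and $\vec H(h_{X_i^j})=0$ from the previous step), so in the $(x,u)$ coordinates the restricted field has no $\partial_{u_i}$ component, while $\pi_*\vec H=\sum_i u_i X_i^1$. Therefore
\begin{equation}
\dive_{\Theta^\mathsf{r}}\vec H=\sum_{i=1}^k u_i\,\dive_\omega X_i^1=0,
\end{equation}
each left-invariant $X_i^1$ being divergence-free with respect to Haar (a Carnot group is nilpotent, hence unimodular, so the flow of $X_i^1$ consists of right-translations, which preserve Haar). This gives $\mathcal{L}_{\vec H}\Theta^\mathsf{r}=0$, i.e.\ (\textbf{H2}).

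The main subtlety lies entirely in (\textbf{H2}), and rests on two facts: the horizontal momenta $u_i$ are constant along the reduced flow (so the fibre directions contribute nothing to $\dive_{\Theta^\mathsf{r}}\vec H$), and the residual base drift $\sum_i u_i X_i^1$ is Haar-divergence-free. Constancy of the $u_i$ is precisely $[\distr,\distr]\subseteq\ver$, which holds because $\ver$ is chosen to be spanned by $\mathfrak{g}_2\oplus\ldots\oplus\mathfrak{g}_m$; geometrically it says that reduced geodesics are translates of one-parameter subgroups (straight horizontal lines). The divergence-freeness of $X_i^1$ is unimodularity of $G$, automatic for nilpotent groups. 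The only genuine bookkeeping to verify is the identification of the frame-independent $\Theta^\mathsf{r}$ of Definition~\ref{def:redLiouville} with the product $\omega\wedge du$ globally, which follows by extending Lemma~\ref{l:eta-r} from a single point to all of $M$ via left-invariance of the orthonormal horizontal frame.
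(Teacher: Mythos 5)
Your proof is correct. For (\textbf{H1}) it coincides with the paper's argument: both reduce to the observation that, by the grading, the bracket of a vertical generator with a horizontal one is again vertical, so $\{u_i,v_j\}$ is a combination of the $v_\ell$'s alone and $T^*M^\mathsf{r}=\{v=0\}$ is preserved by the flow. For (\textbf{H2}) you take a genuinely different route. The paper invokes a general Lie-derivative identity (its Appendix A), valid for any choice of $\ver$, namely $\mathcal{L}_{\vec H}\Theta^\mathsf{r}=-\bigl(\sum_{i,j}u_i\,\partial_{v_j}\{u_i,v_j\}\bigr)\Theta^\mathsf{r}=-\bigl(\sum_i u_i\tr(\mathrm{ad}_{X_i}|_{\ver})\bigr)\Theta^\mathsf{r}$, and concludes by nilpotency of the restricted adjoint maps. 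You instead identify $\Theta^\mathsf{r}$ with the product form $\omega\wedge du$ in the frame coordinates (a correct computation, which indeed follows from Definition~\ref{def:redLiouville} and the change of fiber coordinates $p\mapsto(u,v)$, with Jacobian $1/\omega(x)$ by orthonormality of the frame), and then compute $\dive_{\Theta^\mathsf{r}}\vec H$ directly. Your computation exploits the Carnot-specific fact that $[\distr,\distr]\subseteq\ver$, so the horizontal momenta are constants of motion on $T^*M^\mathsf{r}$ and the fiber directions contribute nothing to the divergence, leaving $\sum_i u_i\dive_\omega X_i^1=0$ by unimodularity. What your approach buys is a transparent geometric picture (reduced dynamics = translated straight lines carrying a product measure, preserved by unimodularity); what it gives up is generality: the paper's appendix formula also covers totally geodesic foliations, where $\{u_i,u_j\}$ has nonzero horizontal components $b_{ij}^\ell u_\ell$ and your step ``no $\partial_{u_i}$ component'' fails, yet (\textbf{H2}) still holds. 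Note also that the two endpoints are really the same structural fact in two guises: $\dive_\omega X=-\tr(\mathrm{ad}_{X|_e})$ for a left-invariant field with respect to Haar, so both proofs ultimately rest on the vanishing of traces of adjoint maps in a nilpotent Lie algebra.
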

\begin{proof}
Let $X_1,\ldots,X_k  \in \Gamma(\distr)$ and $Z_1,\ldots,Z_{n-k} \in \Gamma(\ver)$ be a global frame of left-invariant orthonormal vector fields. Let $u_i(\lambda):= \langle \lambda, X_i\rangle$ and $v_j(\lambda):= \langle \lambda, Z_j\rangle$ be smooth functions on $T^*G$. We have the following expressions for the Poisson brackets
\begin{equation}
\{u_i,v_j\} = \sum_{i=1}^k \sum_{\ell=1}^{n-k} d_{ij}^\ell v_\ell, \qquad i=1,\ldots,k, \quad j=1,\ldots,n-k,
\end{equation}
for some constants $d_{ij}^\ell$. We stress that the above expression does not depend on the $u_i$'s, as a consequence of the graded structure. Denoting the derivative along the integral curves of $\vec{H}$ with a dot, we have
\begin{equation}
\dot{v}_j = \{H,v_j\} = \sum_{i=1}^k u_i \{u_i,v_j\} = \sum_{i=1}^k \sum_{\ell=1}^{n-k} u_i d_{ij}^\ell v_\ell.
\end{equation}
Thus, any integral line of $\vec{H}$ starting from $\lambda \in T^*M^\mathsf{r} = \{v_1=\ldots=v_{n-k} =0\}$ remains in $T^*M^\mathsf{r}$ and the latter is invariant. 

To prove the invariance of $\Theta^\mathsf{r}$, consider, for any fixed left-invariant $X \in \Gamma(\distr)$, the adjoint map $\mathrm{ad}_X : \ver|_e \to \ver|_e$, given by $\mathrm{ad}_X(Z) = [X,Z]|_e$. This map is well defined (as a consequence of the graded structure) and nilpotent. In particular $\tr(\mathrm{ad}_{X}) =0$. Thus, we obtain from an explicit computation (see Appendix~\ref{a:computations})
\begin{equation}
\mathcal{L}_{\vec{H}} \Theta^\mathsf{r} = -\left(\sum_{i=1}^k \sum_{j=1}^{n-k} u_i d_{ij}^j \right) \Theta^\mathsf{r} = -\left(\sum_{i=1}^k u_i\tr(\mathrm{ad}_{X_i})\right) \Theta^\mathsf{r} = 0. \qedhere
\end{equation}
\end{proof}

\begin{prop}[Characterization of reduced geodesics for Carnot groups]
The geodesics $\gamma_\lambda(t)$ with initial covector $\lambda \in T_q^*M^\mathsf{r}$ are obtained by left-translation of straight lines, that is, in exponential coordinates,
\begin{equation}
 \gamma_\lambda(t) = q \star (u t, 0,\ldots,0), \qquad u \in \R^k.
\end{equation}
\end{prop}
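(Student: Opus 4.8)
The plan is to integrate the Hamilton equations directly on the reduced bundle, exploiting that by Proposition~\ref{p:CarnotOK} (assumption (\textbf{H1})) the flow starting from $\lambda\in T^*_qM^\mathsf{r}$ never leaves $T^*M^\mathsf{r}$, so that the vertical momenta vanish identically along it: $v_j(\phi_t(\lambda))=0$ for all $t$ and all $j$. Writing $u_i(t):=u_i(\phi_t(\lambda))$, the projected geodesic satisfies $\dot\gamma_\lambda(t)=\sum_{i=1}^k u_i(t)\,X_i(\gamma_\lambda(t))$, since $H=\tfrac12\sum_i u_i^2$. Thus everything reduces to showing that the coefficients $u_i(t)$ are in fact constant.

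The key computation is the evolution of the $u_i$ along $\vec{H}$. As in the proof of Proposition~\ref{p:CarnotOK}, I would use that the Poisson bracket of two momentum functions is the momentum function of the corresponding Lie bracket. Since $X_\ell,X_i\in\mathfrak{g}_1$, the graded structure gives $[X_\ell,X_i]\in\mathfrak{g}_2\subseteq\ver$, so $\{u_\ell,u_i\}=\sum_m c_{\ell i}^m v_m$ is a linear combination of the vertical momenta $v_m$ alone. Consequently
\[
\dot u_i=\{H,u_i\}=\sum_{\ell=1}^k u_\ell\{u_\ell,u_i\}=\sum_{\ell,m} c_{\ell i}^m u_\ell v_m,
\]
which vanishes along the flow in $T^*M^\mathsf{r}$ because every $v_m$ does. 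Hence $u_i(t)\equiv u_i:=\langle\lambda,X_i\rangle$ and $\gamma_\lambda$ is the integral curve of the fixed left-invariant horizontal field $X:=\sum_{i=1}^k u_i X_i\in\mathfrak{g}_1$ issuing from $q$.

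It then remains to identify this integral curve. The integral curve of a left-invariant vector field $X$ through $e$ is the one-parameter subgroup $t\mapsto\mathrm{exp}_G(tX)$, and left-invariance (i.e.\ $X(q)=L_{q*}X(e)$) propagates this to $\gamma_\lambda(t)=L_q(\mathrm{exp}_G(tX))=q\star\mathrm{exp}_G(tX)$. Finally, since $X=\sum_{i=1}^k u_i X_i$ lies in the first layer $\mathfrak{g}_1$, its exponential coordinates are $\mathrm{exp}_G(tX)=(tu_1,\dots,tu_k,0,\dots,0)=(ut,0,\dots,0)$ with $u=(u_1,\dots,u_k)$, which yields the claimed formula $\gamma_\lambda(t)=q\star(ut,0,\dots,0)$.

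The only genuinely substantive step is the constancy of the $u_i$; everything else is standard Lie-group bookkeeping. The crux is that this constancy relies on two facts working together: the nilpotent grading, which forces brackets of horizontal fields into the vertical layers and hence expresses $\{u_\ell,u_i\}$ purely through the $v_m$; and assumption (\textbf{H1}), which guarantees these $v_m$ stay zero along the reduced flow. I would emphasize that the argument is special to the reduced bundle: off $T^*M^\mathsf{r}$ the $u_i$ genuinely evolve (one recovers the usual non-trivial Carnot geodesics), so the straight-line conclusion holds precisely because we restrict to $\lambda\in T^*_qM^\mathsf{r}$.
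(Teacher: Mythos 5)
Your proposal is correct and follows essentially the same route as the paper's proof: vanishing of the vertical momenta along the reduced flow, constancy of the $u_i$ from $\{u_\ell,u_i\}=\sum_m c_{\ell i}^m v_m$ (a consequence of the grading), and identification of the resulting integral curve of $\sum_i u_i X_i$ via left-translation and the group exponential. No gaps.
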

\begin{proof}
Let $X_1,\ldots,X_k  \in \Gamma(\distr)$ and $Z_1,\ldots,Z_{n-k} \in \Gamma(\ver)$ be a global frame of left-invariant orthonormal vector fields. Let $u_i(\lambda):= \langle \lambda, X_i\rangle$ and $v_j(\lambda):= \langle \lambda, Z_j\rangle$ be smooth functions on $T^*G$. Let $u \in \R^k$. The extremal $\lambda(t) = \phi_t(\lambda)$, with initial covector $\lambda=(q,u,0)$ satisfies $v \equiv 0$ by Proposition~\ref{p:CarnotOK} and, as a consequence of the graded structure,
\begin{equation}
\dot{u}_i = \{H,u_i\} = \sum_{j=1}^k u_j \{u_j,u_i\} = \sum_{j=1}^k \sum_{\ell=1}^{n-k} u_j c_{ji}^\ell v_{\ell} = 0,
\end{equation}
In particular $\lambda(t) = (q(t),u,0)$. Moreover the geodesic $\gamma_\lambda(t) =\pi(\lambda(t))$ satisfies
\begin{equation}
\dot\gamma_{\lambda}(t) = \sum_{i=1}^k u_i X_i(\gamma_\lambda(t)).
\end{equation}
Since the $u_i$'s are constants, $\gamma_{\lambda}(t)$ is an integral curve of $\sum_{i=1}^k u_i X_i$ starting from $q$. Then $L_{q}^{-1} \gamma_{\lambda}(t)$ is an integral curve of $\sum_{i=1}^k u_i L_{q *}^{-1} X_i =\sum_{i=1}^k u_i X_i$ starting from the identity. By definition of exponential coordinates
\begin{equation}
\gamma_{\lambda}(t) = q \star \exp_G\left(t \sum_{i=1}^k u_i X_i\right) \simeq q \star (ut,0). \qedhere
\end{equation}
\end{proof}

\begin{rmk}
In the case of a step $2$ Carnot group, the group law is linear when written in exponential coordinates. In fact, for a fixed left-invariant basis $X_1,\ldots,X_k \in \Gamma(\distr)$ and $Z_1,\ldots,Z_{n-k} \in \Gamma(\ver)$ it holds
\begin{equation}
[X_i,X_j] = \sum_{\ell=1}^{n-k} c_{ij}^\ell Z_\ell, \qquad c_{ij}^\ell \in \R.
\end{equation}
By the Baker-Campbell-Hausdorff formula, $(x,z) \star (x',z') = (x' + x,z' + z + f(x,x'))$, where
\begin{align}
f(x,x')_\ell = \frac{1}{2} \sum_{i,j=1}^k x_i c_{ij}^\ell x'_j, \qquad \ell =1,\ldots,n-k.
\end{align}
As a consequence, the geodesics $\gamma_\lambda(t)$ with initial covector $\lambda \in T_q^*M^\mathsf{r}$ span the set $q\star \distr|_e$. The latter is not an hyperplane, in general, when $q \neq e$ and the step $m > 2$.
\end{rmk}

\begin{example}[Heisenberg group]
The $(2d+1)$-dimensional Heisenberg group $\mathbb{H}_{2d+1}$ is the sub-Riemannian structure on $\mathbb{R}^{2d+1}$ where $(\distr,g)$ is given by the following set of global orthonormal fields
\begin{equation}
X_i:= \partial_{x_i} - \frac{1}{2} \sum_{i=1}^{2d} J_{ij} x_j \partial_z, \qquad J = \begin{pmatrix}
0 & \mathbb{I}_n \\
-\mathbb{I}_n & 0
\end{pmatrix}, \qquad i=1,\ldots,2d,
\end{equation}
written in coordinates $(x,z) \in \R^{2d} \times \R$. The distribution is bracket-generating, as $[X_i,X_j] = J_{ij} \partial_z$. These fields generate a stratified Lie algebra, nilpotent of step $2$, with
\begin{equation}
\mathfrak{g}_1 = \spn\{X_1,\ldots,X_{2d}\}, \qquad \mathfrak{g}_2 = \spn\{\partial_z\}.
\end{equation}
There is a unique connected, simply connected Lie group $G$ such that $\mathfrak{g} = \mathfrak{g}_1 \oplus \mathfrak{g}_2$ is its Lie algebra of left-invariant vector fields. The group exponential map $\exp_G : \mathfrak{g} \to G$ is a smooth diffeomorphism and then we identify $G = \R^{2d+1}$ with the polynomial product law
\begin{equation}
(x,z) \star (x',z') = \left(x+x',z+z' + \frac{1}{2} x \cdot J x'\right).
\end{equation}
Notice that $X_1,\ldots,X_{2d}$ (and $\partial_z$) are left-invariant.

To carry on the reduction, we consider the Riemannian extension $g$ such that $\partial_{z}$ is a unit vector orthogonal to $\distr$. The geodesics associated with $\lambda \in U^*M^{\mathsf{r}}$ and starting from $q$ reach the whole Euclidean plane $q \star \{z =0 \}$ (the left-translation of $\R^{2d} \subset \R^{2d+1}$). At $q= (x,z)$ this is the plane orthogonal to the vector $\left(\tfrac{1}{2}Jx, 1\right)$ w.r.t.\ the Euclidean metric.
\end{example}

\subsection{Riemannian foliations with bundle like metric}\label{s:foliations}

Roughly speaking, a Riemannian foliation has bundle like metric if locally it is a Riemannian submersion w.r.t.\ the projection along the leaves. 

\begin{definition}\label{def:foliation}
Let $M$ be a smooth and connected $n$-dimensional Riemannian manifold. A $k$-codimensional foliation $\mathcal{F}$ on $M$ is said to be \emph{Riemannian with bundle like metric} if there exists a maximal collection of pairs $\{(U_\alpha,\pi_\alpha), \alpha \in I\}$ of open subsets $U_\alpha$ of $M$ and submersions $\pi_\alpha : U_\alpha \to U_\alpha^0 \subset \mathbb{R}^k$ such that
\begin{itemize}
\item $\{U_\alpha\}_{\alpha \in I}$ is a covering of $M$
\item If $U_\alpha \cap U_\beta \neq \emptyset$, there exists a local diffeomorphism $\Psi_{\alpha\beta} : \R^k \to \R^k$ such that $\pi_\alpha = \Psi_{\alpha\beta} \pi_\beta$ on $U_\alpha \cap U_\beta$
\item the maps $\pi_\alpha: U_\alpha \to U_\alpha^0$ are Riemannian submersions when $U_\alpha^0$ are endowed with a given Riemannian metric 
\end{itemize}
On each $U_\alpha$, the preimages $\pi_\alpha^{-1}(x_0)$ for fixed $x_0 \in U_\alpha^0$ are codimension $k$ embedded submanifolds, called the \emph{plaques} of the foliation. These submanifolds form maximal connected injectively immersed submanifolds called the \emph{leaves} of the foliation. The foliation is \emph{totally geodesic} if its leaves are totally geodesic submanifolds \cite{Tondeur}.
\end{definition}

To any Riemannian foliation with bundle-like metric we associate the splitting $T M = \distr \oplus \ver$, where $\ver$ is the bundle of vectors tangent to the leaves of the foliation and $\distr$ is its orthogonal complement (we call $\ver$ the bundle of \emph{vertical directions}, and its sections \emph{vertical vector fields}). If $\distr$ is bracket-generating, then $(\distr,g|_{\distr})$ is indeed a sub-Riemannian structure on $M$ that we refer to as \emph{tamed by a foliation} and we assume to be equipped with the corresponding Riemannian volume.

We say that a vector field $X \in \Gamma(TM)$ is basic if, locally on any $U_\alpha$, it is $\pi_\alpha$-related with some vector $X^0$ on $U_\alpha^0$. If $X \in \Gamma(TM)$ is basic, and $V \in \Gamma(\ver)$ is vertical, then the Lie bracket $[X,V]$ is vertical. In this setting we consider a local orthonormal frame $Z_1,\ldots,Z_{n-k} \in \Gamma(\ver)$ of vertical vector fields and a local orthonormal frame of basic vector fields $X_1,\ldots,X_k \in \Gamma(\distr)$ for the distribution. The structural functions are defined as
\begin{equation}
[X_i,X_j] = \sum_{\ell=1}^k b_{ij}^\ell X_\ell + \sum_{\ell = 1}^{n-k} c_{ij}^\ell Z_\ell, \qquad [X_i,Z_j] = \sum_{\ell=1}^{n-k} d_{ij}^\ell Z_\ell, \qquad [Z_i,Z_j] = \sum_{\ell=1}^{n-k} e_{ij}^\ell Z_\ell.
\end{equation}
The totally geodesic assumption is equivalent to the fact that any basic horizontal vector field $X$ generates a vertical isometry, that is
\begin{equation}\label{eq:skew}
(\mathcal{L}_X g)(Z,W) = 0, \qquad \forall Z,W \in \Gamma(\ver)\qquad \iff \qquad d_{ij}^\ell = -d_{i\ell}^j.
\end{equation}

\begin{prop}\label{p:foliationsOK}
Any sub-Riemannian structure tamed by a foliation with totally geodesic leaves satisfies assumptions (\textbf{H1}) and (\textbf{H2}).
\end{prop}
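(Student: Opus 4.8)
The plan is to follow closely the proof of Proposition~\ref{p:CarnotOK} for Carnot groups, replacing the graded structure by the Riemannian foliation structure. Fix a point of $M$ and local orthonormal frames $X_1,\dots,X_k\in\Gamma(\distr)$ (basic) and $Z_1,\dots,Z_{n-k}\in\Gamma(\ver)$ (vertical), and set $u_i(\lambda):=\langle\lambda,X_i\rangle$ and $v_j(\lambda):=\langle\lambda,Z_j\rangle$, so that $H=\tfrac12\sum_{i=1}^k u_i^2$ and $T^*M^\mathsf{r}=\{v_1=\dots=v_{n-k}=0\}$. Since the Poisson bracket of momentum functions reproduces the Lie bracket of the underlying vector fields, the structural functions give immediately
\[
\{u_i,v_j\}=\sum_{\ell=1}^{n-k} d_{ij}^\ell\, v_\ell,
\]
the key point being that the right-hand side involves only the $v_\ell$ and no $u$-terms. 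This is the analogue of the graded identity used for Carnot groups, and here it is precisely the statement that $[X_i,Z_j]$ is vertical, which holds because the $X_i$ are basic.

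For (\textbf{H1}) I would compute, along the Hamiltonian flow,
\[
\dot v_j=\{H,v_j\}=\sum_{i=1}^k u_i\{u_i,v_j\}=\sum_{i=1}^k\sum_{\ell=1}^{n-k} u_i\, d_{ij}^\ell\, v_\ell .
\]
This is a linear homogeneous system of ODEs for $(v_1,\dots,v_{n-k})$ along any integral curve of $\vec H$; hence an extremal starting on $T^*M^\mathsf{r}=\{v=0\}$ stays there for as long as it remains in $T^*M$, and $T^*M^\mathsf{r}$ is invariant. Notice that this step uses only the Riemannian-foliation hypothesis, not the totally geodesic one.

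For (\textbf{H2}) I would reduce, exactly as in the Carnot case, the Lie derivative of the reduced Liouville volume to the transverse divergence of the flow in the vertical directions. The explicit computation, carried out as in Appendix~\ref{a:computations}, should yield on $T^*M^\mathsf{r}$
\[
\mathcal{L}_{\vec H}\Theta^\mathsf{r}=-\Big(\sum_{i=1}^k\sum_{j=1}^{n-k} u_i\, d_{ij}^j\Big)\Theta^\mathsf{r}.
\]
The coefficient $\sum_j d_{ij}^j$ is the trace of the map $Z\mapsto[X_i,Z]$ on $\ver$; the totally geodesic assumption \eqref{eq:skew}, i.e.\ $d_{ij}^\ell=-d_{i\ell}^j$, makes this map skew-symmetric, so that $d_{ij}^j=0$ for every $i,j$ and the coefficient vanishes. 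Thus $\mathcal{L}_{\vec H}\Theta^\mathsf{r}=0$, which is (\textbf{H2}).

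The main obstacle is the bookkeeping in the Lie-derivative computation for (\textbf{H2}): unlike the Carnot case, the structural functions $b_{ij}^\ell,c_{ij}^\ell,d_{ij}^\ell$ are now genuine (non-constant) functions on $M$ and the frames are only local, so one must verify that the derivatives of these coefficients and the purely horizontal terms $b_{ij}^\ell,c_{ij}^\ell$ do not contribute to the coefficient of $\Theta^\mathsf{r}$ once restricted to $\{v=0\}$. Geometrically, the vanishing of $\sum_j d_{ij}^j$ expresses that the holonomy flow generated by a basic horizontal field preserves the vertical volume along the leaves, which is precisely the content of the totally geodesic condition; once this identification is made, the conclusion is immediate.
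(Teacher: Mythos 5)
Your proposal is correct and follows essentially the same route as the paper: invariance of $T^*M^\mathsf{r}$ from the linear homogeneous ODE $\dot v_j=\sum_{i,\ell}u_i d_{ij}^\ell v_\ell$ (using only that the $X_i$ are basic), and invariance of $\Theta^\mathsf{r}$ from the identity $\mathcal{L}_{\vec H}\Theta^\mathsf{r}=-\bigl(\sum_{i,j}u_i\,\partial_{v_j}\{u_i,v_j\}\bigr)\Theta^\mathsf{r}$ of Appendix~\ref{a:computations} together with the skew-symmetry $d_{ij}^\ell=-d_{i\ell}^j$ from the totally geodesic condition. The bookkeeping worry you raise is resolved exactly as you suspect: the coefficient only involves $\partial_{v_j}\{u_i,v_j\}$, and since $\{u_i,v_j\}=\sum_\ell d_{ij}^\ell v_\ell$ with $d_{ij}^\ell$ functions on the base, this derivative is just $d_{ij}^j$, so neither the derivatives of the structural functions nor the $b_{ij}^\ell,c_{ij}^\ell$ terms enter.
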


\begin{proof}
	Locally, $T^*M^\mathsf{r}$ is the zero-locus of the functions $v_i(\lambda) = \langle \lambda, Z_i \rangle$ for some family $\{Z_j\}_{j=1}^{n-k}$ of generators of $\ver$. Thus, denoting the derivative along the integral curves of $\vec{H}$ with a dot, we have
	\begin{equation}
	\dot{v}_j = \{H, v_i\} = \sum_{i=1}^k u_i \{u_i,v_j\} = \sum_{i=1}^k \sum_{\ell=1}^{n-k} u_i d_{ij}^\ell v_\ell =0, \qquad \text{on $T^*M^\mathsf{r}$}.
	\end{equation}
This readily implies the invariance of $T^*M^\mathsf{r}$.  To prove the invariance of $\Theta^\mathsf{r}$, we obtain from an explicit computation (see Appendix~\ref{a:computations})
\begin{equation}
\mathcal{L}_{\vec{H}}(\Theta^\mathsf{r}) =  -\left(\sum_{i=1}^k \sum_{\ell=1}^{n-k} u_i d_{i \ell}^\ell\right) \Theta^\mathsf{r} =0,
\end{equation}
where, in the last step, we used the totally geodesic assumption~\eqref{eq:skew}.
\end{proof}

\subsubsection{Riemannian submersions}

A Riemannian submersion $ \pi : (M,g) \to (\bar{M},\bar{g})$ is trivially a Riemannian foliation with bundle-like metric. Let $M$ be a sub-Riemannian manifold tamed by a Riemannian submersion $\pi : M \to \bar{M}$. We have the following characterization.
\begin{prop}
Let $M$ be a sub-Riemannian manifold tamed by a Riemannian submersion $\pi : M \to \bar{M}$. Then $\gamma_{\lambda}:[0,T] \to M$ is a sub-Riemannian geodesic associated with $\lambda \in U^*M^\mathsf{r}$ if and only if it is the lift of a Riemannian geodesic $\bar\gamma_{\lambda}:=\pi \circ \gamma_{\lambda}$ of $\bar{M}$.
\end{prop}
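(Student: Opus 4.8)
The plan is to identify reduced sub-Riemannian geodesics with \emph{horizontal} geodesics of the total Riemannian space $(M,g)$, and then invoke the classical correspondence between horizontal geodesics of a Riemannian submersion and geodesics of the base. The only structural input needed is hypothesis (\textbf{H1}), so I would first observe that it holds for the foliation by the fibers of $\pi$, independently of any totally geodesic assumption. Indeed, the invariance of $T^*M^\mathsf{r}$ in the proof of Proposition~\ref{p:foliationsOK} uses only that the bracket $[X_i,Z_j]=\sum_\ell d_{ij}^\ell Z_\ell$ of a basic horizontal field with a vertical field is again vertical, which is exactly the defining property of a bundle-like metric and thus holds for any Riemannian submersion. (The totally geodesic hypothesis enters only for (\textbf{H2}), which plays no role here, since the statement concerns geodesics and not volumes.) Consequently Remark~\ref{r:twoflows} is available.

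First I would prove the forward implication. Let $\lambda\in U^*M^\mathsf{r}$. By invariance of $T^*M^\mathsf{r}$ we have $\phi_t(\lambda)\in U^*M^\mathsf{r}$ for all admissible $t$, so each $\phi_t(\lambda)$ annihilates $\ver$; dualizing through $g$, the velocity $\dot\gamma_\lambda(t)$ lies in $\distr=\ver^\perp$ for every $t$, i.e.\ $\gamma_\lambda$ is horizontal. Moreover, by Remark~\ref{r:twoflows} the reduced flow coincides with the Riemannian one on $T^*M^\mathsf{r}$, so $\gamma_\lambda$ is a geodesic of the Riemannian manifold $(M,g)$. Since a horizontal geodesic of a Riemannian submersion projects to a geodesic of the base (O'Neill), $\bar\gamma_\lambda=\pi\circ\gamma_\lambda$ is a Riemannian geodesic of $\bar M$, and by construction $\gamma_\lambda$ is its horizontal lift.

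For the converse, let $\gamma$ be the horizontal lift of a Riemannian geodesic $\bar\gamma$ of $\bar M$. The horizontal lift of a geodesic through a Riemannian submersion is again a geodesic of $(M,g)$, so $\gamma$ is a Riemannian geodesic with $\dot\gamma(0)\in\distr$. The covector $\lambda$ that is $g$-dual to $\dot\gamma(0)$ therefore annihilates $\ver$, i.e.\ $\lambda\in T^*M^\mathsf{r}$, and after the length normalization $2H(\lambda)=1$ we get $\lambda\in U^*M^\mathsf{r}$. Applying Remark~\ref{r:twoflows} once more, the Riemannian geodesic issued from $\lambda$ coincides with the sub-Riemannian geodesic $\gamma_\lambda$; hence $\gamma=\gamma_\lambda$ is a reduced sub-Riemannian geodesic.

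The two classical facts I rely on---that horizontal geodesics project to base geodesics, and that horizontal lifts of geodesics are geodesics---are the standard statements of the theory of Riemannian submersions and require no totally geodesic assumption. The only point requiring care, and the main conceptual obstacle, is that an \emph{a priori} horizontal initial velocity need not keep a Riemannian geodesic horizontal for all times; this is precisely what the invariance of $T^*M^\mathsf{r}$ (hypothesis (\textbf{H1})) guarantees, so no independent O'Neill-type tensor computation is needed. Once this is in place, the equivalence is a direct translation between the reduced cotangent picture and the submersion picture.
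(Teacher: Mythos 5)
Your proof is correct, but it takes a genuinely different route from the paper's. The paper argues directly between $M$ and $\bar M$: it writes the Hamiltonian system for a sub-Riemannian extremal in $U^*M^{\mathsf r}$ in a basic orthonormal frame ($v_j\equiv 0$, $\dot u_j=\sum_{i,\ell}u_i b_{ij}^\ell u_\ell$, $\dot\gamma_\lambda=\sum_i u_iX_i$), observes that the structural functions $b_{ij}^\ell$ are constant along the fibers and descend to $\bar M$ because the $X_i$ are basic, and checks that these are literally the same ODEs as the Riemannian extremal equations on $\bar M$; the equivalence is then read off by matching $u_i=\bar u_i$. You instead pass through the Riemannian geometry of the total space: hypothesis (\textbf{H1}) together with Remark~\ref{r:twoflows} identifies reduced sub-Riemannian geodesics with horizontal geodesics of the Riemannian extension $(M,g)$, and you then invoke O'Neill's classical correspondence between horizontal geodesics of a submersion and geodesics of the base (in both directions). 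Your preliminary observation that (\textbf{H1}) needs only the bundle-like property --- brackets of basic horizontal fields with vertical fields are vertical --- and not the totally geodesic hypothesis is accurate and worth making explicit, since the proposition is stated without that hypothesis. The trade-off: the paper's computation is self-contained and in effect reproves the relevant half of O'Neill's theorem in Hamiltonian language, while your argument is shorter and more conceptual but imports the two classical submersion facts as external input. Both arguments correctly isolate the one nontrivial point, namely that an initially horizontal geodesic stays horizontal, which you rightly attribute to the invariance of $T^*M^{\mathsf r}$ rather than to a separate O'Neill-tensor computation.
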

\begin{proof}
Let $\bar{X}_1,\ldots,\bar{X}_k \in \Gamma(T\bar{M})$ be a local orthonormal frame for $(\bar{M},\bar{g})$. Let $X_1,\ldots,X_{k} \in \Gamma(\distr)$ the corresponding local orthonormal frame of basic vector fields on $M$, such that $\pi_* X_i = \bar{X}_i$. Let $Z_1,\ldots,Z_{n-k} \in \Gamma(\ver)$ be a local orthonormal frame for $\ver$. Indeed
\begin{equation}
[X_i,X_j] = \sum_{\ell=1}^{k} b_{ij}^\ell X_\ell + \sum_{\ell=1}^{n-k} c_{ij}^\ell Z_\ell, \qquad b_{ij}^\ell, c_{ij}^\ell \in C^\infty(M).
\end{equation}
Since the $X_i$'s are basic, the functions $b_{ij}^\ell \in C^\infty(M)$ are constant along the fibers of the submersion and descend to well defined functions in $C^\infty(\bar{M})$. Moreover
\begin{equation}
[\bar{X}_i,\bar{X}_j] = \sum_{\ell=1}^{k} b_{ij}^\ell \bar{X}_\ell .
\end{equation}
Sub-Riemannian extremals $\lambda(t) \in U^*M^\mathsf{r}$ satisfy
\begin{equation}\label{eq:eqsR}
v_j(t) \equiv 0,\qquad \dot{u}_j(t) = \sum_{i,\ell=1}^k u_i(t) b_{ij}^\ell u_\ell(t), \qquad \dot{\gamma}_{\lambda}(t) = \sum_{i=1}^k u_i(t) X_i(\gamma_\lambda(t)),
\end{equation}
where the structural functions $b_{ij}^\ell = b_{ij}^\ell(\gamma_{\lambda}(t))$ are computed along the sub-Riemannian geodesic.
On the other hand, Riemannian extremals $\bar{\lambda}(t) \in U\bar{M}$ satisfy
\begin{equation}\label{eq:eqR}
\dot{\bar{u}}_j(t) = \sum_{i,\ell=1}^k \bar{u}_i(t) b_{ij}^\ell \bar{u}_\ell(t), \qquad \dot{\gamma}_{\lambda}(t) = \sum_{i=1}^k \bar{u}_i(t) \bar{X}_i(\bar{\gamma}_\lambda(t)),
\end{equation}
where $\bar{u}_i :T^*\bar{M} \to \R$ are the smooth functions $u_i(\bar{\lambda}) = \langle \bar\lambda,\bar{X}_i\rangle$, for $i=1,\ldots,k$ and are computed along the extremal. The statement follows by observing that the projections $\bar\gamma_\lambda = \pi \circ \gamma_{\lambda}$ of sub-Riemannian extremals satisfy~\eqref{eq:eqR} with $\bar{u}_i(t) = u_i(t)$. Viceversa, for any Riemannian geodesic $\bar{\gamma}_{\bar\lambda}$ on $\bar{M}$, its horizontal lift $\gamma_{\lambda}$ on $M$ satisfies~\eqref{eq:eqsR} with $u_i(t) = \bar{u}_i(t)$ and $v_j \equiv 0$. 
\end{proof}

\begin{example}[Complex Hopf fibrations]\label{ex:cr-hopf}
Consider the odd dimensional spheres $\mathbb{S}^{2d+1}$
\begin{equation}
\mathbb{S}^{2d+1} = \{(z_0,z_1,\ldots,z_d) \in \C^{d+1} \mid \|z\| = 1 \},
\end{equation}
 equipped with the standard round metric. The unit complex numbers $\mathbb{S}^1 = \{z \in \C \mid |z|=1\}$ give an isometric action of $\mathrm{U}(1)$ on $\mathbb{S}^{2d+1}$ by
\begin{equation}
z  \to e^{i\theta} z, \qquad z \in \mathbb{S}^{2d+1},\, \theta \in (-\pi,\pi].
\end{equation}
Hence, the quotient space $\mathbb{S}^{2d+1}/\mathbb{S}^1 \simeq \mathbb{CP}^d$ (the complex projective space) has a unique Riemannian structure (the Fubini-Study metric) such that the projection
\begin{equation}
p(z_0,\ldots,z_d) = [z_0:\ldots:z_d]
\end{equation}
is a Riemannian submersion. The fibration $\mathbb{S}^1 \hookrightarrow \mathbb{S}^{2d+1} \xrightarrow{p} \mathbb{CP}^d$ is called the \emph{complex Hopf fibration}. In real coordinates $z_j = x_j + i y_j$ on $\C^{d+1}$, the vertical distribution $\ver = \ker p_*$ is generated by the restriction to $\mathbb{S}^{2d+1}$ of the unit vector field
\begin{equation}
\xi = \sum_{j=0}^d (x_j \partial_{y_j} - y_j \partial_{x_j}).
\end{equation}
The orthogonal complement $\distr := \ver^\perp$ with the restriction $g|_{\distr}$ of the round metric define the standard sub-Riemannian structure on the complex Hopf fibrations. 
In real coordinates, as subspaces of $\R^{2d+2}$, the hemisphere and its boundary are
\begin{equation}
M=\mathbb{S}_+^{2d+1}:= \left\lbrace \sum_{i=0}^d x_i^2+y_i^2 =1 \mid x_0 \geq 0  \right\rbrace, \quad \partial M = \left\lbrace \sum_{i=0}^d x_i^2+y_i^2 =1 \mid x_0 = 0  \right\rbrace.
\end{equation}

A different set of coordinates we will use is the following
\begin{equation}
(\theta,w_1,\ldots,w_d) \mapsto \left(\frac{e^{i\theta}}{\sqrt{1+|w|^2}},\frac{w_1 e^{i\theta}}{\sqrt{1+|w|^2}},\ldots,\frac{w_d e^{i\theta}}{\sqrt{1+|w|^2}}  \right),
\end{equation}
where $\theta \in (-\pi,\pi)$ and $w=(w_1,\ldots,w_d) \in \C^d$. In particular $(w_1,\ldots,w_d)$ are inohomgeneous coordinates for $\mathbb{CP}^d$ given by $w_j = z_j/z_0$ and $\theta$ is the fiber coordinate. The north pole corresponds to $\theta = 0$ and $w = 0$. The hemisphere is characterized by $\theta \in [-\tfrac{\pi}{2},\tfrac{\pi}{2}]$ and its boundary by $\cos(\theta) = 0$.
\end{example}

\begin{example}[Quaternionic Hopf fibrations]\label{ex:qhf}
Let $\H$ be the field of quaternions. If $q = x+ iy +jz + kw$, with $x,y,z,w \in \R$, the quaternionic norm is
\begin{equation}
\|q\| = x^2 + y^2 + z^2 +w^2.
\end{equation}
Consider the sphere $\mathbb{S}^{4d+3}$ as a subset of the quaternionic space $\H^d$,
\begin{equation}
\mathbb{S}^{4d+3} = \{(q_0,q_1,\ldots,q_d) \in \H^{d+1} \mid \|q\| = 1 \},
\end{equation}
equipped with the standard round metric. The left multiplication by unit quaternions $\mathbb{S}^3 = \{q \in \H \mid |q|=1\}$ gives an isometric action of $\mathrm{SU}(2)$ on $\mathbb{S}^{4d+3}$. The quotient space $\mathbb{S}^{4d+3}/\mathbb{S}^3 \simeq \mathbb{HP}^d$ (the quaternionic projective space) has a unique Riemannian structure such that the projection
\begin{equation}
p(q_0,\ldots,q_d) = [q_0:\ldots:q_d]
\end{equation}
is a Riemannian submersion. The fibration $\mathbb{S}^3 \hookrightarrow \mathbb{S}^{4d+3} \xrightarrow{p} \mathbb{HP}^d$ is the \emph{quaternionic Hopf fibration}. In real coordinates $q_j = x_j + i y_j + j z_j + k w_j$ on $\H^{d+1}$, the vertical distribution $\ver = \ker p_*$ is generated by
\begin{gather}
\xi_I   = \sum_{i=0}^{d}  y_i\partial_{x_i} - x_i\partial_{y_i}  +w_i\partial_{z_i} -z_i\partial_{w_i}, \qquad \xi_J  = \sum_{i=0}^{d}    z_i\partial_{x_i} -w_i\partial_{y_i} -x_i\partial_{z_i} +y_i\partial_{w_i},\\
\xi_K  = \sum_{i=0}^{d} w_i\partial_{x_i} + z_i\partial_{y_i}  -y_i\partial_{z_i} - x_i\partial_{w_i}.   
\end{gather}
The orthogonal complement $\distr := \ver^\perp$ with the restriction $g|_{\distr}$ of the round metric define the standard sub-Riemannian structure on the quaternionic Hopf fibrations.

In real coordinates, the hemisphere $M  =\mathbb{S}_+^{4d+4} \subset \R^{4d+4}$ and its boundary are
\begin{align}
M & = \left\lbrace \sum_{i=0}^d x_i^2+y_i^2+z_i^2+w_i^2 =1 \mid x_0 \geq 0  \right\rbrace, \\
\partial M  &= \left\lbrace \sum_{i=0}^d x_i^2+y_i^2+z_i^2+w_i^2 =1 \mid x_0 = 0  \right\rbrace.
\end{align}

A different set of coordinates we will use is the following
\begin{equation}
(\theta_1,\theta_2,\theta_3,w_1,\ldots,w_d) \mapsto \left(\frac{e^{i\theta_1+j\theta_2 + k\theta_3}}{\sqrt{1+|w|^2}},\frac{w_1 e^{i\theta_1+j\theta_2 + k\theta_3}}{\sqrt{1+|w|^2}},\ldots,\frac{w_d e^{i\theta_1+j\theta_2 + k\theta_3}}{\sqrt{1+|w|^2}}  \right),
\end{equation}
where $|\theta|^2 =\theta_1^2+ \theta_2^2 +\theta_3^2 < \pi^2$ and $w=(w_1,\ldots,w_d) \in \mathbb{H}^d$. In particular $(w_1,\ldots,w_d)$ are inohomgeneous coordinates for $\mathbb{HP}^d$ given by $w_j = q_0^{-1} q_j$ and $\theta_1,\theta_2,\theta_3$ are local coordinates on $\mathrm{SU}(2)$. The north pole corresponds to $\theta_1=\theta_2=\theta_3 = 0$ and $w = 0$. The hemisphere is characterized by $|\theta| \leq \pi/2$ and its boundary by $\cos|\theta| = 0$.
\end{example}

\section{Applications}\label{s:applications}

In this section we present the proofs of the applications of the reduced Santal\'o formula presented in Sections~\ref{sec:hardy-type-intro}, \ref{sec:spectral-gap-intro}  and \ref{sec:isoperimetric-intro}.

\subsection{Hardy-type inequalities}
It is well known that for all $f\in C_0^\infty([0,a])$ one has
\begin{equation}\label{eq:poincare-1d}
  \int_0^a f'(t)^2 dt \ge
  \frac{\pi^2}{a^2} \int_0^a f(t)^2 dt, \qquad \text{(1D Poincar\'e inequality)}
\end{equation}
with equality holding if and only if $f(t) = C \sin\left(\frac\pi{a} t\right)$. Moreover,
\begin{equation}\label{eq:hardy-1d}
  \int_0^a f'(t)^2 dt \ge
  \frac{1}{4} \int_0^a \frac{f(t)^2}{d(t)^2} dt, \qquad \text{(1D Hardy inequality)}
\end{equation}
where $d(t) = \min\{t,a-t\}$ is the distance from the boundary and the equality holds if and only if $f(t) = 0$.

Recall that $\ell(\lambda)$ is the length at which the geodesic with initial covector $\lambda$ leaves $M$ crossing the boundary $\partial M$  and, in general, $t \mapsto \ell(\phi_t(\lambda))$ is a decreasing function. Then $\ell(\cdot)$ is not invariant under the flow $\phi_t$. For this reason in Section~\ref{sec:hardy-type-intro} we introduced the function $L :U^*M \to [0,+\infty]$ defined as $L(\lambda) := \ell(\lambda) + \ell(-\lambda)$, that measures the length of the projection of the maximal integral line of $\vec{H}$ passing through $\lambda$.
Indeed, $L(\cdot)$ is $\phi_t$-invariant and coincides with $\ell(\cdot)$ on $U^+\partial M$, since it can be equivalently defined as
\begin{equation}
L(\lambda):= \begin{cases}
\ell(-\phi_{\ell(-\lambda)}(-\lambda)), &  \text{if } \ell(-\lambda) < + \infty, \\
+\infty, & \text{otherwise}.
\end{cases}
\end{equation} 

\begin{proof}[Proof of Proposition~\ref{p:hardylike-intro}]

Choose coordinates $x$ around a fixed $q \in M$ as in Lemma~\ref{l:eta-r}, and let $(p,x)$ be the associated canonical coordinates on $T^*M$. Let $Q$ be a quadratic form on $T_{q}^*M^\mathsf{r}$. In particular $Q(\lambda) =  \sum_{i,j=1}^k  p_iQ_{ij} p_j$, where $\lambda = (p_1,\ldots,p_k)$. By Lemma~\ref{l:eta-r} we have
\begin{equation}
\int_{U_{q}^*M^\mathsf{r}} Q(\lambda) \eta_{q_0}^\mathsf{r}(\lambda) = \int_{\mathbb{S}^{k-1}} \sum_{i,j=1}^k Q_{ij} p_i p_j\, d \mathrm{vol}_{\mathbb{S}^{k-1}}(p) = \frac{|\mathbb{S}^{k-1}|}{k} \tr(Q),
\end{equation}
where we performed the standard integral of a quadratic form on $\mathbb{S}^{k-1}$. Choosing $Q(\lambda) = \langle \lambda, \nabla_H f(q)\rangle^2$, then $\tr(Q) = |\nabla_H f(q)|^2$. Thus, for any point $q \in M$ we have
\begin{equation}\label{eq:104}
\frac{|\mathbb S^{k-1}|}{k} |\nabla_H f(q)|^2 =\int_{U_q^* M^\mathsf{r}} \langle \lambda, \nabla_H f(q)\rangle^2 \eta_q^\mathsf{r}(\lambda), \qquad \forall f \in C^\infty(M).
\end{equation}
Using the reduced Santal\'o formula \eqref{eq:santalo-reduced},
\begin{equation*}
\begin{split}
  \frac{|\mathbb S^{k-1}|}{k} \int_M |\nabla_H f(q)|^2 \omega(q) 
    &= \int_M  \left[\int_{U_q^* M^\mathsf{r}} \langle \lambda, \nabla_H f(q)\rangle^2 \eta_q^\mathsf{r}(\lambda) \right]\omega(q)\\
    &= \int_{U^*M^\mathsf{r}} \langle \lambda, \nabla_H f\rangle^2 \mu^\mathsf{r} \\
    &\ge \int_{U^\vis M^\mathsf{r}} \langle \lambda, \nabla_H f\rangle^2 \mu^\mathsf{r} \\
    & = \int_{\partial M} \left[\int_{U_q^+\partial M^\mathsf{r}}  \left( \int_0^{\ell(\lambda)}  \langle \phi_t(\lambda), \nabla_H f\rangle^2 dt\right)  \langle \lambda,{\mathbf{n}_q}\rangle \eta^\mathsf{r}_q(\lambda)\right] \sigma(q).
\end{split}
\end{equation*}
Consider the subset $D = U^+\partial M^\mathsf{r} \cap \{\ell < +\infty\}$. Let $f_\lambda(t) := f(\pi\circ\phi_t(\lambda))$. For $\lambda \in D$ we have $f_\lambda(0) = f_\lambda(\ell(\lambda)) = 0$ and the one-dimensional Poincar\'e inequality \eqref{eq:poincare-1d} gives
\begin{equation}\label{eq:stepineq}
\int_0^{\ell(\lambda)} \langle \phi_t(\lambda), \nabla_H f\rangle^2  dt 
 =  \int_0^{\ell(\lambda)}  f'_\lambda(t)^2  dt \ge \frac{\pi^2}{\ell^2(\lambda)}\int_0^{\ell(\lambda)}  f_\lambda(t)^2  dt.
\end{equation}
Indeed we can replace $\ell$ with $L$, which is $\phi_t$-invariant. Then
\begin{equation}
\frac{|\mathbb S^{k-1}|}{k} \int_M |\nabla_H f(q)|^2 \omega(q) 
\geq \pi^2\int_{\partial M} \left[\int_{D_q}  \left( 
\int_0^{\ell(\lambda)}  \frac{f_\lambda(t)^2}{L(\lambda)^2} dt
\right) \langle \lambda,{\mathbf{n}_q}\rangle \eta^\mathsf{r}_q(\lambda)\right] \sigma(q).
\end{equation}
Since on $U^+_q\partial M \setminus D_q$ the function $1/L(\lambda)^2 = 0$, we can replace $D_q$ with $U_q^+\partial M$. Using again Santal\'o formula to restore the integral on $U^\vis M^\mathsf{r}$, we obtain
\begin{equation}
\int_M |\nabla_H f(q)|^2 \omega(q) 
 \geq  \frac{k \pi^2}{|\mathbb S^{k-1}| } \int_{U^\vis M^\mathsf{r}} \frac{(\pi^* f)^2}{L^2} \mu^\mathsf{r}
 = \frac{k \pi^2}{|\mathbb{S}^{k-1}|}\int_M \left[\int_{U_q^* M^\mathsf{r}}\frac{1}{L^2}\eta^\mathsf{r}_q\right] f(q)^2 \omega(q).
\end{equation}
The second equality follows by Lemma~\ref{l:vert-split-sub}. This concludes the proof of~\eqref{eq:hardy1-intro}.

To prove~\eqref{eq:hardy2-intro} we replace Poincar\'e inequality with Hardy in~\eqref{eq:stepineq}:
\begin{multline}
\int_0^{\ell(\lambda)} \langle \phi_t( \lambda), \nabla_H f\rangle^2  dt 
 =  \int_0^{\ell(\lambda)}  f'_\lambda(t)^2  dt \\ 
 \ge \frac{1}{4}\int_0^{\ell(\lambda)}  \frac{f_\lambda(t)^2}{\min\{t,\ell(\lambda)-t\}^2}  dt 
 \ge \frac{1}{4}\int_0^{\ell(\lambda)}  \frac{f_\lambda(t)^2}{\ell(\phi_t(\lambda))^2}  dt,
\end{multline}
where we used the fact that if $\lambda \in U^+\partial M$ then $\ell(\phi_t(\lambda)) = \ell(\lambda)-t$.
We then proceed as in the previous case without replacing $\ell$ with $L$.
\end{proof}

\begin{proof}[Proof of Proposition~\ref{p:p-hardy-intro}]
The result is obtained by mimicking the proof of Proposition \ref{p:hardylike-intro}. Observe that, for any $f \in C^\infty_0(M)$ and $q \in M$, we have
\begin{align}
\int_{U_q^* M^\mathsf{r}} |\langle \lambda, \nabla_H f(q)\rangle|^p \eta_q^\mathsf{r}(\lambda) & = |\nabla_H f(q)|^p \int_{U_q^* M^\mathsf{r}} \left|\langle \lambda, \frac{\nabla_H f(q)}{|\nabla_H f(q)| }\rangle\right|^p \eta_q^\mathsf{r}(\lambda)  \\
& = 2 |\nabla_H f(q)|^p  \int_{\mathbb{S}^{k-1}\cap \{p_1 > 0\}}p_1^p \,d\mathrm{vol}_{\mathbb{S}^{k-1}}(p) \\
& = |\nabla_H f(q)|^p C_{p,k}^{-1},
\end{align}
where we used coordinates as in Lemma~\ref{l:eta-r}, the rotational invariance of the measure and
\begin{equation}
C_{p,k} := \frac{\Gamma(\tfrac{k+p}{2})}{2\Gamma(\tfrac{1+p}{2})\pi^{(k-1)/2}} = \frac{k}{|\mathbb{S}^{k-1}|} \frac{\sqrt{\pi}\,\Gamma(\tfrac{k+p}{2})}{2\Gamma(\tfrac{1+p}{2})\Gamma(\tfrac{k}{2}+1)}.
\end{equation}
Using the above in place of~\eqref{eq:104}, by Santal\'o formula~\eqref{eq:santalo-reduced} we obtain
\begin{equation}
C_{p,k}^{-1} \int_M |\nabla_H f|^p \omega(q) \geq \int_{\partial M} \left[\int_{U_q^+\partial M^\mathsf{r}}  \left( \int_0^{\ell(\lambda)}  |\langle \phi_t(\lambda), \nabla_H f\rangle|^p dt\right)  \langle \lambda,{\mathbf{n}_q}\rangle \eta^\mathsf{r}_q(\lambda)\right] \sigma(q).
\end{equation}
To prove~\eqref{eq:p-hardy1-intro} we proceed as in the proof of Proposition~\ref{p:hardylike-intro} replacing the step~\eqref{eq:stepineq} with the $L^p$ Poincar\'e inequality \cite[Sec. 5.3]{lindq}
\begin{equation}
\int_0^a |f'(t)|^p dt \ge  \left(\frac{\pi_p}{a}\right)^p \int_0^a |f(t)|^p dt.
\end{equation}
We proceed similarly for the proof of~\eqref{eq:p-hardy2-intro}, replacing \eqref{eq:stepineq} with the $L^p$ Hardy's inequalities \cite[Thm. 327]{HLP-Inequalities}
\begin{equation}
\int_0^a |f'(t)|^p dt \ge
  \left(\frac{p-1}{p}\right)^p \int_0^a \frac{|f(t)|^p}{d(t)^p} dt.
  \qedhere
\end{equation}
\end{proof}

\begin{proof}[Proof of Proposition~\ref{p:lowerbound-intro}] With $L := \sup_{\lambda\in U^*M^{\rm r}} L(\lambda)$, the Hardy inequality \eqref{eq:hardy1-intro} can be further simplified into
\begin{equation}
\int_M |\nabla_H f|^2 \omega \geq \frac{k \pi^2}{L^2}\int_M f^2 \omega.
\end{equation}
By the min-max principle~\eqref{eq:minmax}, whenever any $f \in C_0^\infty(M)$ such that $\int_M f^2 \omega =1$, we have
\begin{equation}
\lambda_1(M) \geq \int_M |\nabla_H f|^2 \omega \geq \frac{k \pi^2}{L^2}. \qedhere
\end{equation}
\end{proof}

\begin{proof}[Proof of Proposition~\ref{p:sharpness-lowerbound-intro}]
Fix a north pole $q_0$ and the hemisphere $M$ whose center is $q_0$. By Remark~\ref{r:twoflows}, in all three cases, the reduced sub-Riemann\-ian geodesics are a subset of the Riemann\-ian ones (great circles). In particular $L = \pi$ and Proposition~\ref{p:lowerbound-intro} gives
\begin{equation}
\lambda_1(M) \geq k,
\end{equation}
where $k = d$ for the Riemannian sphere, $k= 2d$ for the CHF and $k=4d$ for the QHF.

In all cases, uniqueness of $\Phi=\cos(\delta)\in C^\infty_0(M)$ follows as in the Riemannian case from the min-max principle \cite[Corollary~2, p.\ 20]{chavelbook-eigen}.
To complete the proof, we show that $\Phi$ is an eigenfunction of the positive (sub-)Laplacian with eigenvalue $d$, $2d$, $4d$, respectively.  In the Riemannian case this is well known. For the CHF we use coordinates $(\theta,w)$ of Example~\ref{ex:cr-hopf}. Then,
\begin{equation}
\Phi =x_0 = \frac{\cos(\theta)}{\sqrt{1+|w|^2}} = \cos(\theta)\cos(r),
\end{equation}
where we have set $\tan(r) = |w|$. In \cite[Proposition 2.3]{BW-CR} the authors show that for a function depending only on $\theta$ and $r$ the action of the sub-Laplacian reduces to the action of its cylindrical part, given by
\begin{equation}
\tilde{\Delta} = \partial_r^2 + ((2d-1)\cot(r) - \tan(r)) \partial_r + \tan^2(r)\partial^2_\theta.
\end{equation}
In particular $\Delta \Phi = \tilde\Delta \Phi = (-2d) \Phi$.

For the QHF we use coordinates $(\theta_1,\theta_2,\theta_3,w)$ of Example~\ref{ex:qhf}. Using the expression
\begin{equation}
e^{i\theta_1+j\theta_2 +k \theta_3} = \cos(\eta) + (i\theta_1 + j \theta_2 + k \theta_3)\frac{\sin(\eta)}{\eta} , \qquad \eta: = \sqrt{\theta_1^2+\theta_2^2+\theta_3^2},
\end{equation}
we obtain
\begin{equation}
\Phi = x_0 = \frac{\cos(\eta)}{\sqrt{1+|w|^2}} = \cos(\eta)\cos(r),
\end{equation}
where we have set $\tan(r) = |w|$. In \cite[Definition 2.1 and Proposition 2.2]{BW-QHF} the authors show that, for a function depending only on $\eta$ and $r$, the action of the sub-Laplacian reduces to the action of its cylindrical part, given by
\begin{equation}
\tilde{\Delta} = \partial_r^2 + ((4d-1)\cot(r) - 3\tan(r)) \partial_r + \tan^2(r)\left(\partial^2_\eta + 2\cot(\eta)\partial_\eta\right).
\end{equation}
In particular $\Delta \Phi = \tilde\Delta \Phi = (-4d) \Phi$.
\end{proof}

\subsection{Isoperimetric inequalities} \label{sub:isoperimetric_inequalities}

We define some quantities that we already introduced.

\begin{definition}
	\label{def:vis-angle}
	The \emph{visibility angle at $q\in M$} and the \emph{optimal visibility angle} are
	\begin{equation}
		\theta^\vis_q :=  \frac{\eta_q^\mathsf{r}(U_q^\vis M^\mathsf{r})}{ \eta_q^\mathsf{r}(U_q M^\mathsf{r})} ,
		\qquad \tilde\theta^\vis_q := \frac{\eta_q^\mathsf{r}(\tilde U_q^\vis M^\mathsf{r})}{\eta_q^\mathsf{r}(U_q M^\mathsf{r})}.
	\end{equation}
	The \emph{least visibility angle} and the \emph{least optimal visibility angle} are
	\begin{equation}
	 	\theta^\vis = \inf_{q\in M}\theta^\vis_q , \qquad \tilde\theta^\vis = \inf_{q\in M}\tilde\theta^\vis_q.
	 \end{equation} 
\end{definition}

Notice that $\theta^\vis_q, \tilde \theta^\vis_q, \theta^\vis, \tilde\theta^\vis  \in [0,1]$ and do not depend on the choice of the volume $\omega$.

\begin{definition}
The sub-Riemannian \emph{diameter} and \emph{reduced diameter} are:
\begin{align}
\diam(M) & := \sup\{\d(x,y)\mid x,y \in M\} = \sup\{\tilde{\ell}(\lambda) \mid \lambda \in U^*M \}, \\
\diam^\mathsf{r}(M) & := \sup\{\tilde{\ell}(\lambda) \mid \lambda \in U^*M^\mathsf{r} \}.
\end{align}
\end{definition}
Clearly $\diam^{\mathsf{r}}(M) \leq \diam(M)$.

\begin{proof}[Proof of Proposition~\ref{p:type1iso-intro}]
	The proof follows as in \cite{chavelbook, croke-iso}, considering $F=1$ in~\eqref{eq:santalo-reduced}.
	The l.h.s.\ is estimated from below using the disintegration of $\mu^\mathsf{r}$ given in Lemma~\ref{l:vert-split-sub}.
	For the estimate of the r.h.s.\ we only observe that, by Lemma~\ref{l:eta-r} we have
	\begin{equation}
		\int_{U^+_q\partial M^\mathsf{r}} \langle \lambda, \mathbf{n}_q \rangle \eta_q^\mathsf{r}(\lambda) 
		= \int_{\mathbb{S}^{k-1}\cap\{p_1>0\}} p_1\, d\mathrm{vol}_{\mathbb S^{k-1}}(p)
		= \frac{|\mathbb{S}^k|}{2\pi}. \qedhere
	\end{equation}
\end{proof}

\begin{proof}[Proof of Proposition~\ref{p:sharp-iso-intro}]
For all these structures, all the inequalities in the proof of Proposition~\ref{p:type1iso-intro} are equalities, hence the sharpness follows. Anyway, here we perform the explicit computation for the hemisphere of the sub-Riemannian complex Hopf fibration; the remaining case of the quaternionic Hopf fibration can be checked following the same steps. 

We use the notation of Example~\ref{ex:cr-hopf}, and real coordinates.
Let $q=(0,y_0,\ldots,x_d,y_d) \in \partial M$. The sub-Riemannian normal $\mathbf{n}_q$ is the unique inward pointing unit vector in $\distr_q$ orthogonal to $\distr_q \cap T_q \partial M$. Indeed, $T_q \partial M$ is the orthogonal complement to $\partial_{x_0}$ w.r.t.\ to the Riemannian round metric, while $\distr_q$ is the orthogonal complement to $\xi$. Thus, $\mathbf{n}_q=\alpha \xi + \beta \partial_{x_0}$. The condition $\mathbf{n}_q \in \distr$ and the normalization imply 
\begin{equation}
\mathbf{n}_q = \frac{1}{\sqrt{1-g(\partial_{x_0},\xi)^2}}\left(\partial_{x_0} - g(\partial_{x_0},\xi) \xi \right).
\end{equation}
Using the explicit expression for $\xi$ we obtain
\begin{equation}
\mathbf{n}_q = \sqrt{1-y_0^2}\,\partial_{x_0} \mod T_q \partial M.
\end{equation}
Notice that $C(\partial M) = \{ y_0^2 = 1\} \cap \partial M$. Due to the factor $\rho(y_0):=\sqrt{1-y_0^2}$, the sub-Riemannian surface measure $\sigma$ is different from the Riemannian one in cylindrical coordinates $\sigma_R = \iota_{\partial_{x_0}} \omega = \rho^{2d-2} dy_0\, d\mathrm{vol}_{\mathbb{S}^{2d-1}}$. In particular
\begin{equation}
\sigma(\partial M) = \int_{\partial M} \rho(y_0)\, \sigma_R = |\mathbb{S}^{2d-2}|  \int_{-1}^1  \rho(y_0)^{2d-1} d y_0 = \frac{|\mathbb{S}^{2d+1}|}{|\mathbb{S}^{2d}|}.
\end{equation}
Moreover $\omega(M) = |\mathbb{S}^{2d+1}|/2$. By Remark~\ref{r:twoflows}, the reduced geodesics $\gamma_\lambda$ with $\lambda \in U^*M^\mathsf{r}$ are a subset of Riemannian geodesics hence $\vartheta^\vis = \tilde{\vartheta}^\vis = 1$ and $\ell = \diam^\mathsf{r}(M) = \pi$.
\end{proof}

\appendix

\section{Lie derivative of the reduced Liouville volume}\label{a:computations}

\begin{lemma}
In the notation of Section~\ref{sec:redsant}, we have
\begin{equation}
\mathcal{L}_{\vec{H}} \Theta^\mathsf{r} = - \left( \sum_{j=1}^{n-k} \sum_{i=1}^k  u_i \partial_{v_j} \{u_i,v_j\}\right) \Theta^\mathsf{r}.
\end{equation}
\end{lemma}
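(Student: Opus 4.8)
The plan is to realize $\Theta^\mathsf{r}$ as the restriction to $T^*M^\mathsf{r}$ of the globally defined $(n+k)$-form $\tilde\Theta^\mathsf{r} := \iota_{\partial_{v_1}}\cdots\iota_{\partial_{v_{n-k}}}\Theta$ on $T^*M$, and to differentiate using Cartan calculus. By Liouville's theorem the full Liouville volume is flow-invariant, $\mathcal{L}_{\vec H}\Theta = 0$, so the graded commutation rule $[\mathcal{L}_{\vec H}, \iota_V] = \iota_{[\vec H, V]}$, applied repeatedly, gives
\[
\mathcal{L}_{\vec H}\tilde\Theta^\mathsf{r} = \sum_{j=1}^{n-k} \iota_{\partial_{v_1}}\cdots\iota_{[\vec H, \partial_{v_j}]}\cdots\iota_{\partial_{v_{n-k}}}\Theta,
\]
where $[\vec H, \partial_{v_j}]$ replaces $\partial_{v_j}$ in the $j$-th slot. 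Everything thus reduces to identifying the commutators $[\vec H, \partial_{v_j}]$.

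First I would compute $[\vec H, \partial_{v_j}]$ by testing it against the fiber-linear coordinates $u_a, v_b$ and the base coordinates $x_i$. Since $\partial_{v_j}$ kills base functions and the brackets $\{u_m, x_i\}$ are fiber-constant, one gets $[\vec H, \partial_{v_j}](x_i) = 0$, so the commutator is vertical. Writing $\mu_a$ for any momentum function $u_1,\ldots,u_k,v_1,\ldots,v_{n-k}$, and using $\vec H(g)=\{H,g\}=\sum_i u_i\{u_i, g\}$ together with $\partial_{v_j}u_i = 0$, I would obtain $[\vec H, \partial_{v_j}](\mu_a) = -\sum_{i=1}^k u_i\,\partial_{v_j}\{u_i, \mu_a\}$, whence
\[
[\vec H, \partial_{v_j}] = -\sum_{i=1}^k u_i\left(\sum_{b=1}^k \partial_{v_j}\{u_i, u_b\}\,\partial_{u_b} + \sum_{c=1}^{n-k}\partial_{v_j}\{u_i, v_c\}\,\partial_{v_c}\right).
\]

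Substituting this into the sum above, I expect exactly three types of terms, to be separated by an alternation/restriction argument. The components along $\partial_{v_c}$ with $c \neq j$ insert $\partial_{v_c}$ into a slot where it is already contracted, so they vanish by $\iota_{\partial_{v_c}}\iota_{\partial_{v_c}} = 0$. The component along $\partial_{v_j}$ reproduces $\tilde\Theta^\mathsf{r}$ with coefficient $-\sum_i u_i\,\partial_{v_j}\{u_i, v_j\}$, which summed over $j$ yields the claimed factor. The component along $\partial_{u_b}$ produces a form $\Xi_{jb} := \iota_{\partial_{v_1}}\cdots\iota_{\partial_{u_b}}\cdots\iota_{\partial_{v_{n-k}}}\Theta$; this is the step to watch. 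Restricting to $T^*M^\mathsf{r} = \{v = 0\}$, whose tangent space is spanned by $\partial_{u_1},\ldots,\partial_{u_k},\partial_{x_1},\ldots,\partial_{x_n}$, I would note that $\partial_{u_b}$ is tangent to $T^*M^\mathsf{r}$ and already appears among the contracted vectors of $\Xi_{jb}$; since $\Theta$ is alternating, $\iota_{\partial_{u_b}}\Xi_{jb} = 0$, and as $\iota^*\Xi_{jb}$ is a top-degree form on $T^*M^\mathsf{r}$ whose evaluation on that basis already contains $\partial_{u_b}$, it vanishes identically. Hence only the $\partial_{v_j}$-terms survive, giving the stated identity once one uses (via \textbf{H1}) that $\vec H$ is tangent to $T^*M^\mathsf{r}$, so that $\iota^*\mathcal{L}_{\vec H}\tilde\Theta^\mathsf{r} = \mathcal{L}_{\vec H}\Theta^\mathsf{r}$.

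The main obstacle is precisely the vanishing of the $\partial_{u_b}$-contributions: one must recognize that, although $\Xi_{jb}$ is a nonzero ambient form, its pullback to $T^*M^\mathsf{r}$ is zero because the evaluation basis of $T(T^*M^\mathsf{r})$ necessarily contains the already-contracted $\partial_{u_b}$. A secondary, purely bookkeeping point is keeping signs and slot orderings consistent; since the final relation is a proportionality $\mathcal{L}_{\vec H}\Theta^\mathsf{r} = c\,\Theta^\mathsf{r}$, any global sign coming from the ordering of the $\iota_{\partial_{v_j}}$ is harmless.
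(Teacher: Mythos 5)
Your proposal is correct and follows essentially the same route as the paper's own proof: both reduce the computation, via $\mathcal{L}_{\vec H}\Theta=0$ and the Leibniz/commutation rule, to the commutators $[\vec H,\partial_{v_j}]$, compute these identically, and then kill the $\partial_{u_b}$- and off-diagonal $\partial_{v_c}$-contributions by the alternating property of $\Theta$. The only difference is notational (iterated interior products versus direct evaluation of $\Theta^\mathsf{r}$ on the frame $(\partial_u,\partial_x)$), so there is nothing further to add.
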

\begin{proof}
For any $\ell$-tuple $w = (w_1,\ldots,w_{\ell})$ of vector fields and $\ell$-form $\alpha$, we denote
\begin{equation}
\alpha(\mathcal{L}_{\vec{H}}(w)) = \sum_{i=1}^{\ell} \alpha(w_1,\ldots,[\vec{H},w_i],\ldots,w_\ell).
\end{equation}
Let $(x_1,\ldots,x_n): U \to \R^n$ be coordinates on $U \subset M$. Then $(x,u,v): \pi^{-1}(U) \to \R^{2n}$ are local coordinates for $T^*M$ and $T^*M^\mathsf{r} \cap \pi^{-1}(U) = \{(x,u,v)\mid v = 0 \}$.

Denote $\partial_v = (\partial_{v_1},\ldots,\partial_{v_{n-k}})$, $\partial_u = (\partial_{u_1},\ldots,\partial_{u_k})$ and $\partial_x = (\partial_{x_1},\ldots,\partial_{x_n})$. 
Recall that $\Theta^\mathsf{r}(\partial_u,\partial_x) = \Theta(\partial_u,\partial_v,\partial_x) = (-1)^{k(n-k)} \Theta(\partial_v,\partial_u,\partial_x)$. Then, using twice $(\mathcal{L}_{\vec{H}} \alpha)(w) = \vec{H}(\alpha(w)) -\alpha(\mathcal{L}_{\vec{H}}(w))$ for any $\ell$-form $\alpha$ and $\ell$-uple $w$, we obtain
\begin{align}
(\mathcal{L}_{\vec{H}} \Theta^\mathsf{r})(\partial_u,\partial_x) & = \vec{H}(\Theta^\mathsf{r}(\partial_u,\partial_x)) - \Theta^\mathsf{r}(\mathcal{L}_{\vec{H}}(\partial_u,\partial_x))\\
& = (-1)^{k(n-k)} \left[\vec{H}( \Theta(\partial_v,\partial_u,\partial_x)) -\Theta(\partial_v,\mathcal{L}_{\vec{H}}(\partial_u,\partial_x))\right] \\
& = (-1)^{k(n-k)} \left[(\mathcal{L}_{\vec{H}}\Theta)(\partial_v,\partial_u,\partial_x) + \Theta(\mathcal{L}_{\vec{H}}(\partial_v),\partial_u,\partial_x)\right] \\
& =\Theta(\partial_u,\mathcal{L}_{\vec{H}}(\partial_v),\partial_x), \label{eq:derthetared}
\end{align}
where, in the last step, we used that $\mathcal{L}_{\vec{H}}\Theta = 0$.
Now observe that, for $j=1,\ldots,n-k$,
\begin{align}
[\vec{H},\partial_{v_j}] & = \sum_{i=1}^k [u_i \vec{u}_i,\partial_{v_j}] = \sum_{i=1}^k u_i[\vec{u}_i, \partial_{v_j}] \\
& = -\sum_{i=1}^k \sum_{\ell=1}^{k} u_i \partial_{v_j} \{u_i,u_\ell\} \partial_{u_\ell} -\sum_{i=1}^k \sum_{\ell=1}^{n-k} u_i \partial_{v_j} \{u_i,v_\ell\} \partial_{v_\ell}. \label{eq:derpartialv}
\end{align}
Plugging~\eqref{eq:derpartialv} in~\eqref{eq:derthetared}, and using complete skew-symmetry, we obtain the statement.
\end{proof}

\section{Improving estimates through reduction}\label{sec:closed-geodesics}

We sketch a strategy to improve the lower bound for the (sub-)Lapla\-cian of Proposition~\ref{p:lowerbound-intro}, when the latter is trivial, i.e. when $L=+\infty$. Similar considerations hold also for isoperimetric-type inequalities. 

Let $M$ be a (sub-)Riemannian manifold with boundary $\partial M$, and assume that a reduced bundle $U^*M^\mathsf{r}$ has been found in such a way that the reduced Santal\'o formula, and all its consequences, hold.  Assume that there exists a reduced geodesic that never hits the boundary of $M$. This happens if there exists a covector $\lambda \in U^*M^\mathsf{r}$ such that $L(\lambda) = +\infty$. This phenomenon occurs already in the \emph{Riemannian} case, where no reduction is required, and in particular if $M$ contains closed Riemannian geodesics. For example, consider the small rotationally symmetric neighborhood
\[
M = \{(\theta,\phi) \mid \pi/2- \varepsilon \leq \theta \leq \pi/2 + \varepsilon\} \subset \mathbb{S}^{2},
\]
of the equator $\theta = \pi/2$ of the two dimensional round sphere, equipped with the standard measure. In this cases, the lower bound of Proposition~\ref{p:lowerbound-intro} for the Dirichlet spectrum on $M$ is trivial, since $L = \sup_{\lambda \in U^*M} L(\lambda) = +\infty$. Nevertheless the reduction procedure can be still applied to circumvent this problem, as we now sketch for the spherical band above.

The idea is to define a set of reduced geodesics by considering only those which are normal to the boundary $\partial M$. More precisely, we let
\begin{equation}
\distr := \spn\{ \partial_\theta\}, \qquad \ver := \spn \{ \partial_\phi \},
\end{equation}
and we set $T^* M^\mathsf{r} = \ver^\perp = \spn\{d \theta \}$. The restriction of $g$ to $\distr$ induces a sub-Riemannian structure (which does not satisfy the bracket-generating condition, but this is inconsequential here). The Dirichlet energy of $(M,\distr,g)$ is not greater than the one of the original Riemannian structure. Hence a lower bound for the first eigenvalue of the sub-Laplacian of $(M,\distr,g|_{\distr})$ yields a lower bound for the Laplace-Beltrami operator on $M$. 

Geodesics with $\lambda \in U^*M^\mathsf{r}$ cross the spherical band longitudinally, and $L(\lambda) = 2\varepsilon$. Both $(\mathbf{H1})$ and $(\mathbf{H2})$ are verified, and thus we obtain from Proposition~\ref{p:lowerbound-intro} the sharp estimate
\begin{equation}
\lambda_1(M) \geq \frac{\pi^2}{(2\varepsilon)^2}.
\end{equation}

This construction highlights the fact that the reduction procedure can be used in both the Riemannian and sub-Riemannian case to improve estimates such as the one of Proposition~\ref{p:lowerbound-intro}, when the geometry of the problem is quite explicit. The general philosophy is that the smaller is the set of reduced geodesics, the better is the bound in Proposition~\ref{p:lowerbound-intro}.

\section*{Acknowledgments} 
We are grateful to the organizers of the Trimester \href{http://www.cmap.polytechnique.fr/subriemannian/}{\emph{Geometry, Analysis and Dynamics on Sub-Riemannian Manifolds}}, whose stimulating atmosphere fostered this collaboration, and the Institut Henri Poincar\'e, Paris, where most of this research has been carried out. We also thank the Isaac Newton Institute for Mathematical Sciences, Cambridge, for the support and  hospitality during the programme \href{http://www.newton.ac.uk/event/pep}{\emph{Periodic and Ergodic Spectral Problems}}. The last named author is grateful for the support and hospitality of the Erwin Schr\"odinger Institute of Mathematical Physics in Vienna during the programme \href{http://www.esi.ac.at/activities/events/2015/modern-theory-of-wave-equations}{\emph{Modern theory of wave equations}}, where he stayed when this article was completed. We thank F. Baudoin and J. Wang for useful discussions about the spectrum of the sub-Laplacian on the quaternionic Hopf fibration and L. Parnovski for useful comments on the spectral theoretic part. We thank Z. Balogh for useful discussions on the negligibility of the set of characteristic points.

\bigskip

This research has  been supported by the European Research Council, ERC StG 2009 ``GeCoMethods'', contract n. 239748, by the iCODE institute (research project of the Idex Paris-Saclay), and by the SMAI project ``BOUM''. The first and second authors were partially supported by the Grant ANR-15-CE40-0018 of the ANR. This research benefited from the support of the ``FMJH Program Gaspard Monge in optimization and operation research'' and from the support to this program from EDF.

\bibliographystyle{abbrv-JDG}
\bibliography{sant-biblio}

\end{document}